\documentclass{siamart116}

\usepackage[utf8]{inputenc}
\usepackage{amsmath,amssymb,enumerate,framed, enumitem}
\usepackage{color,colortab,epsfig}
\usepackage{tikz, float}
\RequirePackage{fix-cm}

\newcommand{\N}{\mathbb{N}}
\newcommand{\Z}{\mathbb{Z}}
\newcommand{\R}{\mathbb{R}}
\newcommand{\conv}{\operatorname{conv}}

\newcommand{\setcond}[2]{\left\{ #1 \,:\, #2 \right\}}

\newcommand{\intr}{\operatorname{int}}
\newcommand{\relintr}{\operatorname{relint}}

\newcommand{\pyr}{\operatorname{Pyr}}
\newcommand{\proj}{\operatorname{proj}}
\newcommand{\col}{\operatorname{col}}

\newcommand{\cone}{\operatorname{cone}}

\newcommand{\cK}{\mathcal{K}}
\newcommand{\cR}{\mathcal{R}}
\newcommand{\cP}{\mathcal{P}}
\newcommand{\cL}{\mathcal{L}}

\newcommand{\cF}{\mathcal{F}}
\newcommand{\cX}{\mathcal{X}}

\DeclareMathOperator*{\argmax}{arg\,max}

\renewcommand{\epsilon}{\varepsilon}


\def\ve#1{\mathchoice{\mbox{\boldmath$\displaystyle\bf#1$}}
{\mbox{\boldmath$\textstyle\bf#1$}}
{\mbox{\boldmath$\scriptstyle\bf#1$}}
{\mbox{\boldmath$\scriptscriptstyle\bf#1$}}}

\newcommand{\0}{{0}}

\newcommand{\p}{{\ve p}}

\newtheorem{CLAIM}[theorem]{Claim}
\newenvironment{cpf}{\begin{trivlist} \item[] {\em Proof of Claim.}}{\hspace*{\stretch{1}} $\diamond$ \end{trivlist}}

\numberwithin{equation}{section}

\ifpdf
  \DeclareGraphicsExtensions{.eps,.pdf,.png,.jpg}
\else
  \DeclareGraphicsExtensions{.eps}
\fi

\newcommand{\TheTitle}{Non-unique lifting of integer variables in minimal inequalities} 
\newcommand{\TheAuthors}{Amitabh Basu and Santanu S. Dey and Joseph Paat}

\headers{\TheTitle}{\TheAuthors}

\title{{\TheTitle}\thanks{Submitted to the editors 16 February 2017.
\funding{Amitabh Basu and Joseph Paat were supported in part by the NSF grant CMMI1452820. Santanu S. Dey gratefully acknowledges the support from NSF grant CMMI 1149400.}}}

\author{
  Amitabh Basu \thanks{Dept. of Applied Mathematics and Statistics, The Johns Hopkins University, (\email{basu.amitabh@jhu.edu}).}
  \and
  Santanu S. Dey\thanks{School of Industrial and Systems Engineering, Georgia Institute of Technology, (\email{santanu.dey@isye.gatech.edu}).}
  \and
  Joseph Paat\thanks{Institute for Operations Research, ETH Z\"urich, (\email{joseph.paat@ifor.math.ethz.ch}).}
  }

\usepackage{amsopn}

\begin{document}

\maketitle

\begin{abstract}
We explore the lifting question in the context of cut-generating functions. 
Most of the prior literature on this question focuses on cut-generating functions that have the unique lifting property. 
We develop a general theory for understanding the lifting question for cut-generating functions that do not necessarily have the unique lifting property.
\end{abstract}

\begin{keywords}
  cutting plane theory, cut-generating functions, lattice-free sets
\end{keywords}

\begin{AMS}
90C10, 90C11
\end{AMS}

\section{Introduction}\label{sec:intro}
Let $S\subseteq \R^n \setminus\{0\}$ be a closed set and consider the model
\begin{equation}\label{def mixed-int set}
	X_{S}(R,P) := \setcond{(s,y) \in \R_+^k \times \Z_+^\ell }{ Rs + Py\in S },
\end{equation}
where $k, \ell \in \Z_+$, $R \in \R^{n \times k}$, and $P \in \R^{n \times \ell}$. We allow $k=0$ or $\ell=0$, but not both.
The assumption that $S$ is closed and $0 \not\in S$ implies that $(0,0) \not\in \conv(X_S(R,P))$~\cite[Lemma 2.1]{conforti2014cut}. We search for valid inequalities that separate $(0,0)$ from $X_S(R,P)$.

A \emph{cut-generating (function) pair $(\psi, \pi)$ for $S$} is a pair of functions $\psi, \pi\colon\R^n \to \R$ such that for every $k, \ell \in \Z_+$, $R = (r^1, \dots, r^k) \in \R^{n \times k}$, and $P = (p^1, \dots, p^{\ell}) \in \R^{n \times \ell}$, the inequality
\begin{equation}\label{psi pi ineq-1}
	\sum_{i=1}^k\psi(r^i)s_i + \sum_{j=1}^{\ell}\pi(p^j)y_j \ge 1
\end{equation}
is satisfied by all points $(s,y)\in \conv(X_S(R,P))$.
Note that $(0,0)\in \R^k\times \Z^{\ell}$ does not satisfy~\eqref{psi pi ineq-1}, so the inequality separates $(0,0)$ from $\conv(X_S(R,P))$.
Sometimes we refer to cut-generating pairs as {\em valid cut-generating pairs} or {\em valid pairs} to emphasize that they give valid inequalities of the form~\eqref{psi pi ineq-1}; inequality~\eqref{psi pi ineq-1} is known as a {\em cutting plane} or a {\em cut}. 
The literature studying model~\eqref{def mixed-int set} and cut-generating pairs is extensive. 
We refer the reader to the surveys~\cite{Richard-Dey-2010:50-year-survey,corner_survey,basu2015geometric,basu2016light,basu2016light2} and Chapter 6 of~\cite{conforti2014integer}, and the references within, for an overview of the field.

There is a natural partial order on the set of valid pairs, namely $(\psi', \pi') \leq (\psi, \pi)$ if and only if $\psi' \leq \psi$ and $\pi' \leq \pi$. Since each point $(s,y)\in X_S(R,P)$ is nonnegative, the relation $(\psi', \pi') \leq (\psi, \pi)$ indicates that all cuts obtained from $(\psi, \pi)$ are implied by those obtained from $(\psi', \pi')$.
The minimal elements under this partial order are called {\em minimal valid pairs.} 

The connection between $S$-free sets and cut-generating functions has been instrumental in making cut-generating functions a computational tool for mixed-integer optimization. 
A set $B \subseteq \R^n$ is called a {\em convex $0$-neighborhood} if $B$ is convex and $0 \in \intr(B)$.
If $B$ is a convex $0$-neighborhood and $S \cap \intr(B) = \emptyset$, then $B$ is called an {\em $S$-free convex $0$-neighborhood}. 
If there does not exist a strict superset of $B$ that is also an $S$-free convex $0$-neighborhood, then $B$ is called a {\em maximal} $S$-free convex $0$-neighborhood.
A sublinear\footnote{A function is sublinear if it is convex and subadditive.} function $\gamma :\R^n \to \R$ is called a {\em representation of $B$} if $B = \{r \in \R^n: \gamma(r) \leq 1\}$. 
A convex $0$-neighborhood may have several representations, with the classic {\em gauge} function being one such representation. 
Representations of closed convex $0$-neighborhoods was the main topic of study in~\cite{basu2011sublinear,conforti2013cut}, where it was established that there always exists a {\em smallest} representation $\gamma^*$ for a convex $0$-neighborhood $B$, i.e., $\gamma^* \leq \gamma$ for all representations $\gamma$ of $B$. 

\smallskip

The following recipe provides one way of creating a cut-generating pair:

\smallskip

\begin{tabular}{@{}rl}
1. & Fix a maximal $S$-free convex $0$-neighborhood $B$.\\
2. & Let $\gamma^*$ be the smallest representation of $B$.\\
3. & The pair $(\psi , \pi) = ( \gamma^*, \gamma^*)$ is a cut-generating pair.
\end{tabular}

\smallskip

\noindent 
Unfortunately, this recipe falls short of creating a minimal cut-generating pair because the pair $(\psi , \pi) = ( \gamma^*, \gamma^*)$ is only ``partially minimal''. 
Indeed, one can show that for any other cut-generating pair $(\psi',\pi') \leq (\psi,\pi)$, one must have $\psi'=\psi$. 
However, there may exist another function $\pi' \leq \pi$ such that $(\psi, \pi')$ is also a valid pair. 
This motivates the following definition.
Let $B$ be a maximal $S$-free convex $0$-neighborhood and let $\psi$ be the smallest representation of $B$. 
Then $\pi : \R^n \to \R$ is a {\em lifting} of $\psi$ if $(\psi, \pi)$ is a valid cut-generating pair.
Note that $\psi$ is a lifting of itself. 
The set of all liftings of $\psi$ is partially ordered by pointwise dominance, so one can define {\em minimal liftings}.

The lifting approach to create cut-generating pairs is useful because for some structured sets $S$, the smallest representations of maximal $S$-free convex $0$- neighborhoods have nice, easy-to-compute ``formulas''. Moreover, for some classes of maximal $S$-free convex $0$-neighborhoods, nice ``formulas'' exist for minimal liftings of the smallest representation. For a survey of these ideas, see~\cite{basu2015geometric} and Section 6.3.4 in~\cite{conforti2014integer}.

We say that a function $\psi:\R^n\to \R$ is a {\em valid function for $S$} if $(\psi,\psi)$ is a valid cut-generating pair for $S$. 
The recipe above depends on the observation that the smallest representation of any $S$-free convex $0$-neighborhood (not necessarily maximal) is a valid function for $S$~\cite[Theorem 4.12]{basu2015geometric}. 
However, not all valid functions of $S$ are representations of $S$-free convex $0$-neighborhoods. The notion of a lifting of $\psi$ can be easily extended to any valid function $\psi$ for $S$: $\pi$ is a lifting of $\psi$ if $(\psi, \pi)$ forms a cut-generating pair for $S$. Under pointwise dominance, minimal elements of the set of liftings of a valid function $\psi$ for $S$ will be called minimal liftings of $\psi$.

\subsection{Unique minimal liftings} 
Let $B\subseteq \R^n$ be a maximal $S$-free convex $0$-neighborhood.
A central notion in the study of minimal liftings of the smallest representation $\psi$ of $B$ is the {\em extended lifting region} $R(B)$ defined to be
\begin{equation}\label{eq:lifting-region} R(B) := \{r \in \R^n \colon \pi_1(r) = \pi_2(r) \textrm{ for all minimal liftings }\pi_1, \pi_2 \textrm{ of }\psi\}.\end{equation}

If $R(B) = \R^n$, then $\psi$ has a unique minimal lifting. 
Moreover, nice ``formulas'' for this unique lifting can be derived in terms of $\psi$; see Section 6 of the survey~\cite{basu2015geometric}. 
A large class of maximal $S$-free convex $0$-neighborhoods with this \emph{unique lifting} property has been identified and studied in many recent papers on minimal liftings~\cite{averkov-basu-lifting,bcccz,bck,basu-paat-lifting,dw2008}.
However, the same literature shows that there are many choices for $B$ that satisfy $R(B) \subsetneq \R^n$. 
\textbf{The purpose of this manuscript is to describe minimal valid pairs that arise from such maximal $S$-free convex $0$-neighborhoods, that is, from maximal $S$-free convex $0$-neighborhoods without the unique lifting property.}

Let $p^*\in \R^n$ and assume that $p^* \not\in R(B)$.
This means that there exist two minimal liftings of $\psi$ that disagree on $p^*$. 
When considering a model $X_S(R,P)$ in which $p^*$ is a column of $P$, one would like to develop cuts that have a small coefficient $\pi(p^*)$. To this end, it is of interest to examine the smallest possible value that any minimal lifting of $\psi$ can achieve at $p^*$, which is denoted by
\begin{equation}\label{eqVpsip}
V_{\psi}(p^*):= \inf\{\pi(p^*) : \pi \textrm{ minimal lifting of }\psi\}.
\end{equation}
We aim to find a minimal lifting in the collection 
\begin{equation}\label{def:L-psi} 
\mathcal{L}_{\psi,p^*} := \{\pi:\R^n \to \R: \pi \text{ is a minimal lifting of }\psi \text{ and } \pi(p^*) = V_\psi(p^*)\}. 
\end{equation}
For the setting when $n = 2$ and $S = \Z^2$, Dey and Wolsey~\cite{dw2008} studied $V_{\psi}(p^*)$  and showed that $\mathcal{L}_{\psi,p^*}$ is nonempty.
In general, $\mathcal{L}_{\psi,p^*}$ is nonempty, and we show this in Proposition~\ref{prop:fill-in} in Appendix~\ref{appendix:useful-obs}.

By definition of $V_{\psi}(p^*)$ and the extended lifting region, all $\pi\in \cL_{\psi, p^*}$ agree on $\{p^*\} \cup R(B)$. 
Are there more values on which these liftings agree? 
Analogous to the extended lifting region, we define the \textit{fixing region $\mathcal{F}_{\psi,p^*}$ corresponding to $p^*$} to be the set of points on which all minimal liftings in $\cL_{\psi, p^*}$ agree, that is
\begin{equation}\label{eqFixRegion}
\cF_{\psi,p^*}:= \{p \in \R^n : \pi_1(p) = \pi_2(p) \textrm{ for all }\pi_1, \pi_2 \in \mathcal{L}_{\psi,p^*}\}.
\end{equation}

If $\cF_{\psi,p^*}  = \R^n$, then there exists a unique lifting in $\mathcal{L}_{\psi,p^*}$.
In other words, after finding the optimal lifting coefficient $V_\psi(p^*)$ for $p^*$, the lifting coefficients for all other vectors are uniquely determined for all minimal liftings that assign $V_\psi(p^*)$ to the vector $p^*$. 
If there exists a $p^*$ such that $\cF_{\psi, p^*} = \R^n$, then we say that $\psi$ and the underlying set $B$ are {\em one point fixable}. 

Using the fixing region, the recipe provided above can be modified to create minimal cut-generating pairs.

\begin{tabular}{@{}rl}
1. & Fix a maximal $S$-free convex $0$-neighborhood $B$ that is one point fixable.\\
2. & Let $\psi$ be the smallest representation of $B$.\\
3. & Find $p^*\in \R^n$ such that $\mathcal{F}_{\psi,p^*}  = \R^n$.\\
4. & Then $\mathcal{L}_{\psi,p^*} = \{\pi\}$ and the pair $(\psi , \pi)$ is a minimal cut-generating pair.
\end{tabular}

\smallskip

In this paper, we study the structure of the fixing region and one-point fixability.
What is a good description of the fixing region? 
How does the fixing region depend on $p^*$? 
How much does the fixing region cover? 
We explore questions such as these.
Our work is \textcolor{black}{motivated} by Section 7 of~\cite{dw2008}, which initiated the study of this problem. 

\subsection{Statement of results}
To state our results, we need the set
\begin{equation*}
W_S := \{w\in \R^n : s+\lambda w\in S~, \forall s\in S, \forall \lambda\in \Z\}.
\end{equation*}
The importance of $W_S$ is that any minimal lifting $\pi$ of a valid function $\psi$ satisfies $\pi(r+w) = \pi(r)$ for all $r\in \R^n$ and $w\in W_S$ (see Proposition~\ref{prop:easy-partial}).

Let $B \subseteq \R^n$ be a maximal $S$-free convex $0$-neighborhood and let $\psi$ be the corresponding smallest representation.

\smallskip

\begin{enumerate}[leftmargin=*]
\item Let $p^*\in \R^n$.
In Theorem~\ref{thm:lifting-fixing}, we use the structure of $B$ to identify a nonempty set $\cX(B, p^*)\subseteq \R^n$ such that $R(B)\subsetneq \cX(B, p^*) + W_S \subseteq  \cF_{\psi,p^*}$.
It is not known if this inner approximation of the fixing region is always equal to $\cF_{\psi,p^*}$.

\smallskip

\item
In Proposition~\ref{prop:covering_prop_type3}, we use the inner approximation in Theorem~\ref{thm:lifting-fixing} to show that certain Type 3 triangles are one point fixable. 
As a corollary, in Proposition~\ref{prop: mix_pyramid_lattice_free} we show that Type 3 triangles resulting from the so-called mixing set are one point fixable. This also follows from~\cite[Theorem 5]{dw2008}; we use different, more geometric techniques. See~\cite{dw2,mixingSet} for more on the mixing set.

\smallskip

\item Theorem~\ref{thm:fixing_covering_trans} says if our inner approximation $\cX(B, p^*)+W_S$ of $\cF_{\psi,p^*}$ equals $\R^n$ (implying that $B$ is one point fixable), then the $(S+t)$-free convex $0$-neighborhood $B+t$ is one point fixable for any $t\in \R^n$ such that $B+t$ is a $0$-neighborhood. 
In other words, one point fixability is preserved under translations. 
If an $S$-free $0$-neighborhood is used to derive cuts around a basic feasible solution of a mixed-integer linear program, then, by using this translation invariance, these cuts can be transformed to cuts around a different basic feasible solution.
A more detailed discussion of this point is provided in~\cite{basu-paat-lifting} and~\cite{basu2015geometric}.
Theorem~\ref{thm:fixing_covering_trans} is in Subsection~\ref{subsecTranslation}.

\smallskip

\item In Section~\ref{ssecPcgf}, we develop a theory of {\em partial cut-generating pairs}, which are cut-generating pairs that are only defined on subsets of $\R^n$.
Partial cut-generating pairs, which were first developed in this paper, have been subsequently used in~\cite{BCDSPInfinite2017} to prove structural results about the infinite models in integer programming. One way to think of the results in Section~\ref{ssecPcgf} is that they are analogous to classic ``lifting" results like Hahn-Banach theorems in analysis~\cite{conway2013course}, and ``lifting" valid inequalities from faces of a polytope to the full polytope (see, e.g., Section 7.2 in~\cite{conforti2014integer}).



\end{enumerate}


\section{Partial cut-generating functions}\label{ssecPcgf}

We denote the columns of a matrix $A$ by $\col(A)$.
For a set $X$ and any $d \in \N$, $X^d$ will denote the $d$-wise Cartesian product of $X$ with itself.
 Let $\mathcal{R}, \mathcal{P} \subseteq \R^n$, $\psi: \cR \to \R$, and $\pi:\cP \to \R$. 
We define $(\psi, \pi)$ to be a \emph{valid pair for $(S,\mathcal{R}, \mathcal{P})$} if for every $k, \ell \in \Z_+$, $R \in \cR^k$, and $P \in \cP^{\ell}$,
the inequality
\begin{equation}
	\label{psi pi ineq}
	\sum_{r\in \col(R)}\psi(r)s_r + \sum_{p\in \col(P)}\pi(p)y_p \ge 1
\end{equation}
is satisfied by all points $(s,y) \in X_S(R,P)$. 
Here, $s_r$ denotes the continuous variable associated with $r \in \col(R)$ and $y_p$ denotes the integer variable associated with $p\in \col(P)$. 
The concepts of a valid function $\psi:\cR \to \R$ for $(S, \cR)$ and a minimal valid pair for $(S, \cR, \cP)$ are defined analogously to the case $\cR = \R^n$ and $\cP = \R^n$. 
For $\cP \subseteq \R^n$, we say $\pi: \cP \to \R$ is a \emph{lifting of a valid function $\psi$ for $(S, \cR)$}, if $(\psi, \pi)$ is a valid pair for $(S,\mathcal{R}, \mathcal{P})$. 
The concept of a minimal lifting of $\psi$ is analogously defined. 
When $\cR$ and $\cP$ are strict subsets of $\R^n$, we refer to $\psi$ as a \textit{partial cut-generating function} and $(\psi, \pi)$ as a \textit{partial cut-generating pair}. 

Using this terminology, valid pairs for $S$ defined in Section~\ref{sec:intro} become valid pairs for $(S, \R^n, \R^n)$ and valid functions for $S$ become valid functions for $(S, \R^n)$. 
In the remaining text, we will be careful about explicitly stating $\mathcal{R}$ and $\mathcal{P}$ whenever we speak about valid functions or pairs.

Minimal cut-generating pairs for $(S, \R^n, \R^n)$ satisfy certain structural properties. 
The next proposition shows that some of these results also hold for partial cut-generating pairs, and setting $\cR = \cP = \R^n$ recovers the setting of cut-generating pairs.
Similarly to the translation set $W_S$ for classic cut-generating pairs, define
\begin{equation*}
W^+_S := \{w\in \R^n : s+\lambda w\in S~, \forall s\in S, \forall \lambda\in \Z_+\}
\end{equation*}
for partial cut-generating pairs. 
Note that $W_S = W^+_S \cap (-W^+_S)$.

\begin{proposition}\label{prop:easy-partial}
Let $S \subseteq \R^n\setminus\{0\}$ be a closed set. 
Let $\mathcal{R}, \mathcal{P} \subseteq \R^n$ and $\psi: \cR \to \R$  be a valid function for $(S, \cR)$. 
\begin{itemize}[leftmargin=*]
\item[(a)] For any minimal lifting $\pi$ of $\psi$, $\pi(p) \leq \pi(p+w)$ for all $p \in \cP$ and $w \in W^+_S$ such that $p + w \in \cP$. 
So, $\pi(p) = \pi(p+w)$ for all $p \in \cP$ and $w \in {W}_S$ such that $p+w\in \cP$.
\item[(b)] Define $\psi^*: \cR \to \R$ to be
\[
\psi^*(r) := \inf\{\psi(r+w): w \in W^+_S \textrm{ such that } r+w \in \cR\}.
\]
Then $(\psi, \psi^*)$ is a valid partial cut-generating pair for $(S, \cR, \cR)$.
\item[(c)] If $\cR = \cP$, then every minimal lifting $\pi$ of $\psi$ satisfies $\pi \leq \psi^*$.
\end{itemize}

\end{proposition}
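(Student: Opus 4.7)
The plan is to prove all three parts via a unified ``substitution'' technique. Given a hypothesized valid or minimal object, I modify $\pi$ (or define $\psi^*$) at one column type, then for each test problem $(R,P)$ I build modified matrices $(\hat{R},\hat{P})$ so that any feasible $(s,y)\in X_S(R,P)$ transports to a feasible point of $X_S(\hat{R},\hat{P})$, and invoke validity of a known pair on the modified system. The recurring algebraic ingredient is that if $w_1,\dots,w_m\in W^+_S$, $y_1,\dots,y_m\in \Z_+$, and $s\in S$, then $s+\sum_i y_i w_i\in S$, obtained by iterating the definition of $W^+_S$.

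For (a), I argue by contradiction. Suppose $\pi$ is a minimal lifting with $\pi(p)>\pi(p+w)$ for some $p\in\cP$ and $w\in W^+_S$ with $p+w\in\cP$, and define $\tilde{\pi}$ to agree with $\pi$ everywhere except at $p$, where $\tilde{\pi}(p):=\pi(p+w)$. For any test matrices $R,P$ with columns in $\cR,\cP$, let $\hat{P}$ be $P$ with every column equal to $p$ replaced by $p+w$; these replacement columns still lie in $\cP$ by hypothesis. Writing $Y:=\sum_{j:P_j=p}y_j \in \Z_+$, for any $(s,y)\in X_S(R,P)$ the identity $Rs+\hat{P}y = Rs+Py + Yw$ combined with $w\in W^+_S$ places $(s,y)\in X_S(R,\hat{P})$. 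Validity of $(\psi,\pi)$ on $X_S(R,\hat{P})$ is precisely the cut inequality for $(\psi,\tilde{\pi})$ on $X_S(R,P)$, contradicting minimality of $\pi$. The equality for $w\in W_S$ follows by applying the inequality to both $w$ and $-w$.

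For (b), I fix matrices $R,P$ with columns in $\cR$, a point $(s,y)\in X_S(R,P)$, and $\epsilon>0$. For each column $p_j$ of $P$ I invoke the infimum definition of $\psi^*$ to pick $w_j\in W^+_S$ with $p_j+w_j\in \cR$ and $\psi(p_j+w_j)\le \psi^*(p_j)+\epsilon$, and assemble $\hat{P}$ from the shifted columns. The algebraic ingredient places $(s,y)\in X_S(R,\hat{P})$, and validity of the pair $(\psi,\psi)$ (which holds by hypothesis since $\psi$ is valid for $S,\cR$) on the modified system gives $\sum \psi(r_i)s_i + \sum \psi(p_j+w_j)y_j \ge 1$. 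Substituting $\psi(p_j+w_j)\le \psi^*(p_j)+\epsilon$ yields $\sum \psi(r_i)s_i + \sum \psi^*(p_j)y_j \ge 1 - \epsilon\sum_j y_j$, and letting $\epsilon\to 0$ (with $\sum_j y_j$ a fixed finite number) produces the desired inequality.

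For (c), I first establish the intermediate claim $\pi\le \psi$ on $\cP=\cR$ by a second substitution that relaxes integer columns to continuous ones. If $\pi(p_0)>\psi(p_0)$ for some $p_0$, define $\tilde{\pi}$ by $\tilde{\pi}(p_0):=\psi(p_0)$ and $\tilde{\pi}=\pi$ elsewhere. Given $R,P$ with columns in $\cR=\cP$, set $J:=\{j:P_j=p_0\}$, let $\hat{R}$ be $R$ with $|J|$ extra copies of the column $p_0$ appended (admissible precisely because $\cR=\cP$ forces $p_0\in\cR$), and let $\hat{P}$ be $P$ with the columns in $J$ deleted. Since any integer value of $y_j$ is also a valid nonnegative real value, the map $(s,y)\mapsto \bigl((s,(y_j)_{j\in J}),(y_j)_{j\notin J}\bigr)$ sends $X_S(R,P)$ into $X_S(\hat{R},\hat{P})$, and validity of $(\psi,\pi)$ on the new system is precisely the cut inequality for $(\psi,\tilde{\pi})$ on the original, contradicting minimality. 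Combining $\pi\le\psi$ with (a) gives $\pi(r)\le\pi(r+w)\le\psi(r+w)$ for every $w\in W^+_S$ with $r+w\in\cR$, so $\pi(r)\le \inf_w\psi(r+w)=\psi^*(r)$. The main technical obstacle I anticipate is the bookkeeping in (c) around the integer-to-continuous relaxation and verifying that $\hat{R}$'s new columns indeed lie in $\cR$; this is exactly where the hypothesis $\cR=\cP$ is indispensable.
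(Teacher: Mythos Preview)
Your proof is correct and uses essentially the same column-substitution idea as the paper. The only organizational difference is that the paper packages the argument into one lemma—showing that for \emph{any} lifting $\pi$ the function $\pi^*(p):=\inf\{\pi(p+w):w\in W^+_S,\ p+w\in\cP\}$ is again a lifting—and then reads off (a), (b), (c) as immediate corollaries (with $\pi\le\psi$ in (c) stated as a known fact rather than reproved via your integer-to-continuous relabeling).
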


\begin{proof}
Let $\cK \subseteq \R^n$ and take $\sigma :\cK \to \R$ to be a (not necessarily minimal) lifting of $\psi$.
Thus, $(\psi, \sigma)$ is a valid pair for $(S, \cR, \cK)$.
Define $\sigma^*:\cK \to \R$ to be 
\[
\sigma^*(p) := \inf_{w\in W^+_S}\bigg\{\sigma(p+w): ~ p+w \in \cK \bigg\}.
\]
First, we show that $(\psi, \sigma^*)$ is a valid pair for $(S, \cR, \cK)$.

Let $k,\ell\in \Z_+$, $R \in \cR^k$, $P \in \cK^{\ell}$, and $(s,y) \in X_S(R,P)$. 
Let $W \in \R^{n \times \ell}$ be any matrix with $\col(W)\subseteq W^+_S$ such that $P+W\in \cK^{\ell}$. 
Let $(\bar s, \bar y) \in \R^k_+\times \Z^{\ell}_+$ be constructed as follows: $\bar s_r = s_r$ for each $r\in \col(R)$ and $\bar{y}_{p+w} = y_{p}$ for each $p+w \in \col(P+W)$. 
Since $\bar w \in W^+_S$ by definition of $W$, it follows that $R\bar s + (P+W)\bar y = Rs + Py + \bar w\in S$. 
Thus, since $(\psi, \sigma)$ is a valid pair for $(S, \cR, \cK)$,
$$\sum_{r\in \col(R)} \psi(r)\overline{s}_{r} \quad + \sum_{p+w\in \col(P+W)}\sigma(p+w)\overline{y}_{p+w} \quad \geq 1.$$ 
The above holds for all matrices $W \in \R^{n \times \ell}$ whose columns are in $W^+_S$ and $P+W\in \cK^\ell$. 
Taking an infimum over all such $W$ gives 
\begin{align*}
& \sum_{r\in \col(R)}\psi(r)s_{r} + \sum_{p\in \col(P)}\sigma^*(p)y_{p}\\
=& \sum_{r\in \col(R)}\psi(r)s_{r} + \inf_{W}\bigg\{ \sum_{p+w\in \col(P+W)}\sigma(p+w)y_{p}\bigg\}\\
=&  \inf_{W}\bigg\{\sum_{r\in \col(R)}\psi(r)s_{r}  \quad + \sum_{p+w\in \col(P+W)}\sigma(p+w)\overline{y}_{p+w} \bigg\}\\
\geq& 1.
\end{align*}
Thus, $(\psi, \sigma^*)$ is also a valid pair for $(S, \cR, \cK)$.
Setting $\sigma = \psi$ and $\cK = \cR$ gives $(b)$.

Let $\pi$ be a minimal lifting of $\psi$.
Set $\sigma = \pi$ and $\cK = \cP$.
Since $\pi^* \leq \pi$ and $\pi$ is minimal, we obtain $\pi^* = \pi$.
Hence, $\pi(p) = \pi^*(p) \leq \pi(p+w)$ for all $p \in \cP$ and $w \in W^+_S$ such that $p+w\in \cP$.
This proves $(a)$.

Finally, assume that $\cP = \cR $.
Since $\pi$ is a minimal lifting of $\psi$, $\pi(r) \leq \psi(r)$ for all $r \in \cR$.
By $(a)$, $\pi(p) \leq \pi(p+w) \leq \psi(p+w)$ for all $p \in \cP$ and $w \in W^+_S$ such that $p+w\in \cP = \cR$.
Taking an infimum over all such $w \in W^+_S$, we obtain $(c)$.
\end{proof}

Theorem~\ref{thm:subadditive} follows from standard calculations involving cut-generating functions, so the proof is omitted.


\begin{theorem}\label{thm:subadditive} 
Let $(\psi, \pi)$ be a minimal valid pair for $(S, \cR, \cP)$.  
Then $\psi$ and $\pi$ are both subadditive over $\cR$ and $\cP$, respectively, i.e., $\psi(r + r') \leq \psi(r) + \psi(r')$ for all $r, r'\in \cR$ such that $r + r' \in \cR$, and $\pi(p + p') \leq \pi(p) + \pi(p')$ for all $p, p'\in \cP$ such that $p + p' \in \cP$. 
Also, $\psi$ is positively homogeneous over $\cR$, i.e., for all $r \in \cR$ and $\lambda >0$ such that $\lambda r \in \cR$, we have $\psi(\lambda r) = \lambda\psi(r)$.
\end{theorem}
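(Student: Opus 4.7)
The plan is to prove both subadditivity and positive homogeneity by standard contradiction arguments exploiting the minimality of $(\psi, \pi)$. In each case, I would construct a modification $(\psi', \pi)$ or $(\psi, \pi')$ that differs from the original at a single point, dominates it pointwise, and is still a valid pair — contradicting minimality. The key tool throughout is a ``split-and-recombine'' operation on the columns of $R$ (for $\psi$) or $P$ (for $\pi$) that preserves membership in $X_S(R,P)$.

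For subadditivity of $\psi$: suppose $r_1, r_2, r_1+r_2 \in \cR$ and $\psi(r_1+r_2) > \psi(r_1) + \psi(r_2)$. Define $\psi' : \cR \to \R$ to agree with $\psi$ everywhere except at $r_1+r_2$, where $\psi'(r_1+r_2) := \psi(r_1) + \psi(r_2)$. To verify $(\psi', \pi)$ is valid for $S, \cR, \cP$, take any $R, P$ and $(s,y) \in X_S(R,P)$. If no column of $R$ equals $r_1+r_2$, the two inequalities coincide. Otherwise, for each such column with multiplier $s^*$, replace it with two columns $r_1$ and $r_2$, each with multiplier $s^*$; this preserves $Rs + Py$, so the new data lies in $X_S(R', P)$. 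Applying validity of $(\psi, \pi)$ to the new data and regrouping yields $\sum \psi'(r) s_r + \sum \pi(p) y_p \geq 1$, contradicting minimality. The argument for subadditivity of $\pi$ is essentially identical, with the crucial observation that an integer multiplier $y_{p_1+p_2}$ survives the split unchanged, so integrality of the new data is preserved.

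For positive homogeneity of $\psi$: let $r \in \cR$ and $\lambda > 0$ with $\lambda r \in \cR$. Suppose $\psi(\lambda r) < \lambda \psi(r)$. Define $\psi'$ to agree with $\psi$ except at $r$, where $\psi'(r) := \psi(\lambda r)/\lambda < \psi(r)$. Given $(s,y) \in X_S(R,P)$, replace each column equal to $r$ (with multiplier $s_r$) by the column $\lambda r$ with multiplier $s_r/\lambda \geq 0$; this preserves $Rs + Py$, and $\lambda r \in \cR$ by hypothesis. Applying validity of $(\psi, \pi)$ to the new data gives $(\psi', \pi)$ valid, contradicting minimality. Hence $\psi(\lambda r) \geq \lambda \psi(r)$. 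The reverse inequality follows by applying the same argument with the roles of $r$ and $\lambda r$ swapped (and scaling factor $1/\lambda$).

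The main bookkeeping subtlety — and essentially the only obstacle — is handling the case where columns introduced by the split-and-recombine step already appear among the columns of $R$ or $P$; here one simply sums the multipliers corresponding to identical columns, which is harmless because all coefficients are nonnegative. Note that this approach does not yield positive homogeneity of $\pi$: rescaling an integer multiplier $y_p$ by $1/\lambda$ generally destroys integrality, so the analog of the splitting operation is not available on the integer side. This explains why the theorem asserts homogeneity only for $\psi$.
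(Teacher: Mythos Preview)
Your proof is correct and is precisely the ``standard calculation'' the paper alludes to; the paper itself omits the proof entirely, citing~\cite{basu2015geometric}. Your remark explaining why the homogeneity argument fails for $\pi$ (loss of integrality under rescaling) is a nice touch that the paper does not include.
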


Given $\mathcal{R} \subseteq \mathcal{R'}\subseteq \R^n, \mathcal{P}\subseteq \mathcal{P}' \subseteq \R^n$, and a valid pair $(\psi, \pi)$ for $(S,\mathcal{R}, \mathcal{P})$, a natural question is that of extension: do there always exist functions $\psi', \pi'$ such that $(\psi', \pi')$ is valid for $(S, \cR', \cP')$ and $\psi', \pi'$ are extensions of $\psi,\pi$, i.e., they coincide on $\cR$ and $\cP$ respectively?
The answer to the question is `\emph{no}', in general. 
Indeed, choosing $\mathcal{R} = \emptyset$ and $\mathcal{P} = \R^n$, we obtain Gomory and Johnson's pure integer model, where the discontinuous valid functions $\pi$ cannot be appended to any $\psi$ to give a valid pair for the full mixed-integer model (see~\cite{dey1}). 
On the positive side, the next result gives a sufficient condition for when partial cut-generating pairs can be extended.

For a set $X \subseteq \R^n$, we use $\cone(X)$ to denote the convex cone generated by $X$.

\begin{theorem}\label{thm:extension}
Let $\mathcal{R} \subseteq \mathcal{R'}\subseteq \R^n, \mathcal{P}\subseteq \mathcal{P}' \subseteq \R^n$ and $(\psi, \pi)$ be a valid pair for $(S,\mathcal{R}, \mathcal{P})$. 
If $\cR', \cP' \subseteq \cone(\cR)$, then there exist functions $\psi':\cR'\to \R$, $\pi':\cP' \to \R$ such that $(\psi', \pi')$ is a minimal valid pair for $(S, \cR', \cP')$ and $(\psi', \pi') \le (\psi, \pi)$ on $\cR \times \cP$.
\end{theorem}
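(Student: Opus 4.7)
I would build $(\psi', \pi')$ in two stages: first construct explicit ``subadditive-conic extensions'' $\tilde\psi, \tilde\pi$ of $\psi, \pi$ to $\cR', \cP'$ that already form a valid partial pair, then apply Zorn's lemma to extract a minimal valid pair $(\psi', \pi') \le (\tilde\psi, \tilde\pi)$.

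For the extension, I use the hypothesis $\cR', \cP' \subseteq \cone(\cR)$: every vector in $\cR'$ (resp.\ $\cP'$) admits a nonnegative combination of elements of $\cR$. Set
\begin{equation*}
\tilde\psi(r') := \inf\Bigl\{\textstyle\sum_i \lambda_i \psi(r_i) \st r' = \sum_i \lambda_i r_i,\ r_i \in \cR,\ \lambda_i \ge 0\Bigr\},
\end{equation*}
and, allowing columns from $\cP$ to appear only with nonnegative integer multiplicities,
\begin{equation*}
\tilde\pi(p') := \inf\Bigl\{\textstyle\sum_i \lambda_i \psi(r_i) + \sum_j \mu_j \pi(p_j) \st p' = \sum_i \lambda_i r_i + \sum_j \mu_j p_j,\ r_i \in \cR,\ p_j \in \cP,\ \lambda_i \ge 0,\ \mu_j \in \Z_+\Bigr\}.
\end{equation*}
The trivial one-term decomposition already gives $\tilde\psi \le \psi$ on $\cR$ and $\tilde\pi \le \pi$ on $\cP$, so once a minimal $(\psi', \pi') \le (\tilde\psi, \tilde\pi)$ is produced, the domination claim follows.

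Validity of $(\tilde\psi, \tilde\pi)$ on $S, \cR', \cP'$ is the main computational step. Given any feasible $(s, y) \in X_S(R', P')$ with $R' \in (\cR')^k, P' \in (\cP')^\ell$ and any $\epsilon > 0$, pick $\epsilon$-optimal decompositions of every $r_j'$ and $p_j'$ and substitute them into $R's + P'y \in S$. The substituted coefficients on $\cR$-columns are nonnegative reals of the form $\lambda_{ij} s_j$ or $\alpha_{ij} y_j$ (continuous, which is legal), while on $\cP$-columns they are $\beta_{ij} y_j \in \Z_+$ (integer, legal since $\beta_{ij}, y_j \in \Z_+$); precisely this is why the restriction $\mu_j \in \Z_+$ on $\cP$-terms is imposed and none is needed on $\cR$-terms. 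The hypothesized validity of $(\psi, \pi)$ on $S, \cR, \cP$ now applies to the substituted tuple; after regrouping and letting $\epsilon \downarrow 0$ we get $\sum_j \tilde\psi(r_j') s_j + \sum_j \tilde\pi(p_j') y_j \ge 1$.

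For the minimality step I would invoke Zorn's lemma on the poset of valid partial pairs pointwise below $(\tilde\psi, \tilde\pi)$; pointwise infima of chains are again valid via the same $\epsilon$-selection trick. The main obstacle I anticipate is ensuring that the defining infima (and chain infima) are real-valued rather than $-\infty$: $\cR$ may harbor nonnegative relations $\sum c_i r_i = 0$ which could in principle be added repeatedly to a decomposition to drive its cost arbitrarily low. The intended fix is the standard observation that such a relation, combined with any genuine feasible tuple for $X_S(R, P)$ and validity of $(\psi, \pi)$, forces $\sum c_i \psi(r_i) \ge 0$ by a scaling/limit argument; this bounds all decomposition costs below and keeps $\tilde\psi, \tilde\pi$ finite. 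Once this boundedness is in hand, the minimality argument is routine.
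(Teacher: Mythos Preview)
Your approach is essentially the paper's: define the extensions as infima over finite conic decompositions from $\cR$ (allowing additional $\Z_+$-weighted terms from $\cP$ in the $\tilde\pi$ case), prove validity by choosing $\epsilon$-near-optimal decompositions and substituting them into a feasible tuple for $X_S$, then pass to a minimal pair by Zorn's lemma. The paper carries out exactly these three steps.

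The one point where you and the paper diverge is the handling of $v_\psi(r')=-\infty$ (or $v_\pi(p')=-\infty$). Your observation that every nonnegative relation $\sum c_i r_i=0$ has $\sum c_i\psi(r_i)\ge 0$ is correct (given a feasible tuple), but the sentence ``this bounds all decomposition costs below'' is where the gap sits: two decompositions of the same $r'$ differ by a relation that is in general \emph{not} nonnegative, so the observation does not directly control the infimum. For finite $\cR$ your claim would follow from LP duality (unboundedness of the primal forces a recession direction, which is a nonnegative relation of negative cost), but for infinite $\cR$ one has to argue further that the global dual system $\{y: y\cdot r\le\psi(r)\ \forall r\in\cR\}$ is nonempty, and that needs a separation/closure argument you have not supplied. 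The paper sidesteps the whole issue: it lets $v_\psi(r')$ and $v_\pi(p')$ be $-\infty$ and simply \emph{defines} $\tilde\psi(r'),\tilde\pi(p')$ to be an arbitrary finite value (namely $\psi(r')$, $\pi(p')$, or $0$) in that case. The validity computation is unaffected, because when the infimum is $-\infty$ one can certainly find a decomposition with cost below $\tilde\psi(r')-\epsilon$ whatever finite value $\tilde\psi(r')$ was assigned. This is both simpler and more robust than the boundedness route you sketch.
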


\begin{proof} 
For $r'\in \cR'$, define 
\begin{equation*}
\nu_{\psi}(r') := \inf\left\{\sum_{r\in \cR}\psi(r)h(r) :
\begin{array}{l}
  \displaystyle r' =\sum_{r\in \cR}rh(r) \text{ and}\\
  h: \cR \to \R_+ \text{ has finite support}
\end{array}
\right\}.
\end{equation*}
Similarly, for $p'\in \cP'$ define
\begin{equation*}
\nu_{\pi}(p'):= \inf\left\{\sum_{r\in \cR}\psi(r)h(r) +\sum_{p\in \cP}\pi(p)g(p):
\begin{array}{l}
\displaystyle p' = \sum_{r\in \cR}rh(r)+\sum_{p\in \cP}pg(p), \\
h: \cR \to \R_+ ~ \text{has finite support and}\\
g: \cP\to \Z_+  ~ \text{has finite support}
\end{array}
\right\}.
\end{equation*}
Since $\cR', \cP' \subseteq \cone(\cR)$, the infima defining $\nu_{\psi}(r')$ and $\nu_{\pi}(p')$ are over nonempty sets.
Thus, $\nu_{\psi}(r') \in [-\infty, \infty)$ for all $r'\in \cR'$ and $\nu_{\pi}(p') \in [-\infty, \infty)$ for all $p'\in \cP'$.

\smallskip

Define the functions $\tilde{\psi}:\cR'\to \R$ and $\tilde{\pi}:\cP'\to \R$ to be
\begin{equation*}
\tilde{\psi}(r') := 
\begin{cases} \nu_{\psi}(r')~&\text{if }\nu_{\psi}(r') > -\infty\\
\psi(r')~&\text{if }\nu_{\psi}(r') = -\infty\text{ and }r'\in \cR\\
0~&\text{otherwise,}
\end{cases}
\end{equation*}
and
\begin{equation*}
\tilde{\pi}(p') := 
\begin{cases} \nu_{\pi}(p')~&\text{if }\nu_{\pi}(p') > -\infty\\
\pi(p')~&\text{if }\nu_{\pi}(p') = -\infty\text{ and }p'\in \cP
\\0~&\text{otherwise}.
\end{cases}
\end{equation*}

Let $r\in \cR$ and define $h:\cR \to \R_+$ to be $h(r) = 1$ and $h(r')=0$ for all $r'\in \cR\setminus \{r\}$.
If $\nu_{\psi}(r)  = -\infty$, then $\tilde{\psi}(r)\leq \psi(r)$.
If $\nu_{\psi}(r)  = -\infty$, then 
\[
\tilde{\psi}(r) = \nu_{\psi}(r) \le \sum_{r\in \cR} \psi(r)h(r) = \psi(r).
\]
Hence, $\tilde{\psi}(r)\leq \psi(r)$ for every $r\in \cR$.
Similarly, $\tilde{\pi}(p)\leq \pi(p)$ for every $p\in \cP$. 
Therefore, $(\tilde{\psi}, \tilde{\pi}) \le (\psi, \pi)$ on $\cR \times \cP$. 
Zorn's Lemma implies that any valid pair is pointwise larger than some minimal valid pair (see, for example, Proposition A.1. in~\cite{basu-paat-lifting}), so it is sufficient to show that $(\tilde{\psi}, \tilde{\pi})$ is valid for $(S, \cR', \cP')$. 

Let $R'$ and $P'$ be matrices with columns in $\cR'$ and $\cP'$, respectively.
Consider $(s',y') \in X_S(R', P')$ and let $\epsilon>0$. 
Let $r' \in \col(R') \subseteq \cR' \subseteq \cone(\cR)$. 
By the definition of $\tilde{\psi}$, there exists a function $h_{r'} :\cR \to \R_+$ with finite support such that
\begin{equation*}
r' = \sum_{r\in \cR} rh_{r'}(r)~~~\text{and}~~~\tilde{\psi}(r') > \left(\sum_{r\in \cR}\psi(r)h_{r'}(r)\right) - \epsilon.
\end{equation*}
Similarly, for each $p' \in \col(P')$, there exist functions $h_{p'}:\cR\to \R_+$ and $g_{p'}: \cP\to \Z_+$, both with finite support, such that
\begin{equation*}
p' = \sum_{r\in \cR} rh_{p'}(r)+\sum_{p\in \cP} pg_{p'}(p)~~~\text{and}~~~\tilde{\pi}(r) > \left(\sum_{r\in \cR}\psi(r)h_{p'}(r)+\sum_{p\in \cP}\pi(p)g_{p'}(p)\right) - \epsilon.
\end{equation*}

Define the matrix $R \in \R^{n \times \mathopen{|}\col(R)|}$ to have columns
\[
\col(R) := \bigcup_{r'\in R'} \text{support}(h_{r'}) ~ \cup ~ \bigcup_{p'\in P'} \text{support}(g_{p'}),
\]
and the matrix $P \in \R^{n\times \mathopen{|}\col(P)|}$ to have columns
\[
\col(P) := \bigcup_{r'\in R'} \text{support}(g_{p'}).
\]
Define $(\tilde{s},\tilde{y})\in \R^{\mathopen{|}\col(R)|}_+\times \Z^{\mathopen{|}\col(P)|}_+$ component-wise to be 
\begin{equation*}
\begin{array}{rcll}
\tilde{s}_r &:=&\displaystyle \sum_{r'\in R'}h_{r'}(r)s'_{r'}+\sum_{\substack{p'\in P'}}h_{p'}(r)y'_{p'} & \forall ~ r\in \col(R), \text{ and }\\[.5 cm]
\tilde{y}_p &:=&\displaystyle  \sum_{\substack{p'\in P'}}g_{p'}(p)y'_{p'} & \forall ~ p\in \col(P).
\end{array}
\end{equation*}
Using the fact that $(s', y') \in X_S(R', P')$ and the definitions of $\tilde{s}$ and $\tilde{y}$, it follows that $R\tilde{s} + P\tilde{y} = R's' + P'y' \in S$.
This implies that $(\tilde{s}, \tilde{y})\in X_S(R, P)$. 
Set $M :=  \sum_{r'\in R'}s'_{r'}+\sum_{p'\in P'}y'_{p'}$.
The value $M$ is a constant because $s'$ and $y'$ are fixed. 
Since $(\psi, \pi)$ is valid for $(S, \cR, \cP)$, we see that

\begin{align*}
&\sum_{r'\in \cR'} \tilde{\psi}(r')s'_{r'}+\sum_{p'\in \cP'}\tilde{\pi}(p')y'_{p'}\\
\geq &\sum_{r'\in \cR'}\left[ \sum_{r\in \cR}\psi(r)h_{r'}(r)-\epsilon\right]s'_{r'}+\sum_{p'\in \cP'}\left[\sum_{r\in \cR}\psi(r)h_{p'}(r) +\sum_{p\in \cP}
\pi(p)g_{p'}(p)-\epsilon\right]y'_{p'}\\
= & \sum_{\substack{r\in \cR\\r'\in \cR'}}\psi(r)h_{r'}(r)s'_{r'}+\sum_{\substack{r\in \cR\\p'\in \cP'}}\psi(r)h_{p'}(r)y'_{p'}+\sum_{\substack{p\in \cP\\p'\in \cP'}}\pi(p)g_{p'}(p)y'_{p'} -\epsilon M\\
= & \sum_{r\in \cR}\psi(r)\tilde{s}_r +\sum_{p\in \cP} \pi(p)\tilde{y}_p - \epsilon M\\
\geq &1-\epsilon M.
\end{align*}
Letting $\epsilon \to 0$ yields
\[
\sum_{r'\in \cR'} \tilde{\psi}(r')s'_{r'}+\sum_{p'\in \cP'}\tilde{\pi}(p')y'_{p'} \ge 1.
\]
Hence, $(\tilde{\psi}, \tilde{\pi})$ is a valid pair for $(S, \cR', \cP')$.
\end{proof}


\section{The fixing region for truncated affine lattices}\label{section:fixing_region}

We now examine the fixing region $\mathcal{F}_{\psi,p^*}$ for a valid function $\psi$ and different choices of $p^*$.
For the rest of the paper, we assume that $S = (b + \Z^n) \cap P$, where $b \in \R^n\setminus \Z^n$ and $P \subseteq \R^n$ is a rational polyhedron. 
These $S$ were called {\em polyhedrally-truncated affine lattices} in~\cite{basu-paat-lifting}.

Let $p^* \in \R^n$ and recall $\cL_{\psi, p^*}$ from~\eqref{def:L-psi}.
One way of finding a minimal lifting of $\psi$ is to find a function $\pi \in \cL_{\psi, p^*}$.
Proposition~\ref{prop:fill-in} in Appendix~\ref{appendix:useful-obs} shows that $\cL_{\psi, p^*}$ is nonempty.
An important ingredient for finding $\pi \in \cL_{\psi, p^*}$ is the value $V_\psi(p^*)$ from~\eqref{eqVpsip}.
In~\cite{dw2008}, Dey and Wolsey gave the following algebraic formula for $V_\psi(p^*)$:
\begin{equation}\label{eq:V_psi}
V_\psi(p^*) = \sup_{w \in \R^n, N \in \N}\bigg\{\frac{1-\psi(w)}{N}: w + Np^* \in S\bigg\}.\end{equation} 

A more geometric description of $V_\psi(p^*)$ was given in~\cite{bcccz}. 
Let $B\subseteq \R^n$ be a maximal $S$-free convex $0$-neighborhood.
Because $S$ is a truncated affine lattice, $B$ is a polyhedron of the form
\begin{equation}\label{eqBDefn}
B= \{r \in \R^n : a^i\cdot r \leq 1 ~\forall~ i \in I\},
\end{equation}
where $I$ is a finite set indexing the facets of $B$ ~\cite{bccz2,moran2011maximal}.
Also, the smallest representation of $B$ is
\begin{equation}\label{eq:psi-from-B}
\psi_B(r) = \max_{i\in I}a^i\cdot r.
\end{equation}
If $B$ is any $S$-free $0$-neighborhood of the form~\eqref{eqBDefn}, even if it is not maximal, then~\eqref{eq:psi-from-B} gives a valid function for $(S, \R^n)$. This fact will be used later.

For $\lambda > 0$, define $\pyr(B, \lambda, p^*)$ to be the pyramid in $\R^n \times \R_+$ with $\frac1\lambda (p^*, 1)$ as the apex and $B \times \{0\}$ as the base, i.e.
\begin{equation}\label{eq:Blambda}
\pyr(B, \lambda, p^*) := \{(r, r_{n+1})\in \R^n\times \R_+
: a^i\cdot r + (\lambda - a^i\cdot p^*)r_{n+1} \leq 1 ~ \forall ~ i\in I\}.
\end{equation}

The following was shown in~\cite[Theorem 11]{bcccz}.
\begin{proposition}\label{prop:geometric-V}
Let $B\subseteq \R^n$ be a maximal $S$-free convex $0$-neighborhood and let $\psi:= \psi_B:\R^n\to \R$ be obtained from $B$ using~\eqref{eq:psi-from-B}.
If $p^* \in \R^n$, then 
\[
V_{\psi}(p^*) = \inf\left\{\lambda > 0 : 
\pyr(B, \lambda, p^*) \textrm{ is } (S \times \Z)\textrm{-free}
\right\}.
\]
\end{proposition} 

In~\cite{bcccz}, the authors studied a variant of $\pyr(B, \lambda, p^*)$ in which $r_{n+1}$ was not constrained to be nonnegative.
Their characterization of $V_{\psi}(p^*)$ in Proposition~\ref{prop:geometric-V} is simply given by \emph{$\pyr(B, \lambda, p^*)$ is $(S\times \Z_+)$-free}.
However, their proof also holds for the current definition of $\pyr(B, \lambda, p^*)$ and Proposition~\ref{prop:geometric-V}.

\subsection{A geometric perspective on $\mathcal L_{\psi,p^*}$}

The main tool for our geometric approach to understanding $\mathcal L_{\psi,p^*}$ is the polyhedron $\pyr(B, V_\psi(p^*), p^*)$ from~\eqref{eq:Blambda}. 

Let $B\subseteq \R^n$ be a maximal $S$-free convex $0$-neighborhood of the form~\eqref{eqBDefn}, $\psi:=\psi_B:\R^n\to \R$ be the valid function for $(S, \R^n)$ obtained from $B$ using~\eqref{eq:psi-from-B}, and $p^*\in \R^n$.
A point $(\bar{x}, \bar{x}_{n+1}) \in S \times \Z_+$ with $\bar{x}_{n+1} \geq 1$ such that $\pyr(B, V_\psi(p^*), p^*)$ contains $(\bar{x}, \bar{x}_{n+1})$ is called a {\em blocking point} for $\pyr(B, V_\psi(p^*), p^*)$.

It was shown in~\cite[Theorem 11]{bcccz} that there is at least one blocking point for $\pyr(B, V_\psi(p^*), p^*)$ for every $p^* \in \R^n$. 
Lemma~\ref{lem:blocking-maximizer} relates the algebraic formula~\eqref{eq:V_psi} for $V_{\psi}(p^*)$ and the important geometric notion of a blocking point for $\pyr(B, V_\psi(p^*), p^*)$. 
Since blocking points always exist, Lemma~\ref{lem:blocking-maximizer} implies that the supremum in~\eqref{eq:V_psi} is actually a maximum and the infimum in Proposition~\ref{prop:geometric-V} is actually a minimum.

\begin{lemma}\label{lem:blocking-maximizer}
Let $B\subseteq \R^n$ be a maximal $S$-free convex $0$-neighborhood of the form \eqref{eqBDefn}.
Let $\psi:\R^n\to \R$ be the valid function for $(S, \R^n)$ obtained from $B$ using~\eqref{eq:psi-from-B}. 
If $(\bar{x}, \bar{x}_{n+1}) \in S \times \Z_+$ is a blocking point of $\pyr(B, V_\psi(p^*), p^*)$, then
\begin{equation*}
(\bar{x} - \bar{x}_{n+1}p^*, \bar {x}_{n+1}) \in \argmax_{w \in \R^n, ~N \in \N}\bigg\{\frac{1-\psi(w)}{N}: w + Np^* \in S\bigg\}.
\end{equation*}
Conversely, if $(w,N)\in \R^n \times \N$ is a maximizer of~\eqref{eq:V_psi}, then $(w + Np^*, N)$ is a blocking point of $\pyr(B, V_\psi(p^*), p^*)$.
\end{lemma}
\begin{proof} 
By~\eqref{eq:Blambda}, $(\bar{x}, \bar{x}_{n+1})$ is a blocking point of $\pyr(B, V_\psi(p^*), p^*)$ if and only if 
\[
a^i\cdot \bar{x} + (V_\psi(p^*) - a^i\cdot p^*)\bar{x}_{n+1} \leq 1 \quad \forall ~ i\in I,
\]
and there exists some $i^* \in I$ such that $a^{i^*}\cdot \bar{x} + (V_\psi(p^*) - a^{i^*}\cdot p^*)\bar{x}_{n+1} = 1$. 
So, $(\bar{x}, \bar{x}_{n+1})$ is a blocking point of $\pyr(B, V_\psi(p^*), p^*)$ if and only if $\bar{x}_{n+1}V_\psi(p^*) + \max_{i \in I} \{a^i\cdot (\bar{x} - \bar{x}_{n+1}p^*)\} = 1$.
By~\eqref{eq:V_psi}, the latter condition holds if and only if 
\[
V_\psi(p^*) = \frac{1 - \psi(\bar{x} - \bar{x}_{n+1}p^*)}{\bar{x}_{n+1}} = \sup_{w \in \R^n, N \in \N}\bigg\{\frac{1-\psi(w)}{N}: w + Np^* \in S\bigg\}.
\] 
This completes the proof.
\end{proof}

\subsection{A universal upper bound}\label{subsecUniUpper} In order to determine what vectors are in $\cF_{\psi, p^*}$, we first show an upper bound on the value of minimal liftings of $\psi$ and then show that this upper bound is tight.
Theorem~\ref{thm:uni upperbound} gives an upper bound using the function $\psi^*_{[p^*, B]} : \R^n \times \R_+ \to \R$ defined by

\begin{equation}\label{eq:ex_trivial_lifting}
\psi^*_{[p^*,B]}((r, r_{n+1})) := \inf\{\psi_{\pyr(B, V_\psi(p^*), p^*)}((r, r_{n+1})+(w,z) ) : (w,z) \in W^+_{S\times \Z_+}\}.
\end{equation}

In~\eqref{eq:ex_trivial_lifting}, $\pyr(B, V_\psi(p^*), p^*)$ is the set from~\eqref{eq:Blambda}, and $\psi_{\pyr(B, V_\psi(p^*), p^*)}$ is obtained from~\eqref{eq:psi-from-B} using $\pyr(B, V_\psi(p^*), p^*)$ written as $\pyr(B, V_\psi(p^*), p^*)=\{(r, r_{n+1}) \in \R^{n}\times \R_+: \bar{a}^i\cdot r \leq 1 ~ \forall ~ i \in I\}$.
We caution the reader that the formula~\eqref{eq:psi-from-B} was introduced for $B$ that contain $0$ in the interior. However, the formula is a well-defined one, even if $0$ lies on the boundary, as is the case for $\pyr(B, V_\psi(p^*), p^*)$. While there is an interpretation of $\psi_{\pyr(B, V_\psi(p^*), p^*)}$ as a cut-generating function for $(S \times \Z_+, \R^n \times \R_+)$, it is not important in what follows. What is important is Theorem~\ref{thm:uni upperbound}, which shows that the restriction of $\psi^*_{[p^*, B]}$ to $\R^n \textcolor{black}{\times \{0\}}$ is a universal upper bound for all minimal liftings $\pi\in \mathcal{L}_{\psi,p^*}$. We view~\eqref{eq:ex_trivial_lifting} as a formula via~\eqref{eq:psi-from-B} applied to $\pyr(B, V_\psi(p^*), p^*)$. The following technical lemma will be useful for establishing this upper bound. 

\begin{lemma}\label{lem:additive} 
Let $B$ be a convex $0$-neighborhood of the form~\eqref{eqBDefn}. 
Let $p^* \in \R^n$ and $\lambda > 0$. 
For $(\textcolor{black}{\bar r}, \textcolor{black}{\bar r_{n+1}}) \in \R^{n}\times \R_+$ and $\mu \geq 0$, define $r' := (\bar r, \bar r_{n+1}) - \mu(p^*, 1)$. 
Then $\psi_{\pyr(B, \lambda, p^*)}((\textcolor{black}{\bar r}, \textcolor{black}{\bar r_{n+1}})) = \psi_{\pyr(B, \lambda, p^*)}(r') + \mu\psi_{\pyr(B, \lambda, p^*)}((p^*,1)).$
\end{lemma}

\begin{proof}~
First, we show 
\begin{align*}
  \argmax_{i \in I} \{a^i\cdot \bar r + (\lambda - a^i\cdot p^*)\bar r_{n+1}\} 
= & \argmax_{i \in I} \{a^i\cdot (\bar r -\bar r_{n+1}p^*)\}\\
= & \argmax_{i\in I} \{(a^i, (\lambda - a^i\cdot p^*))\cdot r'\}.
\end{align*}
The first and second terms are equal since $\lambda \bar{r}_{n+1}$ is a constant, while the first and the third terms are equal because, for every $i \in I$
$$ a^i\cdot \bar r + (\lambda - a^i\cdot p^*)\bar r_{n+1} = a^i\cdot (\bar r - \mu p^*) + (\lambda - a^i\cdot p^*)(\bar r_{n+1} - \mu) +\lambda\mu = (a^i, (\lambda - a^i\cdot p^*))\cdot r' + \lambda\mu.$$

For $i^* \textcolor{black}{\in} \argmax_{i \in I} \{a^i\cdot \bar r + (\lambda - a^i\cdot p^*)\bar r_{n+1}\}$,
$$
\begin{array}{rcl}
\psi_{\pyr(B, \lambda, p^*)}((\bar r,\bar r_{n+1})) 
&= &a^{i^*}\cdot \bar r + (\lambda - a^{i^*}\cdot p^*)\bar r_{n+1} \\ 
& = & (a^{i^*}, (\lambda - a^{i^*}\cdot p^*))\cdot r' + (a^{i^*}, (\lambda - a^{i^*}\cdot p^*))\cdot \mu(p^*,1) \\ 
& = & \psi_{\pyr(B, \lambda, p^*)}(r') + \mu\psi_{\pyr(B, \lambda, p^*)}((p^*,1)). 
\end{array}
$$
The last equation holds because $(a^{i^*}, (\lambda - a^{i^*}\cdot p^*))\cdot (p^*,1) = \lambda = \psi_{\pyr(B, \lambda, p^*)}((p^*,1))$.
\end{proof}

\begin{theorem}\label{thm:uni upperbound}
Let $B$ be a maximal $S$-free $0$-neighborhood of the form~\eqref{eqBDefn} and $\psi$ be the valid function for $(S, \R^n)$ obtained from $B$ using~\eqref{eq:psi-from-B}.
Let $p^* \in \R^n$ and consider $\psi^*_{[p^*, B]}$ defined in~\eqref{eq:ex_trivial_lifting}.
For $\pi\in \mathcal{L}_{\psi,p^*}$ and $p\in \R^n$, $\pi(p) \leq \psi^*_{[p^*, B]}((p,0))$. 
\end{theorem}

\begin{proof}  
To reduce notation in this proof, set 
\(
\Delta := \pyr(B, V_{\psi}(p^*), p^*).
\)
Let $\pi\in \mathcal{L}_{\psi,p^*}$.
Define $\cR := (\R^n\times \{0\}) \cup \{(p^*,1)\}$ and $\cP := \R^n \times \{0\}$.
Using $(\psi, \pi)$, which is a valid pair for $(S, \R^n, \R^n)$, we will create functions $\hat{\psi}:\R^{n+1}\to \R$ and $\hat{\pi} : \R^{n+1} \to \R$ such that $(\hat{\psi}, \hat{\pi})$ is a valid pair for $(S\times \Z_+, \cR, \cP)$.
Since $\R^n \times \R_+ \subseteq \cone(\cR)$, we will be able to apply Theorem \ref{thm:extension} to obtain a minimal valid pair $(\psi', \pi')$ for $(S, \R^n\times \R_+, \R^n \times \R_+)$ that equals $(\psi, \pi)$ on $\R^n\times \R^n$ and satisfies $(\psi', \pi') \le (\psi_\Delta, \psi^*_{[p^*, B]})$ when restricted to $(\R^n\times \{0\}) \times (\R^n\times \{0\})$.

Define $\hat \psi: \cR \to \R$ by $\hat \psi ((r,0)) = \psi(r)$ for all $r \in \R^n$ and $\hat\psi((p^*,1)) = V_\psi(p^*)$. Define $\hat\pi: \cP \to \R$ by $\hat\pi((p,0)) = \pi(p)$ for all $p\in \R^n$.
 
\begin{CLAIM}\label{claim:hatpsivalid}
$(\hat \psi, \hat \pi)$ is valid for $(S\times \Z_+, \cR, \cP)$. 
\end{CLAIM}

\begin{cpf}
Consider matrices $R\in \R^{(n+1)\times k}$ and $P\in \R^{(n+1)\times \ell}$ with columns in $\cR$ and $\cP$, respectively.
Let $(\bar s, \bar y) \in X_{S\times \Z_+}(R,P)$. 
Using two cases, we show that $(\hat \psi, \hat \pi)$ and $(\bar s, \bar y)$ satisfy~\eqref{psi pi ineq-1}.
First, assume that $(p^*, 1)\not \in \col(R)$ or $\bar s_{(p^*,1)} = 0$.
Since $(\psi, \pi)$ is valid for $(S, \R^n, \R^n)$, it follows that
\[
\sum_{r \in \col(R)}\hat\psi(r)\bar s_r + \sum_{p \in \col(P)} \hat\pi(p)\bar y_p 
= \sum_{(r',0) \in \col(R)}\psi(r')\bar s_r + \sum_{(p',0) \in \col(P)} \pi(p)\bar y_p 
\ge 1.
\]

Now, assume that $(p^*, 1) \in \col(R)$ and $\bar s_{(p^*,1)} \neq 0$.
Since $R\bar s + P \bar y \in S \times \Z_+$ and $\cP \subseteq \R^n \times \{0\}$, we have $\bar s_{(p^*,1)} \in \Z_+$. 
Define $\tilde R \in \R^{n \times (k-1)}$ by its columns $\col(\tilde{R}) := \{ r \in \R^n : (r,0) \in \col(R)\}$, that is, the columns of $\tilde{R}$ are the columns of $R\setminus\{(p^*, 1)\}$ projected to the first $n$ coordinates. 
Similarly, define $\tilde P \in \R^{n \times (\ell+1)}$ by the columns $\col(\tilde{P}) := \{p\in \R^n : (p,0) \in \col(P)\} \cup \{p^*\}$. 

Consider the pair $(\tilde s, \tilde y) \in \R^{k-1} \times \R^{\ell+1}$ defined by $\tilde s_r = \bar s_{(r,0)}$ for each $r\in \col(\tilde{R})$ and $\tilde y_p = \bar y_{(p,0)}$ for each $p\in \col(\tilde{P})\setminus \{p^*\}$ and $\tilde y_{p^*} = \bar y_{(p^*,0)} + \bar s_{(p^*,1)}$. 
Since $\bar s_{(p^*,1)} \in \Z_+$ and $R\bar s + P \bar y \in S\times \Z_+$, it follows that $\tilde R\tilde s + \tilde P \tilde y \in S$. 
Thus, $(\tilde s, \tilde y)\in X_S(\tilde R,\tilde P)$. 
By rearranging terms, we see that
\begin{align*}
& \sum_{r \in \col(R)}\hat\psi(r)\bar s_r + \sum_{p \in \col(P)} \hat\pi(p)\bar y_p\\
=&  \sum_{r \in \col(R)\setminus\{(p^*,1)\}}\hat\psi((r,0))\bar s_r + \hat\psi((p^*,1)) \bar s_{(p^*,1)} +
\sum_{p \in \col(P)} \hat\pi(p)\bar y_p  \\
=& \sum_{r \in \col(R)\setminus\{(p^*,1)\}}\hat\psi((r,0))\bar s_r + V_\psi(p^*) \bar s_{(p^*,1)} +
\sum_{p \in \col(P)} \hat\pi(p)\bar y_p\\
=&\sum_{r \in \col(\tilde{R})}\psi(r)\tilde s_r + \sum_{p \in \col(\tilde{P})} \pi(p)\tilde y_p  \\
\geq& 1,
\end{align*}
where the last inequality holds since $(\psi, \pi)$ is a valid pair for $(S, \R^n, \R^n)$.
\end{cpf}

By Theorem~\ref{thm:extension}, there exist functions $\psi': \R^n \times \R_+\to \R$ and $\pi': \R^n \times \R_+\to \R$ such that $(\psi', \pi')$ is a minimal valid pair for $(S\times \Z_+, \R^n \times \R_+, \R^n \times \R_+)$ and $(\psi', \pi') \le (\hat{\psi}, \hat{\pi})$ on $ \cR \times \cP$.
Thus, by construction of $(\hat{\psi}, \hat{\pi})$, we have $\psi'((r,0)) \le \psi(r)$ and $\pi'((p,0))\le \pi(p)$ for all $r, p \in \R^n$.
Since $\psi$ is a minimal valid function for $(S, \R^n)$, we also have that $\psi'((r,0)) = \psi(r)$ for all $r\in \R^n$.
Similarly, since $\pi$ is a minimal lifting of $\psi$, $\pi'((p,0)) = \pi(p)$ for all $p\in \R^n$.
By definition of $V_{\psi}(p^*)$, this implies that 
\begin{equation}\label{eq:coeff_equal}
\psi'((p^*,1)) = \hat\psi((p^*,1)) = V_{\psi}(p^*).
\end{equation}
We now show $\psi'((r,r_{n+1})) \leq \psi_{\Delta}((r,r_{n+1}))$ for $(r,r_{n+1}) \in \R^n \times \R$. Note that $(r,r_{n+1})=(r,0)+r_{n+1}(p^*,1)$. By Lemma~\ref{lem:additive}, 
\begin{equation}\label{eq:additivity}
\psi_{\Delta}((r, 0))+r_{n+1}\psi_{\Delta}((p^*,1))=\psi_{\Delta}((r,r_{n+1})).
\end{equation} 
Note that $\psi'((r,0)) \le \hat{\psi}((r,0)) = \psi(r) = \psi_{\Delta}((r,0))$.
Thus,
\begin{align}
& ~ \psi'((r,r_{n+1})) \nonumber\\
\leq&~   \psi'((r,0)) + r_{n+1}\psi'((p^*,1)) &\text{by Theorem}~\ref{thm:subadditive}\nonumber\\
\leq&~  \hat\psi((r,0)) + r_{n+1}\hat\psi((p^*,1)) &\text{by}~\eqref{eq:coeff_equal}\nonumber\\
=& ~\psi_{\Delta}((r,0)) + r_{n+1}\psi_{\Delta}((p^*,1))&\label{eq:upper_bound_1}\\
=&~ \psi_{\Delta}((r,r_{n+1})).\nonumber&\text{by}~\eqref{eq:additivity}
\end{align}

Let $p\in \R^n$. 
By Proposition~\ref{prop:easy-partial},~\eqref{eq:upper_bound_1}, and $\pi'((p,0)) = \pi(p)$,  we obtain
\begin{align*}
\pi(p)=  &~ \pi'((p,0))\\
\leq &~ \inf\{\psi'((p,0)+(w,z)): (w,z) \in W^+_{S\times \Z_+}\} \\
\leq &~ \inf\{\psi_{\Delta}((p,0)+(w,z)): (w,z) \in W^+_{S\times \Z_+} \}\\
=&~ \psi^*_{[p^*, B]}((p,0)).
\end{align*} 
\end{proof}

\subsection{Towards a description of the fixing region}

In this subsection, let $B$ be a maximal $S$-free convex $0$-neighborhood of the form~\eqref{eqBDefn}, let $\psi:=\psi_B$ be the valid function for $(S, \R^n)$ obtained from $B$ using~\eqref{eq:psi-from-B}, and let $p^* \in \R^n$.
In this subsection, we define a collection of polyhedra (given by explicit inequalities) whose union $\mathcal{X}(B,p^*)$ will be shown to be a subset of $\mathcal F_{\psi,p^*}$.
The results in this subsection consider the pyramid $\pyr(B, \lambda, p^*)$ only for the value $\lambda = V_{\psi}(p^*)$.
So, in order to reduce notation, we frequently use the notation
\[
\Delta := \pyr(B, V_{\psi}(p^*), p^*).
\]

Let $\tilde B \subseteq \R^d$ be an $S$-free 0-neighborhood that takes one of the following forms: either $\tilde{B} =\{r \in \R^d: a^i\cdot r \leq 1 ~ \forall ~ i \in I\}$ or $\tilde B$ is a pyramid of the form~\eqref{eq:Blambda}, which we write as $\tilde{B} =\{r \in \R^{d-1}\times \R_+: a^i\cdot r \leq 1 ~ \forall ~ i \in I\}$. 
For $x\in \R^d$, the \textit{spindle corresponding to $x$} is defined to be
\begin{equation}
\label{eq:R_B-x}
R_{\tilde B}(x) := \{r\in \R^d: (a^i-a^k)\cdot r\leq 0 \text{ and } (a^i-a^k)\cdot(x- r)\leq 0 ~ \forall i\in I\},
\end{equation}
where $\psi_{\tilde B}$ is defined according to~\eqref{eq:psi-from-B}\footnote{We remind the reader that formula~\eqref{eq:psi-from-B} is well-defined for any choice of $B$ containing $0$.} and $k \in I$ is the index such that $\psi_{\tilde B}(x) = a^k\cdot x$.
Spindles were originally used in~\cite{bcccz,dw2008}.

Theorem~\ref{thm:lifting-fixing} provides a geometric inner approximation of the fixing region $\mathcal F_{\psi,p^*}$. 
The inner approximation is given by the set
\begin{equation}\label{eq:compact_notation}
\mathcal X(B,p^*) :=\left(\bigcup_{(\bar x, \bar x_{n+1}) \in \Delta\cap (S\times \Z_+)} \bigg(\bigcup_{i=0}^{\bar x_{n+1}} R_B( \bar x - \bar x_{n+1}p^*) + ip^*\bigg)\right)
\end{equation}

\begin{theorem}\label{thm:lifting-fixing}
The set $ \cX(B,p^*)$ satisfies $ \cX(B,p^*) + W_S \subseteq  \mathcal{F}_{\psi,p^*}$.
Also, if $\pi \in \mathcal{L}_{\psi,p^*}$, $q \in \cX(B,p^*)$ and $w \in W_S$, then
\[
\pi(q+w) = \pi(q) = \psi^*_{[p^*, B]}((q,0)),
\] 
where $\psi^*_{[p^*,B]}$ is the function defined in~\eqref{eq:ex_trivial_lifting}. 
\end{theorem}


We require some tools to prove Theorem~\ref{thm:lifting-fixing}.
For $q \in \R^n$, consider lifting $q$ \emph{after} $p^*$ has been lifted, that is, consider the smallest value that a minimal lifting of $\psi$ can take at $q$ \emph{after} the lifting is restricted to take value $V_{\psi}(p^*)$ at $p^*$.
To this end, define
\begin{equation}\label{eq:seq_lift}
V_{\psi}(q;~p^*) := \inf\left\{\pi(q):~\pi\in \mathcal{L}_{\psi, p^*}\right\}.
\end{equation}

Proposition~\ref{prop:geometric-V} states that $V_{\psi}(p^*)$ can be computed by constructing the pyramid $\Delta \subseteq \R^n \times \R_+$.
Thus, because $V_{\psi}(q;~p^*)$ is defined \emph{after} $V_{\psi}(p^*)$ is fixed, $\Delta$ should affect $V_{\psi}(q;~p^*)$.
This leads us to examine points $(q,\bar{q}) \in \R^n \times \R_+$.
For $\lambda>0$, $\bar{q} \in \R_+$, and $i \in I$, where $I$ is the index set defining $B$ in~\eqref{eqBDefn}, consider the inequality
\begin{equation}\label{eqFamInequalities}
   a^i\cdot r +(V_{\psi}(p^*) - a^i\cdot p^*) r_{n+1} +
(\lambda - a^i\cdot q - (V_{\psi}(p^*) - a^i\cdot p^*)\bar{q})r_{n+2}   \leq 1.
\end{equation}

We can apply the pyramid operator $\pyr$ defined in~\eqref{eq:Blambda} using $\Delta$ as a base to obtain the iterated pyramid
\begin{equation}\label{eq:B2lambda}
\pyr(\Delta, \lambda, (q , \bar{q})) 
=
 \left\{ (r, r_{n+1}, r_{n+2}) \in  \R^n \times \R_+ \times \R_+ :
\begin{array}{l}
r_{n+1} - \bar{q}r_{n+2} \ge 0 \text{ and }\\
 \eqref{eqFamInequalities} \text{ holds } \forall ~ i \in I
\end{array}
\right\}.
\end{equation}
%
Geometrically, the iterated pyramid in~\eqref{eq:B2lambda} is the pyramid (assuming that it is bounded) in $\R^{n+2}$ with base $\Delta \times \{0\}$ and apex $\frac{1}{\lambda}(q, \bar{q}, 1)$.
%
%
In this new pyramid, the inequality $r_{n+1} - \bar{q}r_{n+2} \ge 0$ is the result of lifting the inequality $r_{n+1}\ge 0$ defining $\Delta$.

The first result that we need to prove Theorem~\ref{thm:lifting-fixing} is the following generalization of a result about spindles in~\cite{bcccz,dw2008}.

\begin{proposition}\label{obs:lifting-region}
Let $(x, x_{n+1}) \in \Delta \cap (S \times \Z_+)$.
If $(q,\bar{q}) \in R_{\Delta}((x, x_{n+1})) \cap (\R^n \times \R_+)$, then
\[
\psi_{\Delta}((q,\bar{q})) = \inf\{\lambda > 0 : 
\pyr(\Delta, \lambda, (q,\bar{q}))  \textrm{ is } (S \times \Z \times \Z)\textrm{-free} 
\},
\]
where $\psi_{\Delta}$ is defined from $\Delta$ using~\eqref{eq:psi-from-B}.
\end{proposition}
The proof of Proposition~\ref{obs:lifting-region} is technical and provided in Appendix~\ref{sec:propObsLiftSpindle}.

The next result shows that $V_{\psi}(q;~p^*)$ can be computed by constructing a pyramid in $\R^{n+2}$ with base $\Delta \times \{0\}$ using~\eqref{eq:B2lambda}. 
So, in order to sequentially lift variables to find a $\pi \in \cL_{\psi, p^*}$, we can repeatedly apply the pyramid operator $\pyr$ using the set from the previous lifted variable as a new base.

\begin{proposition}\label{4d lifting cone}
Let $q \in \R^n$.
The value $V_{\psi}(q; p^*)$ satisfies
\begin{equation}\label{eq:geometric_lifting}
V_{\psi}(q; p^*) = \inf\left\{\lambda >0 :
\pyr(\Delta, \lambda, (q,0)) \textrm{ is } (S\times \Z \times\Z)\text{-}\textrm{free}
\right\}.
\end{equation}
\end{proposition}
The proof of Proposition~\ref{4d lifting cone} is in Appendix~\ref{sec:prop6} and is similar to that of Proposition~\ref{prop:geometric-V}.

Proposition~\ref{4d lifting cone} characterizes $V_{\psi}(q; p^*)$ using a pyramid that depends on $(q,0)$.
Proposition~\ref{prop:invariant V} states that $V_{\psi}(q; p^*)$ can be determined using pyramids that depend on certain translations of $(q,0)$ while holding $p^*$ fixed.

\begin{proposition}\label{prop:invariant V}
If $q \in \R^n$ and $ \bar{q} \in \Z_+$, then
\begin{equation*}
\begin{array}{rl@{\hskip .1 cm}l}
V_{\psi}(q; p^*) = & \inf\{\lambda > 0 : &
\pyr(\Delta, \lambda, (q,0))  \textrm{ is } (S \times \Z \times \Z)\textrm{-free } %
\}\\
= &  
\inf\{\lambda > 0 : &
\pyr(\Delta, \lambda, (q,\bar{q}))  \textrm{ is } (S \times \Z \times \Z)\textrm{-free } 
\}.
\end{array}
\end{equation*}
\end{proposition}

\begin{proof}
The first equation follows from Proposition~\ref{4d lifting cone}. 
Define the linear transformation $U:\R^n\times \R\times \R\to \R^n\times \R\times \R$ by
\begin{equation*}
U\left(y, y_{n+1}, y_{n+2}\right) = \left(y, y_{n+1}+y_{n+2}\bar{q}, y_{n+2}\right).
\end{equation*}
Note that $U$ is invertible and $U^{-1}\left(y, y_{n+1}, y_{n+2}\right) = \left(y, y_{n+1}-y_{n+2}\bar{q}, y_{n+2}\right)$. 
Since $\bar{q}\in \Z$, the map $U$ is unimodular. 
Both $U$ and $U^{-1}$ map $S\times \Z\times\Z$ onto itself, and therefore, they map $(S\times \Z\times\Z)$-free sets to $(S\times \Z\times\Z)$-free sets. 

Let $\lambda >0$ and $(r, r_{n+1}, r_{n+2}) \in \pyr(\Delta, \lambda, (q,0))$. 
For each $i \in I$,~\eqref{eq:B2lambda} implies 
\[
a^i\cdot r +(V_{\psi}(p^*) - a^i\cdot p^*) r_{n+1} + (\lambda - a^i\cdot q)r_{n+2}   \leq 1.
\]
Thus,
\begin{equation}\label{eqSat1}
a^i\cdot r +(V_{\psi}(p^*) - a^i\cdot p^*) (r_{n+1} +r_{n+2}\bar{q}) + (\lambda - a^i\cdot q - (V_{\psi}(p^*) - a^i\cdot p^*)\bar{q})r_{n+2}   \leq 1.
\end{equation}
Equation~\eqref{eqSat1} is equivalent to $U(r, r_{n+1}, r_{n+2}) = (r, r_{n+1} +r_{n+2}\bar{q}, r_{n+2})$ satisfying~\eqref{eqFamInequalities} for each $i \in I$.
Also, $r_{n+1} \ge 0$ because $(r, r_{n+1}, r_{n+2}) \in  \pyr(\Delta, \lambda, (q,0))$. 
Thus, $(r_{n+1} - r_{n+2}\bar{q}) + r_{n+2}\bar{q} = r_{n+1} \ge 0$ and $U(r, r_{n+1}, r_{n+2}) $ satisfies every inequality defining $\pyr(\Delta, \lambda, (q,\bar{q}))$ in~\eqref{eq:B2lambda}.
So, $U(r, r_{n+1}, r_{n+2}) \in \pyr(\Delta, \lambda, (q,\bar{q}))$ and 
\[
U \pyr(\Delta, \lambda, (q,0)) ~ \subseteq ~ \pyr(\Delta, \lambda, (q,\bar{q})).
\]

It remains to show $U^{-1} \pyr(\Delta, \lambda, (q,\bar{q})) \subseteq \pyr(\Delta, \lambda, (q,0))$.
If $(r, r_{n+1}, r_{n+2}) \in \pyr(\Delta, \lambda, (q,\bar{q}))$, then the $(n+1)$-st component of $U^{-1}(r, r_{n+1}, r_{n+2})$ is $r_{n+1} - r_{n+2}\bar{q}$.
Because $(r, r_{n+1}, r_{n+2})$ satisfies the inequalities~\eqref{eq:B2lambda}, we have $r_{n+1} - r_{n+2}\bar{q} \ge 0$.
Using arguments from the first part of this proof, 
we have $U^{-1} \pyr(\Delta, \lambda, (q,\bar{q})) \subseteq \pyr(\Delta,\lambda, (q,0))$.
So, $U \pyr(\Delta, \lambda, (q,0)) = \pyr(\Delta, \lambda, (q,\bar{q}))$. 

Since $U$ and $U^{-1}$ preserve $(S\times \Z \times \Z)$-free sets, the previous argument implies that if $ \pyr(\Delta, \lambda, (q,0))$ is $(S\times \Z \times \Z)$-free, then $ \pyr(\Delta, \lambda, (q,\bar{q}))$ is $(S\times \Z \times \Z)$-free, and vice versa.
This gives the desired result. 
\end{proof}
For $t\in \R$, define 
\(
H_t := \R^n \times \{t\}.
\)
The next proposition shows that translating $H_0 \cap R_{\Delta}(\bar x,\bar x_{n+1})$ by $tp^*$ is equal to projecting $H_t\cap R_{\Delta}(\bar x,\bar x_{n+1})$ onto the first $n$ coordinates.
The proof of Proposition~\ref{prop:projection} is given in Appendix~\ref{AppendixA1}.

\begin{proposition}\label{prop:projection}
If $(\bar x,\bar x_{n+1})\in (S\times \Z_+)\cap \Delta$ is a blocking point of $\Delta$ and $t\in \R$, then
\[
\begin{array}{rcl}
H_t\cap R_{\Delta}(\bar x,\bar x_{n+1}) &=& (H_0 \cap R_{\Delta}(\bar x,\bar x_{n+1}))+t(p^*, 1)\\
& = & (R_B(\bar{x} - \bar{x}_{n+1}p^*) \times \{0\})+t(p^*, 1).
\end{array}
\]
\end{proposition}

We can now prove Theorem~\ref{thm:lifting-fixing}.

\begin{proof}[Proof of Theorem \ref{thm:lifting-fixing}] 
Recall that $\Delta = \pyr(B, V_{\psi}(p^*), p^*)$.
Let $(\bar x, \bar x_{n+1}) \in \Delta \cap (S\times \Z_+)$.
If $\bar x_{n+1} = 0$, then $\bar x\in B \cap S$. 
In this case, 
\(
\bigcup_{i=0}^{\bar x_{n+1}} R_B( \bar x - \bar x_{n+1}p^*) + ip^* = R_B(\bar x).
\)
It is well-known (see, for example, ~\cite{dw2008,dw2,ccz,bcccz}) that $R_B(\bar x) + W_S \subseteq R(B)$, where $R(B)$ is the extended lifting region~\eqref{eq:lifting-region} and $R_B(\bar x)$ is the spindle corresponding to $\bar x$ given in~\eqref{eq:R_B-x}.
Thus, by the definition of $R(B)$, we obtain 
\[
\left(\bigcup_{i=0}^{\bar x_{n+1}} R_B( \bar x - \bar x_{n+1}p^*) + ip^*\right)+W_S = R_B(\bar x) +W_S \subseteq \cF_{\psi, p^*}.
\]

It is left to show
\(
\bigcup_{i=0}^{\bar x_{n+1}} R_B( \bar x - \bar x_{n+1}p^*) + ip^* \subseteq \cF_{\psi, p^*}
\)
when $x_{n+1} \ge 1$, i.e., when $(\bar x, \bar x_{n+1})$ is a blocking point of $\Delta$.
Let $\psi^*_{[p^*, B]} :\R^n \times \R_+ \to \R$ be from~\eqref{eq:ex_trivial_lifting}, 
$\pi\in \cL_{\psi, p^*}$, and
\(
q\in R_B( \bar x - \bar x_{n+1}p^*) + ip^*
\)
for some $i\in \{0, \dots, \bar{x}_{n+1}\}$. 
Note that
\[
\begin{array}{rcll}
V_{\psi}(q; p^*) & \leq& \pi(q) & \text{by the definition of }V_{\psi}(q; p^*)\\
& \le & \psi^*_{[p, B^*]}((q,0)) & \text{by Theorem}~\ref{thm:uni upperbound}\\
& = &\displaystyle \inf_{(w,z) \in W^+_{S\times \Z_+}}\psi_{\Delta}((q, 0)+(w,z) )\\
& \le & \psi_{\Delta}((q,i)).
\end{array}
\]

By Proposition~\ref{prop:projection}, $(q,i)\in R_{\Delta}(\bar{x}, \bar x_{n+1}) \cap H_i$, so by Proposition~\ref{obs:lifting-region} with $\bar{q} = i$,
\[
\psi_{\Delta}((q,i)) = \inf\{\lambda > 0 : 
\pyr(\Delta, \lambda, (q,i))  \textrm{ is } (S \times \Z \times \Z)\textrm{-free} 
\}.
\]
By Proposition~\ref{prop:invariant V}, 
\[
\psi_{\Delta}((q,i)) = V_{\psi}(q; p^*).
\]
Thus, $V_{\psi}(q; p^*)= \pi(q) =  \psi^*_{[p^*, B]}((q,0))$.
Note that $\pi$ was chosen arbitrarily in $\cL_{\psi, p^*}$. 
Hence, every function in $\cL_{\psi, p^*}$ agrees on $q$. 
By definition of $\cF_{\psi, p^*}$ and Proposition~\ref{prop:easy-partial} \emph{(a)}, it follows that
\[
\left(\bigcup_{i=0}^{\bar x_{n+1}} R_B( \bar x - \bar x_{n+1}p^*) + ip^*\right)+W_S  \subseteq \cF_{\psi, p^*}.
\]
\end{proof}

\subsection{Translation invariance of fixing region}\label{subsecTranslation}~

\begin{theorem}\label{thm:fixing_covering_trans}
Let $B$ be a maximal $S$-free convex $0$-neighborhood and let $t\in \R^n$ such that $0\in \intr(B+t)$.
Thus, $B+t$ is a maximal $(S+t)$-free convex $0$-neighborhood. 
For $p^*\in \R^n$ and $\hat{p}:=p^*+V_{\psi}(p^*)t\in \R^n$,
\[
\mathcal X(B,p^*)+W_S=\R^n \quad \text{if and only if} \quad
\mathcal X(B+t,\hat{p})+W_{S+t}=\R^n.
\]
\end{theorem}

Theorem~\ref{thm:fixing_covering_trans} states that if a given maximal $S$-free convex $0$-neighborhood $B$ is one point fixable, then any translation $B+t$ such that $B+t$ is $(S+t)$-free is also one point fixable.
The proof of Theorem~\ref{thm:fixing_covering_trans} is technical in nature and is similar to that of Theorem 3.1 in~\cite{basu-paat-lifting}. For this reason, we provide the proof in Appendix~\ref{sec:translation}. %


\section{Application: Fixing Regions of Type 3 triangles}\label{section:fixing_cover} In this section, we find minimal liftings for Type 3 triangles. 
Type 3 triangles, which are defined precisely below,
are maximal $S$-free convex $0$-neighborhoods in $\R^2$ that contains exactly three  points of $S$, one in the relative interior of each facet. 
In Subsection~\ref{ssec:SuffType3}, we identify conditions that guarantee that a Type 3 triangle is one point fixable. 
In Subsection~\ref{sec:mixing-set-type-3}, we show that a family of Type 3 triangles coming from the extensively studied mixing set problem satisfies this sufficient condition.

In this section, let $S = \Z^2+b$ for $b = (b_1, b_2)\in \R^2\setminus \Z^2$. 
Without loss of generality, we assume that $-1 \leq b_1, b_2 \leq 0$, and, by relabeling the coordinates, we assume that $-1 \leq b_2 \leq b_1 \leq 0$. 
Thus, the origin $(0,0)$ is contained in the interior of the triangle $\conv\{\bar s^1, \bar s^2, \bar s^3\}$, where $\bar s^1 := (1+b_1,1+b_2), \bar s^2 := (b_1,1+b_2),$ and $\bar s^3 := (b_1,b_2).$

For $\gamma_1, \gamma_2,\gamma_3 > 0$ with $\gamma_2, \gamma_3<1$, define the vectors
\begin{align}
q^1= q^1(\gamma_1)&:= \left(\frac{1}{(1, \gamma_1)\cdot(b_1+1, b_2+1)},\frac{\gamma_1}{(1, \gamma_1)\cdot(b_1+1, b_2+1)}\right), \nonumber\\
q^2= q^2(\gamma_2)&:=\left(\frac{-1}{(-1, \gamma_2)\cdot(b_1, b_2+1)},\frac{\gamma_2}{(-1, \gamma_2)\cdot(b_1, b_2+1)}\right), \label{eqTriQvals}\\
q^3= q^3(\gamma_3)&:=\left(\frac{\gamma_3}{(\gamma_3, -1)\cdot(b_1, b_2)}, \frac{-1}{(\gamma_3, -1)\cdot(b_1, b_2)}\right),\nonumber
\end{align}
and the triangle
\begin{equation}\label{eq:type-3}
T(\gamma_1, \gamma_2, \gamma_3) :=\{(x_1, x_2)\in \R^2:~ q^i \cdot (x_1, x_2)\leq 1 ~ \forall ~i\in \{1,2,3\}\}.
\end{equation}

Each triangle in the collection $\{T(\gamma_1, \gamma_2, \gamma_3) : \gamma_1, \gamma_2,\gamma_3 > 0 \text{ and }\gamma_2, \gamma_3<1\}$ is a maximal $S$-free convex $0$-neighborhood in $\R^2$ such that each facet contains one of the points $\overline{s}^1,\overline{s}^2$ and $\overline{s}^3$ from $S$ in their relative interior. 
In the literature, these triangles are referred to as \emph{Type 3 triangles}. See~\cite{dw2008} and the references therein for more on Type 3 triangles and the classification of $S$-free sets in $\R^2$. 

\subsection{Sufficient condition for Type 3 triangles to be one point fixable}\label{ssec:SuffType3}
Let $T := T(\gamma_1, \gamma_2, \gamma_3)$ be a Type 3 triangle.
Using~\eqref{eqTriQvals}, define the pyramid
\begin{align}\label{eq:P}
\begin{array}{l}
P :=\{(x_1, x_2, x_3) \in \R^2 \times \R_+: q^2(\gamma_2) \cdot (x_1, x_2) \leq 1,~\\[.1 cm]
\hspace{1.5 in} q^1(\gamma_1) \cdot (x_1, x_2)+(1-\frac{(1, \gamma_1)\cdot (b_1+1, b_2+2)}{(1, \gamma_1)\cdot(b_1+1, b_2+1)})x_3\leq 1,~\\[.1 cm]
\hspace{1.5 in} q^3(\gamma_3) \cdot (x_1, x_2) +(\frac{1}{2}-\frac{(\gamma_3, -1)\cdot(1+b_1, 2+b_2)}{2(\gamma_3, -1)\cdot(b_1, b_2)})x_3\leq 1\}.
\end{array}
\end{align}

Observe $T\times \{0\} = P\cap \{(x_1, x_2, x_3) :~ x_3 = 0\}$.
Also, $P$ contains the {$S\times \Z$} points $(s^1, z^1) := (1+b_1,1+b_2,0)$, $(s^2, z^2) := (b_1,1+b_2,0)$, $(s^3, z^3) := (b_1,b_2,0)$, $(s^4, z^4) := (1+b_1,2+b_2,1)$, $(s^5, z^5) := (b_1,1+b_2,1)$, and $(s^6, z^6) := (1+b_1,1+b_2,2)$, and $P$ has three facets, $F_1, F_2,$ and $F_3$, containing $\{(s^1, z^1), (s^4, z^4)\}$, $\{(s^2, z^2), (s^5, z^5)\}$, and $\{(s^3, z^3), (s^6, z^6)\}$, respectively. 
In order to apply Theorem~\ref{thm:lifting-fixing} to $T$, we need to show $P = \pyr(T, V_\psi(p^*), p^*)$ for some $p^* \in \R^2$. 
In the next result, we give sufficient conditions on $(\gamma_1, \gamma_2,\gamma_3)$ for such a $p^*$ to exist, i.e., we give sufficient conditions for $T$ to be one point fixable.
Note that $W_S = \Z^2$ because $S = \Z^2 + b$. 

\begin{proposition}\label{prop:covering_prop_type3}
Let $T := T(\gamma_1, \gamma_2, \gamma_3)$ be a Type 3 triangle and $P$ be of the form~\eqref{eq:P}.
Let $\psi$ be the valid function for $(S, \R^2)$ obtained from $T$ using~\eqref{eq:psi-from-B}. 
\begin{enumerate}[leftmargin=*]
\item[(i)] $P$ is a pyramid whose apex $a^* = (a^*_1, a^*_2, a^*_3)$ satisfies $a^*_3>0$ if and only if $\gamma_2(2-\gamma_3+2\gamma_1\gamma_3)-\gamma_1\gamma_3>0$.
\item[(ii)] If $P$ is {$(S\times \Z)$}-free, then $\mathcal{X}(T, p^*)+W_S=\R^n$ for $p^* = \frac{1}{a^*_3} (a^*_1, a^*_2)$.
Thus, $\cL_{\psi, p^*}$ contains a unique function. 
\end{enumerate} \end{proposition}

\begin{proof}
The apex of $P$ is $a^* = (a_1^*, a_2^*, a_3^*)$, where 
\begin{align*}
a_1^* & = b_1+\frac{\gamma_2(2+2\gamma_1-\gamma_3)}{\gamma_2(2-\gamma_3+2\gamma_1\gamma_3)-\gamma_1\gamma_3}, \\ 
a_2^* & = b_2+\frac{\gamma_1(2-\gamma_3+2\gamma_2\gamma_3)-(1+\gamma_2)(-2+\gamma_3)}{\gamma_2(2-\gamma_3+2\gamma_1\gamma_3)-\gamma_1\gamma_3}, \\
a_3^* & = \frac{2(1+\gamma_1+\gamma_2-\gamma_2\gamma_3)}{\gamma_2(2-\gamma_3+2\gamma_1\gamma_3)-\gamma_1\gamma_3}.
\end{align*}

In order for $P$ to be a {pyramid} with apex $a^*$ satisfying $a^*_3>0$, it is enough to show that $2(1+\gamma_1+\gamma_2-\gamma_2\gamma_3)>0$ and $\gamma_2(2-\gamma_3+2\gamma_1\gamma_3)-\gamma_1\gamma_3>0$. 
The first inequality holds since $\gamma_3<1$ and the second holds by hypothesis. 
Hence, (i) holds. 

By Proposition \ref{prop:fill-in}, $\cL_{\psi, p^*}$ is nonempty. 
By Theorem~\ref{thm:lifting-fixing}, in order to see that $\cL_{\psi, p^*}$ contains a unique function, it is sufficient to show that $\mathcal X(T, p^*)+W_S =  \mathcal X(T, p^*)+\Z^2 = \R^2$. We draw inspiration from~\cite{dw2008}. The crucial observation is that $P = \pyr(T, V_{\psi}(p^*), p^*)$ for the choice of $p^*$ in the hypothesis. 

Figure 8 in~\cite{dw2008} labels the vertices of the spindles $R_T(s^1)$, $R_T(s^2)$ and $R_T(s^3)$ for $T$ (recall~\eqref{eq:R_B-x} for the definition of a spindle). 
For completeness, we reproduce the labels in Figure~\ref{fig:holes} with the values $v^i$ and $\delta_i$ defined below.
The vertices of $T$ are
\begin{align*}
v^1&=\left(b_1+\frac{1+\gamma_1}{1+\gamma_1\gamma_3}, b_2+\frac{\gamma_3+\gamma_1\gamma_3}{1+\gamma_1\gamma_3}\right),\\
v^2&=\left(b_1+\frac{\gamma_2}{\gamma_1+\gamma_2}, b_2+\frac{1+\gamma_1+\gamma_2}{\gamma_1+\gamma_2}\right), \text{ and}\\
v^3&=\left(b_1+\frac{-\gamma_2}{1-\gamma_2\gamma_3}, b_2+\frac{-\gamma_2\gamma_3}{1-\gamma_2\gamma_3}\right).
\end{align*}
The values of $\delta_i$ for $i\in\{1,2,3\}$ are
\begin{equation*}
\delta_1= \frac{1+\gamma_1\gamma_3}{1+\gamma_1+\gamma_2-\gamma_2\gamma_3},~~~~\delta_2=\frac{\gamma_1+\gamma_2}{1+\gamma_1+\gamma_2-\gamma_2\gamma_3},~~~~
\delta_3=\frac{1+\gamma_1-\gamma_2\gamma_3-\gamma_1\gamma_2\gamma_3}{1+\gamma_1+\gamma_2-\gamma_2\gamma_3}.
\end{equation*}
The $\delta_i$'s are convex coefficients satisfying $s^i = \delta_iv^i + (1-\delta_i)v^{i+1}$ for $i=1,2, 3$, where $v^4 = v^1$. One observes that $\delta_i \in [0,1]$ holds because $\gamma_i > 0$ and $\gamma_2, \gamma_3 < 1$. 

\begin{figure}
\begin{tabular}{rl}
\begin{minipage}{.6\textwidth}
\begin{tikzpicture}[scale = 1.5]
\draw[black,fill=white] (0,0) circle (.2ex);
\node[]at(.15, -.15)  {\small $s^3$};
\draw[black,fill=white] (0,2) circle (.2ex);
\node[]at(-.2, 2)  {\small $s^2$};
\draw[black,fill=white] (2,2) circle (.2ex);
\node[]at(2.2, 2)  {\small $s^1$};

\draw[fill = black!30, opacity = .5] (0.5, 2.5)  -- (1.25, 3.25) -- (2, 3.50025) -- (2, 2.50025) -- (1.25, 1.75025) -- (0.5, 1.5) -- cycle;

\draw[black,fill=black] (.5,1) circle (.2ex);
\node[]at(.35, 1) {\small $o$};
\draw[black,fill=black] (2., 1.4995) circle (.2ex);
\node[]at(2.2, 1.4995)  {\small $c^1$};
\draw[black,fill=black] (0.5, 1.5) circle (.2ex);
\node[]at(.35, 1.5)  {\small $e^1$};
\draw[black,fill=black] (0.5, 2.5) circle (.2ex);
\node[]at(.35, 2.6)  {\small $c^2$};
\draw[black,fill=black] (0., 0.5) circle (.2ex);
\node[]at(-.2, 0.5)  {\small $e^2$};
\draw[black,fill=black] (-0.75, -0.25) circle (.2ex);
\node[]at(-.9, -.25)  {\small $c^3$};
\draw[black,fill=black] (1.25, 1.24975) circle (.2ex);
\node[]at(1.4, 1.1)  {\small $e^3$};

\draw[draw = black](.5,1) -- (2., 1.4995)--(2,2) -- (0.5, 1.5) -- cycle;
\draw[draw = black](.5,1) -- (0.5, 2.5) -- (0,2) -- (0., 0.5) -- cycle;
\draw[draw = black](.5,1) --  (-0.75, -0.25) --(0,0) -- (1.25, 1.24975) -- cycle;

\draw[black,fill=black] (1.25, 1.75025) circle (.2ex);
\node[]at(1.25, 1.6)  {\small $g$};
\draw[black,fill=black] (2, 2.50025) circle (.2ex);
\node[]at (2.15, 2.50025) {\small $i$};
\draw[black,fill=black] (2, 3.50025) circle (.2ex);
\node[]at(2.15, 3.50025)  {\small $j$};
\draw[black,fill=black] (2, 3.00025) circle (.2ex);
\node[]at(2.15, 3.00025)  {\small $m$};
\draw[black,fill=black] (1.25, 3.25) circle (.2ex);
\node[]at(1.1, 3.25)  {\small $k$};
\draw[black,fill=black] (0.5, 2) circle (.2ex);
\node[]at(.35, 2)  {\small $l$};
\draw[black,fill=black] (1.25, 2.25025) circle (.2ex);
\node[]at(1.45, 2.15)  {\small $u$};
\draw[black,fill=black] (1.25, 2.75) circle (.2ex);
\node[]at(1.1,2.75)  {\small $p^*$};

\draw[dashed] (0.5, 2) -- (1.25, 2.75) -- (2, 3.00025) -- (1.25, 2.25025) -- cycle;
\draw[dashed] (1.25, 2.75) --(1.25, 3.25);
\draw[dashed] (1.25, 2.25025)--(1.25, 1.75025);

\draw[dotted] (.5, 3) -- (-1.5, -1) -- (2.5, 1.666)--cycle;

\draw[black,fill=black]  (.5, 3) circle (.2ex) node[above]{\small $v^2$};
\draw[black,fill=black] (2.5, 1.666) circle (.2ex) node[right]{\small $v^1$};
\draw[black,fill=black] (-1.5, -1) circle (.2ex) node[above left]{\small $v^3$};
\end{tikzpicture}
\end{minipage}
&
\begin{minipage}{.3\textwidth}
$\begin{array}{rl}
c^1 &= \delta_1 v^1 \\
c^2 &= \delta_2 v^2 \\
c^3 & = \delta_3 v^3 \\
e^1 & = (1-\delta_1)v^2\\
e^2 & = (1-\delta_2)v^3\\
e^3 & = (1-\delta_3)v^1\\
 g &= s^1-e^3\\
  i &= g-c^3+e^2\\
j &=i-e^1+c^2\\
m &=\frac{1}{2}(i+j)\\
k & = j-g+e^1\\
 l & = \frac{1}{2}(e^1+c^2)\\
 p^* & = k-m+i \\
 u &=g-i+m\\
\end{array}$
\end{minipage}
\end{tabular}
\caption{The spindles of $T$ given in~\cite{dw2008}. $K := \conv\{c^2, k, j, i, g, e^1\}$ is shaded, and $o$ is the origin.\label{fig:holes}}
\end{figure}
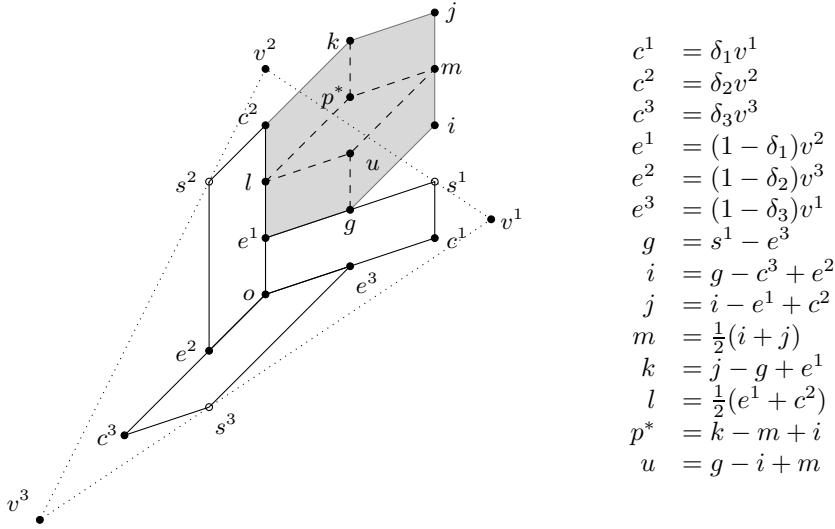

Define the region $K := \conv\{c^2, k, j, i, g, e^1\}$ (see Figure~\ref{fig:holes}). The literature~\cite{dw2008,ccz},~\cite[Theorem 4]{bcccz} shows that $\R^2\setminus (K + \Z^2)$ is contained in $R(T)$, which is contained in $\mathcal{X}(T,p^*)+\Z^2$.
Hence, if we can show that $K\subseteq \mathcal{X}(T,p^*)+\Z^2$, then $K + \Z^2\subseteq \mathcal{X}(T,p^*)+\Z^2$ implying that $\R^2 = \mathcal{X}(T, p^*) + \Z^2$, thus completing the proof. 

To this end, write $K$ as $K = \cup_{i=1}^ 5 K_i$, where
\[
\begin{array}{lll}
K_1 = \conv\{l, e^1, g, u\}, &
K_2 = \conv\{u, m, i, g\},\\
K_3  = \conv\{m,j,k,p^*\}, &
K_4 = \conv\{c^2, k, p^*, l\}, &
 \text{and } K_5 = \conv\{l, p^*, m, u\}.
\end{array}
\]

\begin{CLAIM}\label{eq:H_i-cases}
$K_1 \subseteq R_T(s^4-p^*)$, $K_2\subseteq R_T(s^5-p^*)+(1,1)$, $K_3 \subseteq R_T(s^4-p^*)+p^*$, $K_4 \subseteq R_T(s^5-p^*)+p^*$, and $K_5\subseteq R_T(s^6-2p^*)+p^*$. 
\end{CLAIM} 

The proof of Claim~\ref{eq:H_i-cases} is technical and appears in Appendix~\ref{appendix:case_analysis}.
By Theorem~\ref{thm:lifting-fixing}, $R_T(s^4-p^*), R_T(s^4-p^*)+p^*, R_T(s^5-p^*)+(1,1), R_T(s^5-p^*)+p^*,$ and $R_T(s^6-2p^*)+p^*$ are contained in $\mathcal{X}(T, p^*)+\Z^2$.
Thus, by Claim~\ref{eq:H_i-cases}, $K \subseteq \mathcal{X}(T, p^*)+\Z^2$. 
\end{proof}

\subsection{Type 3 triangles from the mixing set}\label{sec:mixing-set-type-3}
Proposition~\ref{prop:covering_prop_type3} assumes the pyramid $P$ is $(S\times\Z)$-free. This is satisfied by {\em mixing set} Type 3 triangles~\cite{dw2,mixingSet}. 

The mixing set is considered a fundamental set in mixed-integer programming theory. 
The facet-defining inequalities of this set are called ``mixing'' inequalities as they are supposed to ``mix'' the well-known mixed-integer rounding (MIR) inequalities.
The mixing set appears as a relaxation of several important problems~\cite{mixingSet} such as production planning, capacitated facility location, and capacitated network design. 
Recently, inequalities closely related to mixing inequalities have had a huge impact in solving stochastic integer programs~\cite{luedtke2010integer}. 
Mixing inequalities can be used for general mixed-integer linear programs, and there are several studies of its properties~\cite{dey2011note,dash2009mixing}. 
Several generalizations of the mixing set have been studied as well~\cite{van2005continuous,conforti2007mixed,conforti2007mixing}.

If a Type 3 triangle satisfies 
\(
b\in \intr(\conv\left\{(0,-1), (0,-1/2), (-1,-1)\right\}),
\)
then we say that it is a \emph{mixing set Type 3 triangle}.
With this additional constraint on $b$, the mixing set Type 3 triangles are defined by $b$ satisfying $-1 < b_2 < b_1 < 0$ and $b_1 - 2b_2 > 1$. 
Define $\delta_b = -b_1^2-b_2^2+b_1b_2 - b_2$ and observe that 
\[
\delta_b := b_1(b_2-b_1)-b_2(1+b_2)>0.
\] 

Consider the Type 3 triangle  $T(b) := T(\frac{b_2-b_1}{b_1}, \frac{b_1-b_2}{1+b_1}, \frac{b_1}{b_1-b_2-1})$ defined by
\[
\begin{array}{r}
T(b) = \{(x_1, x_2)\in \R^2: ~ (\frac{-b_1}{\delta_b})x_1+(\frac{b_1-b_2}{\delta_b})x_2\leq 1, ~ (\frac{-b_1-1}{\delta_b})x_1+(\frac{b_1-b_2}{\delta_b})x_2\leq 1,~ \\
  ~ ( \frac{-b_1}{\delta_b})x_1+(\frac{b_1-b_2-1}{\delta_b})x_2\leq 1\}.
\end{array}
\]

By construction, $T(b)\cap S = \{(b_1,b_2), (b_1,1+b_2), (1+b_1,1+b_2)\}$. 
Note that the constraints on $b$ imply that $\gamma_1, \gamma_2, \gamma_3 > 0$ and $\gamma_2, \gamma_3 < 1$, as required.
Substituting these values of $\gamma_1, \gamma_2, \gamma_3$ into~\eqref{eq:P}, we obtain the pyramid
\begin{equation}
\begin{array}{r}
P(b) := 
\{(x_1, x_2, x_3)\in \R^2 \times \R_+: (\frac{-b_1}{\delta_b})x_1+(\frac{b_1-b_2}{\delta_b})x_2-(\frac{b_1-b_2}{\delta_b})x_3\leq 1,~\\[.1 cm]
(\frac{-b_1-1}{\delta_b})x_1+(\frac{b_1-b_2}{\delta_b})x_2\leq 1,\label{eq:defn_p_b}~\\[.1 cm]
 ( \frac{-b_1}{\delta_b})x_1+(\frac{b_1-b_2-1}{\delta_b})x_2+(\frac{2-b_1+2b_2}{2\delta_b})x_3\leq 1\}.
 \end{array}
\end{equation}

We verify the two conditions in Proposition~\ref{prop:covering_prop_type3} to conclude that there exists a $p^*\in \R^2$ satisfying one point fixability for mixing set triangles. 
The condition $\gamma_2(2-\gamma_3+2\gamma_1\gamma_3)-\gamma_1\gamma_3>0$ can be checked using $\gamma_1= \frac{b_2-b_1}{b_1}, \gamma_2 = \frac{b_1-b_2}{1+b_1}, \gamma_3 = \frac{b_1}{b_1-b_2-1}$, and the constraints $-1 < b_2 < b_1 < 0$. 
Next, we verify $\intr(P(b))\cap {(S\times \Z)} =\emptyset$.

\begin{proposition}\label{prop: mix_pyramid_lattice_free}
$P(b)$ is {$(S\times \Z)$}-free if $T(b)$ is a mixing set Type 3 triangle. 
\end{proposition}
\begin{proof} 
For $t\in \Z_+$, define $H_t:= \R^2 \times \{t\}$. 
Since $P(b)\cap H_0 = T(b)\times \{0\}$ is $S$-free, we only need to show $\relintr(P(b)\cap H_t)\cap (S \times \{t\})=\emptyset$ for $t\geq 1$. 

For $t\geq 1$, define the split sets
\begin{align*}
C_1&:= \{(x_1,x_2,t)\in \R^3 :~ t\leq x_2\leq t+1\} + (b_1,b_2,0),\\
C_2&:= \{(x_1,x_2,t)\in \R^3 :~ 0\leq -2x_1+x_2\leq 1\} + (b_1,b_2,0), \text{ and}\\
C_3&:= \bigg\{(x_1, x_2,t)\in \R^3:~ \frac{t}{2}\leq -x_1+x_2\leq \frac{t}{2}+\frac{1}{2}\bigg\} + (b_1,b_2,0).
\end{align*}

The splits $C_1, C_2$ and $C_3$ have no points from $S\times \{t\}$ in their relative interior. 
So, if we show $\relintr(P(b)\cap H_t) \subseteq \relintr(C_1)\cup \relintr(C_2)\cup \relintr(C_3)$, then $P(b)$ will be {$(S\times \Z)$}-free, completing the proof.
To this end, assume $(x_1^*, x_2^*, t)\in \relintr(P(b)\cap H_t) \setminus (\relintr(C_1)\cup \relintr(C_2))$. This implies that $(x_1^*, x_2^*,t)$ does not strictly satisfy some inequality defining $C_1$ and some inequality defining $C_2$. This leads to four cases.

\smallskip

\noindent \textit{Case 1.} Suppose  $x_2^*-b_2\leq t$ and $-2(x_1^*-b_1)+(x_2^*-b_2)\leq 0$. Observe that
\begin{equation*}\begin{array}{rll}
&& (\frac{-b_1}{\delta_b})x^*_1+(\frac{b_1-b_2-1}{\delta_b})x^*_2+(\frac{2-b_1+2b_2}{2\delta_b}) \\
& \geq &  (\frac{-b_1}{\delta_b})(\frac{2b_1 + x^*_2 - b_2}{2})+(\frac{b_1-b_2-1}{\delta_b})x^*_2+(\frac{2-b_1+2b_2}{2\delta_b}) \\
& = & (\frac{b_1-2b_2-2}{2\delta_b})x^*_2 + (\frac{2-b_1+2b_2}{2\delta_b})t + (\frac{-2b^2_1 + b_1b_2}{2\delta_b}) \\
 &\geq &  (\frac{b_1-2b_2-2}{2\delta_b})(t+b_2) + (\frac{2-b_1+2b_2}{2\delta_b})t + (\frac{-2b^2_1 + b_1b_2}{2\delta_b}) = 1.
\end{array}\end{equation*}

The first inequality follows from the assumption $-2(x_1^*-b_1)+(x_2^*-b_2)\leq 0$, and the second from the assumption $x_2^*-b_2\leq t$. This contradicts $(x_1^*, x_2^*, t)\in \relintr(P(b)\cap H_t)$ because the third inequality defining $P(b)$ is violated.

\smallskip

\noindent \textit{Case 2.} Suppose $x_2^*-b_2 \leq t$ and $-2(x_1^*-b_1)+(x_2^*-b_2)\geq 1$. We claim $(x_1^*, x_2^*, t)\in \relintr(C_3)$. It is sufficient to show $\frac{t}{2}< -(x_1^*-b_1)+(x_2^*-b_2) < \frac{t}{2}+\frac{1}{2}$. Because $(x_1^*, x_2^*, t)\in \relintr(P(b)\cap H_t)$, the third inequality defining $P(b)$ bounds $x_2^*$: \begin{equation*}
x_2^*> \frac{-b_1}{1+b_2-b_1}x^*_1+\frac{t}{2}+\frac{1+b_2}{2(1+b_2-b_1)}t+\frac{-\delta_b}{1+b_2-b_1}.
\end{equation*}
Using this, we see that
\begin{equation*}
\begin{array}{rl}
&-(x_1^*-b_1)+(x_2^*-b_2)\\[.1 cm]
> &~ -(x_1^*-b_1) +(\frac{-b_1}{1+b_2-b_1}x^*_1+\frac{t}{2}+\frac{1+b_2}{2(1+b_2-b_1)}t+\frac{-\delta_b}{1+b_2-b_1})-b_2\\[.1 cm]
=&~  \frac{t}{2} +(\frac{-1-b_2}{1+b_2-b_1})x_1^*+(\frac{1+b_2}{2(1+b_2-b_1)})t+(\frac{b_1+b_1b_2}{1+b_2-b_1})\\[.1 cm]
\geq&~  \frac{t}{2} +(\frac{-1-b_2}{1+b_2-b_1})(\frac{x^*_2-b_2-1+2b_1}{2})+(\frac{1+b_2}{2(1+b_2-b_1)})t+(\frac{b_1+b_1b_2}{1+b_2-b_1})\\
=&~  \frac{t}{2} +(\frac{-1-b_2}{2(1+b_2-b_1)})x_2^*+(\frac{1+b_2}{2(1+b_2-b_1)})t+(\frac{2b_2+b_2^2+1}{2(1+b_2-b_1)})\\[.1 cm]
\geq&~   \frac{t}{2} +(\frac{-1-b_2}{2(1+b_2-b_1)})(t+b_2)+(\frac{1+b_2}{2(1+b_2-b_1)})t+(\frac{2b_2+b_2^2+1}{2(1+b_2-b_1)})\\[.1 cm]
=&~ \frac{t}{2}+ \frac{1+b_2}{2(1+b_2-b_1)}>~ \frac{t}{2}.
\end{array}
\end{equation*}
The second inequality follows from $-2(x_1^*-b_1)+(x_2^*-b_2)\geq 1$ and $\frac{-1-b_2}{-b_1+b_2+1}<0$, the third follows from $x^*_2\leq t+b_2$, and the fourth follows from $\frac{1+b_2}{2(1+b_2-b_1)}>0$. 

Since $(x_1^*, x_2^*, t)\in \relintr(P(b)\cap H_t)$, the second inequality defining $P(b)$ implies
\begin{equation*}\begin{array}{rll}
 -(x_1^*-b_1)+(x_2^*-b_2)
&< & -x_1^*+b_1+(\frac{\delta_b}{b_1-b_2}+\frac{1+b_1}{b_1-b_2}x_1^*)-b_2\\
&= & (\frac{1+b_2}{b_1-b_2})x_1^*+\frac{-b_2-b_1b_2}{b_1-b_2}\\
&\leq & (\frac{1+b_2}{b_1-b_2})(\frac{2b_1+x_2^*-b_2-1}{2})+\frac{-b_2-b_1b_2}{b_1-b_2}\\
 &= & (\frac{1+b_2}{2(b_1-b_2)})x_2^*+(\frac{2b_1-4b_2-b_2^2-1}{2(b_1-b_2)})\\
&\leq & (\frac{1+b_2}{2(b_1-b_2)})(t+b_2)+(\frac{2b_1-4b_2-b_2^2-1}{2(b_1-b_2)})\\
&= & \frac{t}{2}+(\frac{1-b_1+2b_2}{b_1-b_2})\frac{t}{2}+(\frac{2b_1-3b_2-1}{2(b_1-b_2)})\\
&\leq & \frac{t}{2}+(\frac{1-b_1+2b_2}{b_1-b_2})\frac{1}{2}+(\frac{2b_1-3b_2-1}{2(b_1-b_2)}) = \frac{t}{2}+\frac{1}{2}.
\end{array}\end{equation*}
The second inequality follows since $-2(x_1^*-b_1)+(x_2^*-b_2)\geq 1$ and $\frac{1+b_2}{-b_1+b_2+1}>0$, the third follows since $x_2^*\leq t+b_2$, and the fourth follows since $t\geq 1$ and $1<b_1-2b_2$. 

\noindent \textit{Case 3.} Suppose $x_2^*-b_2\geq t+1$ and $-2(x_1^*-b_1)+(x_2^*-b_2)\leq 0$. 
Observe that
\begin{equation*}\begin{array}{rll}
&&(\frac{-b_1}{\delta_b})x_1^*+(\frac{b_1-b_2}{\delta_b})x_2^*-(\frac{b_1-b_2}{\delta_b})t\\[.1 cm]
&\geq & (\frac{-b_1}{\delta_b})(\frac{2b_1+x_2^*-b_2}{2})+(\frac{b_1-b_2}{\delta_b})x_2^*-(\frac{b_1-b_2}{\delta_b})t\\[.1 cm]
&=& (\frac{b_1-2b_2}{2\delta_b})x_2^*-(\frac{b_1-b_2}{\delta_b})t+(\frac{-2b_1^2+b_1b_2}{2\delta_b})\\[.1 cm]
&\geq &  (\frac{b_1-2b_2}{2\delta_b})(t+1+b_2)-(\frac{b_1-b_2}{\delta_b})t+(\frac{-2b_1^2+b_1b_2}{2\delta_b})\\[.1 cm]
&= & (\frac{-b_1}{2\delta_b})t +(\frac{b_1}{2\delta_b})+1\\[.1 cm]
&\geq &  (\frac{-b_1}{2\delta_b}) +(\frac{b_1}{2\delta_b})+1 = 1.
\end{array}\end{equation*}
 The first inequality follows since $\frac{-b_1}{\delta_b}>0$ and $-2(x_1^*-b_1)+(x_2^*-b_2) \geq 0$, the second inequality follows since $b_1-2b_2>1$ and $x_2^*\geq t+1+b_2$, and the third inequality follows since $t\geq 1$. This contradicts that $(x_1^*, x_2^*, t)\in \relintr(P(b)\cap H_t)$ because the first inequality defining $P(b)$ is violated.

\smallskip

\noindent \textit{Case 4.} Suppose $x_2^*-b_2\geq t+1$ and $-2(x_1^*-b_1)+(x_2^*-b_2)\geq 1$. 
Observe
\begin{equation*}\begin{array}{rll}
(\frac{-b_1-1}{\delta_b})x_1^*+(\frac{b_1-b_2}{\delta_b})x_2^*  &\geq & (\frac{-b_1-1}{\delta_b})(\frac{x_2^*-1+2b_1-b_2}{2}) + (\frac{b_1-b_2}{\delta_b})x_2^* \\[.1 cm]
 &= & (\frac{b_1-2b_2-1}{2\delta_b})x_2^*+ (\frac{-b_1-1}{\delta_b})(\frac{2b_1-b_2-1}{2}) \\[.1 cm]
&\geq & (\frac{b_1-2b_2-1}{2\delta_b})(2+b_2)+(\frac{-b_1-1}{\delta_b})(\frac{2b_1-b_2-1}{2}) \\[.1 cm]
 &= & 1+\frac{b_1-2b_2-1}{2\delta_b} > 1.
\end{array}\end{equation*}
The first inequality comes from $\frac{-b_1-1}{\delta_b}<0$ and $-2(x_1^*-b_1)+(x_2^*-b_2)\geq 1$. 
The second inequality comes from the fact that $b_1-2b_2>1$ and $\delta_b>0$ so the term $\frac{b_1-2b_2-1}{2\delta_b}$ is positive; furthermore, $x_2^* \geq t+1+b_2\geq 2+b_2 >0$ since $t\geq 1$ and $-1<b_2$. 
The last inequality follows because $\delta_b>0$ and $b_1-2b_2>1$. This contradicts $(x_1^*, x_2^*, t)\in \relintr(P(b)\cap H_t)$ as the second inequality defining $P(b)$ is violated.
\end{proof}

\section{Acknowledgements}

The authors would like to thank the anonymous referees for their extremely helpful comments. Their suggestions helped us correct a flaw in one of the major results and make the material more presentable. 


\bibliographystyle{siam}
\bibliography{full-bib}


\appendix

\section{Nonemptiness of $\mathcal{L}_{\psi,p^*}$}\label{appendix:useful-obs}
\begin{proposition}\label{prop:fill-in}
$\mathcal{L}_{\psi,p^*}$ is nonempty.
\end{proposition}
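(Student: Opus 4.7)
The plan is to construct a (not necessarily minimal) lifting $\pi_0$ of $\psi$ with $\pi_0(p^*) = V_\psi(p^*)$ using the Dey--Wolsey formula~\eqref{eq:V_psi}, and then apply Zorn's lemma to descend to a minimal lifting $\bar\pi \leq \pi_0$. The key point is that once a minimal lifting $\bar\pi$ with $\bar\pi(p^*) \leq V_\psi(p^*)$ is in hand, the definition of $V_\psi$ as an infimum over minimal liftings forces $\bar\pi(p^*) = V_\psi(p^*)$, placing $\bar\pi$ in $\mathcal{L}_{\psi, p^*}$.

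For Step~1, I would set $\pi_0(p) := \psi(p)$ for $p \neq p^*$ and $\pi_0(p^*) := V_\psi(p^*)$, and verify that $(\psi, \pi_0)$ is a valid cut-generating pair. Given any $(s,y) \in X_S(R,P)$, let $I^* = \{j : p_j = p^*\}$, $N = \sum_{j \in I^*} y_j$, and $w = Rs + \sum_{j \notin I^*} p_j y_j$, so that $w + N p^* \in S$. For $N = 0$ validity of $(\psi, \psi)$ gives the required inequality immediately. For $N \geq 1$, the Dey--Wolsey formula~\eqref{eq:V_psi} applied to $w$ and $N$ gives $\psi(w) + N V_\psi(p^*) \geq 1$, and sublinearity of $\psi$ (available in the intended setting by Theorem~\ref{thm:subadditive}, since $\psi$ in this paper arises as the smallest representation of a maximal $S$-free convex $0$-neighborhood) gives $\sum \psi(r_i) s_i + \sum_{j \notin I^*} \psi(p_j) y_j \geq \psi(w)$; combining these completes the check.

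For Step~2, I would apply Zorn's lemma to the collection of liftings $\pi$ of $\psi$ with $\pi \leq \pi_0$, ordered by pointwise dominance. Given a chain $\{\pi_\alpha\}$, I would show the pointwise infimum $\pi_* := \inf_\alpha \pi_\alpha$ is again a lifting: for fixed data $R, P, (s,y)$ the columns $p_j$ form a finite set, so the total ordering of the chain lets one pick, for each $\epsilon > 0$, a single $\alpha$ with $\pi_\alpha(p_j) \leq \pi_*(p_j) + \epsilon$ simultaneously at every column $p_j$; substituting into the inequality $\sum \psi(r_i) s_i + \sum \pi_\alpha(p_j) y_j \geq 1$ and letting $\epsilon \to 0$ transfers validity to $\pi_*$. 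Zorn's lemma then yields a minimal element $\bar\pi$ of this restricted poset, and $\bar\pi$ is in fact minimal among all liftings of $\psi$: any lifting $\tilde\pi \leq \bar\pi$ automatically satisfies $\tilde\pi \leq \pi_0$, so lies in the restricted poset, and hence $\tilde\pi = \bar\pi$. Finally, $\bar\pi \leq \pi_0$ gives $\bar\pi(p^*) \leq V_\psi(p^*)$, while minimality of $\bar\pi$ and the definition of $V_\psi$ give $\bar\pi(p^*) \geq V_\psi(p^*)$, so $\bar\pi \in \mathcal{L}_{\psi, p^*}$.

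The main technical obstacle is Step~1: the Dey--Wolsey formula only controls $V_\psi(p^*)$ in terms of $\psi(w)$ for a single vector $w \in \R^n$, so sublinearity of $\psi$ is essential to bridge to the general sum $\sum \psi(r_i) s_i + \sum \psi(p_j) y_j$ that appears in an arbitrary cut. A secondary subtlety is ensuring that the minimal element produced by Zorn is minimal in the full poset of liftings and not merely below $\pi_0$; this is resolved by the observation that any lifting dominated by $\bar\pi$ is automatically dominated by $\pi_0$ and hence belongs to the restricted poset.
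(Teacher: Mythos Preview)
Your proof is correct and follows the same two-step template as the paper: (1) exhibit some lifting $\pi_0$ of $\psi$ with $\pi_0(p^*)=V_\psi(p^*)$, and (2) apply Zorn's lemma to descend to a minimal lifting $\bar\pi\leq\pi_0$, then observe $\bar\pi(p^*)\geq V_\psi(p^*)$ by definition of $V_\psi$. Your handling of both the Zorn step (finite column sets plus total order of the chain) and the ``minimal in the restricted poset implies minimal in the full poset'' point is clean.

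The difference is in the choice of $\pi_0$. The paper uses the Dey--Wolsey function $\phi$ of~\eqref{eq:phi_defn} and simply cites~\cite{dw2008} for its validity and for $\phi(p^*)=V_\psi(p^*)$. You instead take the more elementary $\pi_0$ that agrees with $\psi$ everywhere except at $p^*$, and you verify validity directly from~\eqref{eq:V_psi} together with sublinearity of $\psi$. Your route is more self-contained for the purpose of this proposition; the paper's route introduces $\phi$ here because $\phi$ is reused later (e.g.\ in Proposition~\ref{prop:geom-alg}), so defining it now serves double duty. Both arguments ultimately lean on the same structural assumption on $\psi$ (sublinearity, guaranteed here since $\psi$ is the smallest representation of a maximal $S$-free convex $0$-neighborhood), so neither is strictly more general.
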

\begin{proof} 
Define 
\begin{equation*}\label{eq:phi_defn}
\phi(p) := \inf_{w \in \R^n, N \in \N}\bigg\{\psi(w) + NV_\psi(p^*): w + Np^* \in p + W_S\bigg\}.
\end{equation*}
It was shown in~\cite{dw2008} that $\phi$ is a lifting of $\psi$ and $\phi(p^*) = V_\psi(p^*)$. The proof in \cite{dw2008} considers $\R^2$ and $S = \Z^2$, but the proof holds for $\R^n$ and general $S \subseteq \R^n$.
By Zorn's Lemma, there is a minimal lifting $\pi$ of $\psi$ such that $\pi \le \phi$.
By~\eqref{eqVpsip}, $ V_\psi(p^*) \le \pi(p^*) \le \phi(p^*) =  V_\psi(p^*)$. 
Thus, $\pi\in\mathcal L_{\psi,p^*}$ by~\eqref{def:L-psi}.
\end{proof}

\section{Proof of Proposition~\ref{obs:lifting-region}}\label{sec:propObsLiftSpindle}

\begin{proof}[Proof of Proposition~\ref{obs:lifting-region}]~
Recall $\Delta := \pyr(B, V_{\psi}(p^*), p^*).$
Define $\bar{\lambda} := \psi_{\Delta}((q,\bar{q})) $.
Since $\psi_{\Delta}$ takes the form~\eqref{eq:psi-from-B}, there exists some $k \in I$ such that 
\begin{equation}\label{eqSpindleForQ}
\bar{\lambda} = a^{k}\cdot q + (V_{\psi}(p^*) - a^{k}\cdot p^*)\bar{q} \ge a^{i}\cdot q + (V_{\psi}(p^*) - a^{i}\cdot p^*)\bar{q} ~~\forall i \in I.
\end{equation}
As $(q, \bar{q}) \in R_{\Delta}(x, x_{n+1})$, it follows that $\psi_{\Delta}((x, x_{n+1})) = a^{k}\cdot x + (V_{\psi}(p^*) - a^{k}\cdot p^*)x_{n+1}$.

We claim $(x, x_{n+1},1)  \in \pyr(\Delta, \bar{\lambda}, (q,\bar{q}))$.
It suffices to check that $(x, x_{n+1},1) $ satisfies the inequalities from~\eqref{eqFamInequalities} hold.
Let $i \in I$ and consider the corresponding equation given in~\eqref{eqFamInequalities}.
By~\eqref{eqSpindleForQ} and $(x, x_{n+1}) \in \Delta$, 
\begin{align*}
& ~ a^i\cdot x + (V_{\psi}(p^*)- a^i\cdot p^*)x_{n+1} + (\bar{\lambda} - a^i\cdot q - (V_{\psi}(p^*) - a^i\cdot p^*)\bar{q})\\
\le & ~a^i\cdot x + (V_{\psi}(p^*)- a^i\cdot p^*)x_{n+1}  ~ \le  1.
\end{align*}
Thus, $(x, x_{n+1},1)  \in \pyr(\Delta, \bar{\lambda}, (q,\bar{q}))$.
The latter inequality is an equation when $i = k$, which implies that $(x, x_{n+1},1) \in \intr(\pyr(\Delta, \bar{\lambda}-\epsilon, (q,\bar{q}))$ for $\epsilon > 0$ such that $\bar{\lambda}-\epsilon > 0$.
To complete the proof, it is enough to show $\pyr(\Delta, \bar{\lambda}, (q,\bar{q}))$ is $(S\times \Z \times \Z)$-free.

Let $(s,z_{n+1}, z_{n+2}) \in S\times \Z \times \Z$; we want $(s, z_{n+1}, z_{n+2}) \not \in \intr (\pyr(\Delta, \bar{\lambda}, (q,\bar{q}))$. 
By definition of $\pyr(\Delta, \bar{\lambda}, (q,\bar{q}))$, we assume $z_{n+2} \ge 0$. 
If $z_{n+1} < 0$, then $z_{n+1} - \bar{q}z_{n+2} < 0$ because $\bar{q}, z_{n+2} \ge 0$. So, the inequality $r_{n+1} - \bar{q}r_{n+2} \ge 0$ in~\eqref{eq:B2lambda} separates $(s,z_{n+1}, z_{n+2}) $ from $\intr(\pyr(\Delta,\bar{\lambda}, (q,\bar{q}))$.  

Assume $z_{n+1} \ge 0$.
Since $(s, z_{n+1}) \in S\times \Z_+$ and $\Delta$ is $(S\times \Z)$-free, there exists an $i \in I$ such that $a^i\cdot s + (V_{\psi}(p^*) - a^i \cdot p^*) z_{n+1} \ge 1$.
Thus,
\begin{align*}
& ~ a^i\cdot s + (V_{\psi}(p^*) - a^i \cdot p^*) z_{n+1} + (\bar{\lambda} - a^i\cdot q - (V_{\psi}(p^*) - a^i\cdot p^*)\bar{q}) z_{n+2}\\
\ge & ~ 1 +  (\bar{\lambda} - a^i\cdot q - (V_{\psi}(p^*) - a^i\cdot p^*)\bar{q}) z_{n+2}\\
\ge & ~1,
\end{align*}
where the last inequality follows from~\eqref{eqSpindleForQ}.
This completes the proof.
\end{proof}

\section{Proof of Proposition~\ref{4d lifting cone}}\label{sec:prop6}

\begin{proof}[Proof of Proposition~\ref{4d lifting cone}] 
Recall $\Delta := \pyr(B, V_{\psi}(p^*), p^*).$
Consider the model
\begin{equation}\label{eq:int_model}
\bigg\{(s, y_{p^*}, y_q)\in \R^n_+\times \Z_+\times \Z_+: \sum_{r\in \R^n}rs_r+p^*y_{p^*}+qy_{q}\in S\bigg\}.
\end{equation}
Note that $(s, y_{p^*}, y_q)\in \eqref{eq:int_model}$ if and only if $(s, y_{p^*}, y_q) \in \R^n_+\times \R_+\times \R_+$ and
{
\begin{equation}\label{eq:cts_model}
\sum_{r\in \R^n}(r,0,0) s_r+
(p^*,1,0) y_{p^*}+
(q,0,1)y_{q}\in S\times\Z\times\Z.
\end{equation}
}
\begin{CLAIM}
\label{claim:lift_lambda_once} Let $\lambda>0$. If the inequality
\begin{equation}\label{eq:lambda_lift}
\sum_{r\in \R^n}\psi(r)s_r+V_\psi(p^*)y_{p^*}+\lambda y_{q}\geq 1
\end{equation}
is valid for \eqref{eq:int_model}, then $\pyr(\Delta, \lambda, (q,0))$ is $(S\times\Z\times\Z)$-free.
\end{CLAIM}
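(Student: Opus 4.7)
The plan is to prove the contrapositive: if $B\left(\lambda, p_2^*; V_\psi(p_1^*)\right)$ contains a point of $S \times \Z_+ \times \Z_+$ in its interior, then the inequality \eqref{eq:lambda_lift} is violated by some feasible point of the model \eqref{eq:int_model}. The argument mirrors the standard construction in~\cite{ccz} that leads to Proposition~\ref{prop:geometric-V}, lifted one dimension higher.

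Assume $(\bar x, \bar z_1, \bar z_2) \in (S \times \Z_+ \times \Z_+) \cap \intr\!\left(B\left(\lambda, p_2^*; V_\psi(p_1^*)\right)\right)$. By the explicit description \eqref{eq:B2lambda}, every defining inequality is strict at this point, i.e.
$$a^i\cdot \bar x + (V_\psi(p_1^*) - a^i\cdot p_1^*)\bar z_1 + (\lambda - a^i\cdot p_2^*)\bar z_2 < 1 \qquad \forall i \in I.$$
Setting $w := \bar x - \bar z_1 p_1^* - \bar z_2 p_2^*$ and rearranging gives $a^i\cdot w < 1 - \bar z_1 V_\psi(p_1^*) - \bar z_2 \lambda$ for all $i \in I$. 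Taking the maximum over $i \in I$ and invoking \eqref{eq:psi-from-B}, namely $\psi(w) = \max_{i\in I} a^i\cdot w$, this condenses to
$$\psi(w) + \bar z_1 V_\psi(p_1^*) + \bar z_2 \lambda < 1.$$

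To finish, I exhibit a feasible point of \eqref{eq:int_model} at which \eqref{eq:lambda_lift} fails. Take $(s, y_{p_1^*}, y_{p_2^*})$ with $s_w = 1$, $s_r = 0$ for $r \neq w$, $y_{p_1^*} = \bar z_1$, and $y_{p_2^*} = \bar z_2$. Then $\sum_{r} r s_r + p_1^* y_{p_1^*} + p_2^* y_{p_2^*} = w + \bar z_1 p_1^* + \bar z_2 p_2^* = \bar x \in S$, and the nonnegativity/integrality conditions hold because $\bar z_1, \bar z_2 \in \Z_+$. Evaluating \eqref{eq:lambda_lift} gives $\psi(w) + \bar z_1 V_\psi(p_1^*) + \bar z_2 \lambda < 1$, contradicting validity. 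The main step is thus the algebraic translation between strict interior containment and strict violation of the cut inequality; no real obstacle arises beyond checking the algebra and confirming that $w$ may be placed as a column of the continuous matrix $R$ (which is permitted since $R$ may have arbitrary real columns).
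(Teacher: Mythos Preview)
Your proof is correct and takes essentially the same approach as the paper: both construct the feasible point $(s,y_{p_1^*},y_{p_2^*})$ with $s_w=1$ for $w=\bar x-\bar z_1 p_1^*-\bar z_2 p_2^*$ and use the identity $a^i\cdot \bar x + (V_\psi(p_1^*)-a^i\cdot p_1^*)\bar z_1 + (\lambda-a^i\cdot p_2^*)\bar z_2 = a^i\cdot w + \bar z_1 V_\psi(p_1^*) + \bar z_2\lambda$. The only cosmetic difference is that the paper argues directly (validity $\Rightarrow$ every $S\times\Z_+\times\Z_+$ point lies outside the interior) while you argue by contrapositive.
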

\begin{cpf}
Let $(\overline{x}, \overline{x}_{n+1}, \overline{x}_{n+2})\in S\times \Z\times\Z$. 
If $ \overline{x}_{n+1} < 0$ or $ \overline{x}_{n+2}<0$, then by the definition of $\pyr(\Delta, \lambda, (q,0))$, $(\overline{x}, \overline{x}_{n+1}, \overline{x}_{n+2}) \not \in \pyr(\Delta, \lambda, (q,0))$. 
So assume $(\overline{x}, \overline{x}_{n+1}, \overline{x}_{n+2})\in S\times \Z_+\times\Z_+$. Let $\overline{r} = \overline{x}-\overline{x}_{n+1}p^*+\overline{x}_{n+2}q$, $\overline{z}_1 = \overline{x}_{n+1}$, $\overline{z}_2 = \overline{x}_{n+2}$ and $\overline{s}_r = 1$ if $r=\overline{r}$ and $\overline{s}_r=0$ otherwise. Note that
\begin{equation*}
\sum_{r\in \R^n} r\overline{s}_r +p^*\overline{z}_1+q\overline{z}_2 = \overline{x}\in S.
\end{equation*}
Since~\eqref{eq:lambda_lift} is valid for \eqref{eq:int_model}, it follows that
\begin{align*}
1&\leq \sum_{r\in \R^n} \psi(r)\overline{s}_r +V_{\psi}(p^*)\overline{z}_1+\lambda\overline{z}_2\\
& = \psi(\overline{r})+V_\psi(p^*)\overline{x}_{n+1}+\lambda\overline{x}_{n+2}\\
& = \max_{i \in I} \left\{a_i\cdot(\overline{x}-\overline{x}_{n+1}p^*-\overline{x}_{n+2}q )+V_{\psi}(p^*)\overline{x}_{n+1}+\lambda\overline{x}_{n+2}\right\}\\
& = \max_{i \in I} \left\{a_i\cdot\overline{x}+(V_{\psi}(p^*)-a_i\cdot p^*)\overline{x}_{n+1}+ (\lambda-a_i\cdot q)\overline{x}_{n+2}\right\}.
\end{align*}
Hence, $\pyr(\Delta, \lambda, (q,0))$ is $(S\times\Z\times\Z)$-free.
\end{cpf}

\smallskip

\noindent The converse of the Claim~\ref{claim:lift_lambda_once} is also true.

\begin{CLAIM}
\label{claim:valid_big_set}
If $\lambda>0$ and $\pyr(\Delta, \lambda, (q,0))$ is $(S\times\Z\times\Z)$-free, then \eqref{eq:lambda_lift} is valid for \eqref{eq:int_model}.
\end{CLAIM}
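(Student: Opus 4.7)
The plan is to show the contrapositive-style direction by constructing, from any feasible point of~\eqref{eq:int_model}, a corresponding lattice point in $S \times \Z_+ \times \Z_+$ which, by the free-ness of $B(\lambda, p_2^*; V_\psi(p_1^*))$, must violate at least one of the defining inequalities.

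Concretely, I would take a finite-support $(s, y_{p_1^*}, y_{p_2^*}) \in \R^n_+ \times \Z_+ \times \Z_+$ with
$$\bar{x} := \sum_{r \in \R^n} r s_r + p_1^* y_{p_1^*} + p_2^* y_{p_2^*} \in S,$$
and set $\bar{x}_{n+1} = y_{p_1^*}$ and $\bar{x}_{n+2} = y_{p_2^*}$. Then $(\bar{x}, \bar{x}_{n+1}, \bar{x}_{n+2}) \in S \times \Z_+ \times \Z_+$. Since $B(\lambda, p_2^*; V_\psi(p_1^*))$ is $S \times \Z_+ \times \Z_+$ free, there exists $i^* \in I$ with
$$a_{i^*} \cdot \bar{x} + (V_\psi(p_1^*) - a_{i^*}\cdot p_1^*)\bar{x}_{n+1} + (\lambda - a_{i^*}\cdot p_2^*)\bar{x}_{n+2} \geq 1.$$

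Next I would substitute the definition of $\bar{x}$ into this inequality. The terms $a_{i^*} \cdot p_1^* y_{p_1^*}$ and $a_{i^*} \cdot p_2^* y_{p_2^*}$ coming from expanding $a_{i^*} \cdot \bar{x}$ cancel with $-a_{i^*} \cdot p_1^* \bar{x}_{n+1}$ and $-a_{i^*} \cdot p_2^* \bar{x}_{n+2}$, leaving
$$\sum_{r \in \R^n}(a_{i^*} \cdot r) s_r + V_\psi(p_1^*) y_{p_1^*} + \lambda y_{p_2^*} \geq 1.$$
Finally, since $\psi(r) = \max_{i \in I} a^i \cdot r \geq a_{i^*} \cdot r$ for every $r$ and $s_r \geq 0$, I may replace $a_{i^*} \cdot r$ by $\psi(r)$ in the sum and only strengthen the left-hand side, yielding~\eqref{eq:lambda_lift}. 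This proves the claim.

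There is no real obstacle — the argument mirrors the calculation in the proof of Claim~\ref{claim:lift_lambda_once} in reverse, the only new ingredient being the pointwise bound $a_{i^*} \cdot r \leq \psi(r)$, which together with nonnegativity of $s_r$ upgrades the single-facet inequality at $i^*$ to the cut-generating inequality built from $\psi$. The only mild care needed is to handle general finite-support $s$ rather than the single-column case used in Claim~\ref{claim:lift_lambda_once}.
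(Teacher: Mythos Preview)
Your proposal is correct and follows essentially the same approach as the paper. The paper packages the argument via the sublinear representation $\Psi(r,r_{n+1},r_{n+2})=\max_{i\in I}\{a^i\cdot r+(V_\psi(p_1^*)-a^i\cdot p_1^*)r_{n+1}+(\lambda-a^i\cdot p_2^*)r_{n+2}\}$ of $B(\lambda,p_2^*;V_\psi(p_1^*))$ and invokes its validity for the continuous model~\eqref{eq:cts_model}, whereas you unpack this by selecting a single maximizing index $i^*$ and using $a^{i^*}\cdot r\le\psi(r)$ together with $s_r\ge 0$; the underlying computation is identical.
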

\begin{cpf}
Consider $\Psi : \R^n \times \R \times \R \to \R$ defined by
\begin{equation*}
\Psi(r, r_{n+1}, r_{n+2}) := \max_{i\in I}\left\{a^i\cdot r + (V_{\psi}(p^*) - a^i\cdot p^*)r_{n+1} +(\lambda - a^i\cdot q)r_{n+2}\right\}.
\end{equation*}
Let $(s, y_{p^*}, y_q) \in \eqref{eq:int_model}$. From the observation above, $(s, y_{p^*}, y_q) \in \eqref{eq:cts_model}$. 
Note that $\Psi(r,0,0) = \psi(r), \Psi(p^*,1,0) = V_{\psi}(p^*)$, and $\Psi(q,0,1) = \lambda$. 
It follows that
\begin{align*}
& \sum_{r\in \R^n}\psi(r)s_r+V_{\psi}(p^*)y_{p^*}+\lambda y_q\\
= & \sum_{r\in \R^n}\Psi(r,0,0) s_r +\Psi(p^*,1,0) y_{p^*} +\Psi(q,0,1)y_q ~ \geq ~ 1 .
\end{align*}
Hence, \eqref{eq:lambda_lift} is valid for \eqref{eq:int_model}.
\end{cpf}

By Theorem~\ref{thm:extension} with $\cR = \R^n$ and $\cP = \{p^*_1, p^*_2\}$, $V_\psi(p^*_2;p^*_1)$ is the infimum of $\lambda > 0$ such that~\eqref{eq:lambda_lift} is valid for~\eqref{eq:int_model}. The result now follows from Claims~\ref{claim:lift_lambda_once} and~\ref{claim:valid_big_set}.
\end{proof}

\section{Proof of Proposition~\ref{prop:projection}}\label{AppendixA1}

\begin{proof}[Proof of Proposition~\ref{prop:projection}] 
From~\eqref{eq:Blambda} and~\eqref{eq:R_B-x}, we have 
\begin{equation*}
\begin{array}{rl}
&
R_{\Delta}(\bar x,\bar x_{n+1})\\\\
 =& \left\{\begin{array}{rcl}\hspace{-.15 cm}(r,r_{n+1}): & \hspace{-.375 cm}\begin{array}{l} (a^i - a^k)\cdot r + r_{n+1}((a^k - a^i)\cdot p^*) \leq 0 \text{ and}\\
(a^i - a^k)\cdot(\bar x - r) + (\bar x_{n+1} - r_{n+1})((a^k - a^i)\cdot p^*) \leq 0 ~~ \forall i \in I
\end{array} 
\end{array} \right\},
\end{array}
\end{equation*}
where $k \in I$ is such that $\psi(x) = a^k\cdot x$.
Therefore, 
$$
\begin{array}{rl}
&H_t\cap R_{\Delta}(\bar x,\bar x_{n+1})\\\\ 
 = & \left\{\begin{array}{rcl}(r,t): & \begin{array}{l} (a^i - a^k)\cdot r + t((a^k - a^i)\cdot p^*) \leq 0 \text{ and} \\
(a^i - a^k)\cdot(\bar x - r) + (\bar x_{n+1} - t)((a^k - a^i)\cdot p^*) \leq 0 ~~ \forall ~ i \in I
\end{array}  \end{array}\right\}\\\\
= & \left\{\begin{array}{rcl}(\tilde r + tp^*,t): & \begin{array}{l} (a^i - a^k)\cdot \tilde r \leq 0 \text{ and} \\
(a^i - a^k)\cdot(\bar x - \tilde r) + \bar x_{n+1}((a^k - a^i)\cdot p^*) \leq 0 ~~ \forall ~ i \in I
\end{array} \end{array}\right\}\\\\
= & (H_0 \cap R_{\Delta}(\bar x,\bar x_{n+1}))+t(p^*, 1).
\end{array}
$$
A similar calculation shows 
\(
 H_0 \cap R_{\Delta}(\bar x,\bar x_{n+1}) = R_B(\bar{x} - \bar{x}_{n+1}p^*) \times \{0\}.
\)
~
\end{proof}

\section{Proof of Theorem~\ref{thm:fixing_covering_trans}}\label{sec:translation}

The first lemma required for Theorem~\ref{thm:fixing_covering_trans} is an extension of the so-called `Collision Lemma' (Lemma 3.2 in~\cite{basu-paat-lifting}). 
In this appendix, set $\Delta := \pyr(B, V_{\psi}(p^*), p^*).$

\begin{proposition}\label{prop:new_collision} 
Let $B$ be a maximal $S$-free convex $0$-neighborhood in $\R^n$ of the form~\eqref{eqBDefn}. 
Let $p^*\in \R^n$ and $(\overline{x}, \overline{x}_{n+1}), (\overline{y}, \overline{y}_{n+1}) \in \Delta\cap ({S\times \Z})$, and $i_x, i_y\in I$ satisfy $(a^{i_x}, V_{\psi}(p^*)-a^{i_x}\cdot p^*)\cdot(\overline{x}, \overline{x}_{n+1})=(a^{i_y}, V_{\psi}(p^*)-a^{i_y}\cdot p^*)\cdot(\overline{y}, \overline{y}_{n+1})=1$. 
Let $k_x,k_y\in \Z$ be such that $0\leq k_x\leq\overline{x}_{n+1}$ and $0\leq k_y\leq \overline{y}_{n+1}$, and let $(x, k_x)\in R_{\Delta}(\overline{x}, \overline{x}_{n+1})$ and $(y, k_y)\in R_{\Delta}(\overline{y}, \overline{y}_{n+1})$. 
If $x-y\in W_S$, then $$(a^{i_x}, V_{\psi}(p^*)-a^{i_x}\cdot p^*)\cdot(x, k_x) =(a^{i_y}, V_{\psi}(p^*)-a^{i_y}\cdot p^*)\cdot(y, k_y).$$ 

If $(x, k_x)\in \intr\left(R_{\Delta}(\overline{x}, \overline{x}_{n+1})\right)$ and $(y, k_y)\in \intr\left(R_{\Delta}(\overline{y}, \overline{y}_{n+1})\right)$, then $(a^{i_x}, V_{\psi}(p^*)-a^{i_x}\cdot p^*) = (a^{i_y}, V_{\psi}(p^*)-a^{i_y}\cdot p^*)$. 

\end{proposition}

\begin{proof}
Let $(x, k_x)\in R_{\Delta}(\overline{x}, \overline{x}_{n+1})$ and $(y, k_y)\in R_{\Delta}(\overline{y}, \overline{y}_{n+1})$. 
Assume to the contrary that $(a^{i_x}, V_{\psi}(p^*)-a^{i_x}\cdot p^*)\cdot(x, k_x) < (a^{i_y}, V_{\psi}(p^*)-a^{i_y}\cdot p^*)\cdot(y, k_y)$ and consider $(\overline{y}, \overline{y}_{n+1})+(x-y, k_x-k_y)$ (if the inequality is reversed then consider $(\overline{x}, \overline{x}_{n+1})+(y-x, k_y-k_x)$ instead). 
Since $x-y\in W_S$ and $k_y\leq \overline{y}_{n+1}$, it follows that $(z, z_{n+1}):=(\overline{y}, \overline{y}_{n+1})+(x-y, k_x-k_y) = (\overline{y}+(x-y), (\overline{y}_{n+1}-k_y)+k_x)\in {S\times \Z}$. 
We claim that $(z, z_{n+1})\in \intr(\Delta)$, contradicting that $\Delta$ is {$(S\times \Z)$}-free. 
We will show this using the half-space definition of $\Delta$ from~\eqref{eq:Blambda}. 

Take $i\in I$ and define
$\alpha_i:= (a^{i_x}, V_{\psi}(p^*)-a^{i_x}\cdot p^*).$
If $i = i^x$, then
\begin{align*}
\alpha_{i^x}\cdot (z, z_{n+1})
&\leq 1- \alpha_{i^x}(y, k_y)
+\alpha_{i^x}\cdot(x, k_x) &&\text{since } (\overline{y}, \overline{y}_{n+1})\in S\times \Z_+\\
&< 1- \alpha_{i^y}(y, k_y)
+\alpha_{i^x}\cdot(x, k_x)&&\text{since } (y, k_y)\in  R_{\Delta}(\overline{y}, \overline{y}_{n+1})\\
&\leq 1 && \text{since }a_{i^x}\cdot(x, k_x) < a_{i^y}\cdot(y, k_y). 
\end{align*}
If $i = i^y$, then
\begin{align*}
\alpha_{i_y}\cdot (z, z_{n+1}) & = 
1- \alpha_{i_y}(y, k_y)
+\alpha_{i_y}\cdot(x, k_x) &&\text{since } (\overline{y}, \overline{y}_{n+1})\in S\times \Z_+\\
&< 1- \alpha_{i_x}(x, k_x)
+\alpha_{i_x}\cdot(x, k_x) && \text{since }a_{i^x}\cdot(x, k_x) < a_{i^y}\cdot(y, k_y)\\
&=1. 
\end{align*}
If $i\in I\setminus \{i^x, i^y\}$, then
\begin{align*}
\alpha_{i}\cdot (z, z_{n+1}) & \leq 1+\alpha_i\cdot(x, k_x)-\alpha_i\cdot(y, k_y) &&\text{since } (\overline{y}, \overline{y}_{n+1})\in S\times \Z_+\\
& \leq 1+\alpha_i\cdot(x, k_x)-\alpha_{i^y}\cdot(y, k_y) &&\text{since } (y, k_y)\in  R_{\Delta}(\overline{y},\overline{y}_{n+1})\\
& < 1+\alpha_i\cdot(x, k_x)-\alpha_{i^x}\cdot(x, k_x) && \text{since }a_{i^x}\cdot(x, k_x) < a_{i^y}\cdot(y, k_y)\\
& < 1&&\text{since } (x, k_x)\in  R_{\Delta}(\overline{x},\overline{x}_{n+1}).
\end{align*}
Hence, $(z, z_{n+1})\in \intr(\Delta)$ gives a contradiction.

Assume $(x, k_x)\in \intr\left(R_{\Delta}(\overline{x}, \overline{x}_{n+1})\right)$ and $(y, k_y)\in \intr\left(R_{\Delta}(\overline{y}, \overline{y}_{n+1})\right)$.
Assume to the contrary that $\alpha_{i^x}\neq \alpha_{i^y}$. 
Since $\alpha_{i^x}\neq \alpha_{i^y}$ and $(y, k_y)\in \intr\left(R_{\Delta}(\overline{y}, \overline{y}_{n+1})\right)$, we have
\(
\alpha_{i^x}\cdot (y, k_y)< \alpha_{i^y}\cdot(y, k_y)
\)
and
\begin{equation}\label{eq:coll_leq_2}
\alpha_{i_x}\cdot (\overline{y}-y, \overline{y}_{n+1}-k_y)< \alpha_{i_x}\cdot (\overline{y}-y, \overline{y}_{n+1}-k_y).
\end{equation}
From the previous argument that $\alpha_{i^x}(x, k_x) = \alpha_{i^y}(y, k_y)$. 
Let $i\in I$. 
If $i = i^x$, then
\begin{align*}
\alpha_{i^x}\cdot (z, z_{n+1}) & = 
\alpha_{i^x}\cdot (\overline{y} - y, \overline{y}_{n+1}- k_y)
+\alpha_{i^x}\cdot(x, k_x)\\
&< \alpha_{i^y}\cdot (\overline{y} - y, \overline{y}_{n+1}- k_y)
+\alpha_{i^x}\cdot(x, k_x) &&\text{from}~\eqref{eq:coll_leq_2}\\
& = 1-\alpha_{i^y}(y, k_y)+\alpha_{i^x}\cdot(x, k_x) &&\text{since } (\overline{y}, \overline{y}_{n+1})\in S\times \Z_+\\
& =1.
\end{align*}
If $i = i^y$, then
\begin{align*}
\alpha_{i^y}\cdot (z, z_{n+1})&= 1 -\alpha_{i^y}(y, k_y)
+\alpha_{i^y}\cdot(x, k_x)&&\text{since } (\overline{y}, \overline{y}_{n+1})\in S\times \Z_+\\
& = 1-\alpha_{i^x}(x, k_x)+\alpha_{i^y}\cdot(x, k_x) \\
& <1 && \text{since } (x, k_x)\in  R_{\Delta}(\overline{x},\overline{x}_{n+1}).
\end{align*}
If $i\in I\setminus\{i^x, i^y\}$, then 
\begin{align*}
&~ \alpha_{i}\cdot (z, z_{n+1})\\ =&~ 
\alpha_{i}\cdot (\overline{y}-y , \overline{y}_{n+1}-k_y)
+\alpha_{i}\cdot(x, k_x)\\
<&~ \alpha_{i^y}\cdot (\overline{y}-y , \overline{y}_{n+1}-k_y)
+\alpha_{i}\cdot(x, k_x)&&\text{since }(y, k_y)\in  R_{\Delta}(\overline{y},\overline{y}_{n+1})\\
<&~ 1-\alpha_{i^y}(y, k_y)+\alpha_{i^x}\cdot(x, k_x)&&\text{since }(x, k_x)\in  R_{\Delta}(\overline{x},\overline{x}_{n+1}) \\
=&~1.
\end{align*}
This shows $(z, z_{n+1})\in  \intr(\Delta)$, which is a contradiction.
\end{proof}

\begin{lemma}\label{lemma:patching-2}[Theorem 9.4 in~\cite{Degundji1966}]
Let $P_\omega \subseteq \R^n, \omega \in \Omega$ be a (possibly infinite) family of polyhedra such that any bounded set intersects only finitely many polyhedra, and $\bigcup_{\omega\in \Omega} P_\omega = \R^n$. Suppose there is a family of functions $A_\omega: P_\omega \to \R^n, \omega \in \Omega$ such that $A_\omega$ is continuous over $P_\omega$ for each $\omega\in \Omega$, and for every pair $\omega_1, \omega_2 \in \Omega$, $A_{\omega_1}(x) = A_{\omega_2}(x)$ for all $x \in P_{\omega_1} \cap P_{\omega_2}$. Then there is a unique, continuous map $A:\R^n \to \R^n$ that equals $A_\omega$ when restricted to $P_\omega$ for each $\omega\in \Omega$.\end{lemma}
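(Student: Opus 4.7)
The plan is to construct $A$ pointwise via the compatibility hypothesis, then establish continuity by a local argument that exploits local finiteness together with the fact that polyhedra are closed subsets of $\R^n$. First, for each $x\in\R^n$, pick any $\omega\in\Omega$ with $x\in P_\omega$ (such an $\omega$ exists by the covering hypothesis) and set $A(x):=A_\omega(x)$. The compatibility condition $A_{\omega_1}(x)=A_{\omega_2}(x)$ on $P_{\omega_1}\cap P_{\omega_2}$ makes this independent of the choice, so $A$ is well-defined, and any function restricting to $A_\omega$ on each $P_\omega$ must coincide with $A$ everywhere, giving uniqueness.

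For continuity I would fix an arbitrary $x_0\in\R^n$ and an $\epsilon>0$, and produce $\delta>0$ such that $\|A(y)-A(x_0)\|<\epsilon$ whenever $\|y-x_0\|<\delta$. Choose a bounded open ball $U$ around $x_0$. By local finiteness only finitely many polyhedra meet $U$; call them $P_{\omega_1},\ldots,P_{\omega_m}$. Split the indices into $J:=\{i:x_0\in P_{\omega_i}\}$ and its complement. For each $i\notin J$, since $P_{\omega_i}$ is closed and $x_0\notin P_{\omega_i}$, there is an open ball $U_i$ around $x_0$ disjoint from $P_{\omega_i}$. Replacing $U$ by its intersection with these finitely many $U_i$ yields a (still bounded) open neighborhood of $x_0$ in which every point lies in some $P_{\omega_i}$ with $i\in J$.

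Next, for each $i\in J$, continuity of $A_{\omega_i}$ on $P_{\omega_i}$ at $x_0$ produces $\delta_i>0$ with $\|A_{\omega_i}(y)-A_{\omega_i}(x_0)\|<\epsilon$ for all $y\in P_{\omega_i}$ within distance $\delta_i$ of $x_0$. Let $\delta$ be the minimum of the $\delta_i$ for $i\in J$ together with the radius of the shrunken $U$; here finiteness of $J$ is crucial so that the minimum is positive. Any $y$ with $\|y-x_0\|<\delta$ lies in the shrunken $U$, hence in some $P_{\omega_i}$ with $i\in J$, and by compatibility $A(x_0)=A_{\omega_i}(x_0)$ for each such $i$. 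Thus $\|A(y)-A(x_0)\|=\|A_{\omega_i}(y)-A_{\omega_i}(x_0)\|<\epsilon$, as required.

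The main obstacle is reducing the gluing problem to a finite list of continuous pieces at each base point; this is exactly what local finiteness (to make $|J|$ and $|J^c|$ finite) combined with closedness of polyhedra (used to excise the pieces not containing $x_0$) supplies. Without both ingredients the naive uniform bound would collapse: the minimum $\min_{i\in J}\delta_i$ could be zero if $J$ were infinite, and points approached by pieces not containing $x_0$ would violate the identification $A(x_0)=A_{\omega_i}(x_0)$.
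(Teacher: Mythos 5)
Your argument is correct. Note, however, that the paper does not prove this lemma from scratch: its entire proof is a one-line reduction to Theorem 9.4 in Chapter III of Dugundji's textbook (the pasting lemma for locally finite families of closed sets), with the only observation being that polyhedra are closed. What you have written is, in effect, a self-contained proof of that cited theorem in the special case at hand: well-definedness and uniqueness from compatibility plus the covering property, and continuity at an arbitrary $x_0$ by using local finiteness to reduce to finitely many pieces, closedness to excise the pieces not containing $x_0$ on a small ball, and a finite minimum of the $\delta_i$'s coming from relative continuity of each $A_{\omega_i}$. The argument is sound (the only unstated point, that the index set $J$ is nonempty, is immediate since some piece contains $x_0$ and hence meets the ball), and it makes explicit exactly where the two hypotheses --- local finiteness and closedness --- are used, which the citation-based proof leaves implicit; the paper's approach buys brevity by outsourcing this standard topology to the reference, while yours buys transparency at the cost of a page of routine $\epsilon$--$\delta$ bookkeeping.
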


\begin{proof} This follows from a direct application of Theorem 9.4 in Chapter III of~\cite{Degundji1966} by noting that polyhedra are closed sets.
\end{proof}

\begin{proposition}\label{prop:finite_intersection}
Let $B$ be a maximal $S$-free convex $0$-neighborhood in $\R^n$ such that $\intr(B\cap \conv(S)) \neq \emptyset$. Then any bounded set $U\subseteq \R^n$ intersects a finite number of polyhedra from $\mathcal X(B, p^*)+W_S$. 
\end{proposition}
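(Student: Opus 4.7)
The plan is to show that $\mathcal X(B,p^*)$ is itself a finite union of bounded polyhedra, and then to use the discreteness of $W_S$ to conclude that only finitely many $W_S$-translates can meet $U$.

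First I would use the hypothesis $\intr(B\cap\conv(S))\neq\emptyset$ (combined with maximality of $B$) to argue that $B$ is a bounded polyhedron and that $V_\psi(p^*)>0$. With these two facts in hand, $B(V_\psi(p^*),p^*)\cap(\R^n\times\R_+)$ becomes a bounded truncated cone with base $B\times\{0\}$ and apex $(p^*,1)/V_\psi(p^*)$. Since $S\times\Z_+$ is a discrete set, the intersection $(S\times\Z_+)\cap B(V_\psi(p^*),p^*)$ is therefore finite. The outer index of the union defining $\mathcal X(B,p^*)$ in~\eqref{eq:compact_notation} thus ranges over a finite set, and for each outer index the inner index $i\in\{0,\ldots,\bar x_{n+1}\}$ is also finite.

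Next, each spindle $R_B(v)$ is a bounded polyhedron: from~\eqref{eq:R_B-x}, its recession cone consists of directions $d$ with $(a^i-a^k)\cdot d=0$ for every $i\in I$, which is precisely the lineality space of $B$; and this is trivial when $B$ is bounded. Hence $\mathcal X(B,p^*)$ is a finite union of bounded polyhedra and is itself bounded. Now $W_S\subseteq\Z^n\cap\lin(P)$ is a discrete subgroup of $\R^n$, so a polyhedron of the form $Q+w$ (with $Q$ one of the finitely many pieces of $\mathcal X(B,p^*)$ and $w\in W_S$) meets $U$ precisely when $w\in (U-Q)\cap W_S$; this is the intersection of a bounded set with a discrete set and is therefore finite. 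Summing over the finitely many choices of $Q$ yields the desired conclusion.

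The main obstacle is the first step: extracting boundedness of $B$ and positivity of $V_\psi(p^*)$ from the interior hypothesis. In the Type 3 triangle setting of Section~\ref{section:fixing_cover} both facts follow by inspection, since $B$ is an explicit bounded triangle and $V_\psi(p^*)$ is realized geometrically as the reciprocal of the apex height of a bounded truncated cone (cf.\ Proposition~\ref{prop:push_facets}). For general polyhedrally-truncated affine lattices a dedicated argument is required. The natural approach invokes Minkowski's convex body theorem (as already employed in Claim~\ref{claim:integral}): an unbounded recession direction for $B(V_\psi(p^*),p^*)$ above $r_{n+1}=0$, combined with the full-dimensional interior provided by $\intr(B\cap\conv(S))\neq\emptyset$, would produce a cylinder of infinite volume symmetric about a lattice-type center, yielding an $S\times\Z_+$ point in the interior of $B(V_\psi(p^*),p^*)$ and contradicting $S\times\Z_+$-freeness.
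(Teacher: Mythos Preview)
Your argument has a genuine gap at exactly the point you flag as ``the main obstacle'': neither boundedness of $B$ nor positivity of $V_\psi(p^*)$ follows from the hypothesis $\intr(B\cap\conv(S))\neq\emptyset$, and the Minkowski patch you sketch does not repair this. For a concrete counterexample, take $S=b+\Z^n$ (so $\conv(S)=\R^n$ and the interior hypothesis is automatic) and let $B$ be a split, say $B=\{r:\,b_1\leq r_1\leq b_1+1\}$ with $-1<b_1<0$. This $B$ is maximal $S$-free, unbounded, and its spindles $R_B(\bar x)$ are unbounded slabs; moreover $B\cap S$ is infinite, so the outer index in~\eqref{eq:compact_notation} ranges over infinitely many $(\bar x,\bar x_{n+1})$. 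Your framework of ``finite union of bounded polyhedra'' simply does not apply. For the second claim, take $p^*=0$: then $w+Np^*\in S$ forces $w\in S\subseteq\R^n\setminus\intr(B)$, hence $\psi(w)\geq 1$ and $V_\psi(0)=0$; the apex of $B(V_\psi(p^*),p^*)$ is at infinity and the upper half is not a bounded truncated cone. The Minkowski argument you propose cannot rule these out: the split is $S$-free despite having a full hyperplane of recession directions, precisely because Minkowski's theorem for the affine lattice $b+\Z^n$ gives a point congruent to $0$, not to $b$, inside a large symmetric body---so no contradiction with $S$-freeness arises.

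The paper avoids all of this by lifting rather than projecting. It passes to $\R^{n+1}$, observes that $\intr(B\cap\conv(S))\neq\emptyset$ implies $\intr(B(V_\psi(p^*),p^*)\cap\conv(S\times\Z_+))\neq\emptyset$, and then invokes an external local-finiteness result (Theorem~2.7 of~\cite{basu-paat-lifting}) for the $(n{+}1)$-dimensional lifting region $R(B(V_\psi(p^*),p^*))+W_{S\times\Z_+}$, applied to the bounded set $\tilde U=U\times[0,1]$. That theorem is stated for arbitrary maximal $S$-free polyhedra under exactly this interior hypothesis and does not require boundedness. Finiteness is then pushed back down to $\R^n$ via Proposition~\ref{prop:projection}, which identifies each height-$t$ slice of an $(n{+}1)$-dimensional spindle with a $tp^*$-translate of the corresponding piece of $\mathcal X(B,p^*)$. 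So the paper's argument is essentially a reduction to an already-established locally-finite covering result one dimension up, whereas your direct approach would need a substantially more delicate case analysis (handling the lineality space of $B$ and the possibility $V_\psi(p^*)\le 0$) to go through.
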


\begin{proof}
Recall that $B$ is a full-dimensional set, so, by construction, $\Delta$ is full-dimensional.
Also, $\intr(\conv(S)\cap B)\neq \emptyset$ and $\intr(\conv({S\times \Z})\cap \Delta)\neq \emptyset$. 
Set $\tilde{U}:= U\times [0,1]\subseteq \R^{n+1}$. 
$\tilde{U} \subseteq \R^{n+1}$ is bounded, and by Theorem 2.7 in~\cite{basu-paat-lifting}, $\tilde{U}$ intersects finitely many polyhedra from $R(\Delta)+W_{S\times \Z_+} = R(\Delta)+W_S\times \{0\}$. 
Say for $i=1, \dots, k$, $\tilde{U}$ intersects $\tilde{P}_i+(w_i,0)$, $(w_i,0)\in W_S\times\{0\}$ and $\tilde{P}_i$ is a polyhedron in $R(\Delta)$. 

For $t\in \Z$, Proposition~\ref{prop:projection} states that the projection of $H_t\cap (\tilde{P}_i+(w_i,0))$ onto $\R^n$ is {$\proj_{\R^n}(H_0\cap \tilde{P}_i)+tp^*+w_i$, where $\proj_{\R^n}(\cdot)$} denotes the projection onto the first $n$ coordinates. 
By definition of $\mathcal X(B, p^*)+W_S$, all polyhedra in $\mathcal X(b, p^*)+W_S$ are of the form $\proj_{\R^n}(H_0\cap \tilde{P}_i)+tp^*+w_i$, where $t \le x_{n+1}$ for some blocking point $(x, x_{n+1}) \in \R^n\times \R$ corresponding to $\Delta$. 
Since $\tilde{U}$ is bounded, $H_t\cap\tilde{U}\cap (\tilde{P}_i+(w_i,0))\neq \emptyset$ for only a finite number of integral $t$, for each $i=1, \dots, k$. 
Hence, $U$ only intersects a finite number of polyhedra from $\mathcal X(B, p^*)$. 
\end{proof}

For $\epsilon>0$ and $x\in \R^d$, define $D(x; \epsilon) : = \{y\in\R^d:\|x-y\|<\epsilon\}$. 

\begin{proposition}\label{prop:closed}
Let $B$ be a maximal $S$-free convex $0$-neighborhood in $\R^n$. 
For $p^*\in \R^n$, the set $\mathcal X(B, p^*)+W_S$ is closed.
\end{proposition}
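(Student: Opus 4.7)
The plan is to show that $\mathcal X(B,p^*)+W_S$ is a locally finite union of closed polyhedra, and then deduce closedness from a standard topological argument.

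First, I would observe that every set in the collection whose union defines $\mathcal X(B,p^*)+W_S$ is a closed polyhedron. Indeed, the spindle $R_B(\bar x - \bar x_{n+1} p^*)$ is defined by a finite system of linear inequalities in~\eqref{eq:R_B-x}, hence is a closed polyhedron, and translating by the fixed vector $i p^* + w$ (for $0\le i\le \bar x_{n+1}$ and $w\in W_S$) preserves this property. Thus $\mathcal X(B,p^*)+W_S$ is a union of closed polyhedra indexed by the (possibly infinite) triples $((\bar x,\bar x_{n+1}),i,w)$.

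Second, I would invoke Proposition~\ref{prop:finite_intersection} to conclude that this collection is locally finite: every bounded set $U\subseteq\R^n$ meets only finitely many members of the collection. This is the key structural ingredient, and it is the one place where the geometry of $B(V_\psi(p^*),p^*)$ and the blocking points enter (together with the projection identity of Proposition~\ref{prop:projection}).

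Finally, closedness follows by the standard fact that a locally finite union of closed sets is closed. Concretely, let $(x_k)_{k\in\N}\subseteq \mathcal X(B,p^*)+W_S$ be a convergent sequence with limit $x^*\in\R^n$. Since $(x_k)$ is bounded, there is a bounded neighborhood $U$ of $x^*$ containing the whole tail of the sequence. By local finiteness, $U$ meets only finitely many polyhedra $P_1,\ldots,P_N$ from the collection, so the tail lies in the finite union $P_1\cup\cdots\cup P_N$. A finite union of closed sets is closed, whence $x^*\in P_1\cup\cdots\cup P_N\subseteq \mathcal X(B,p^*)+W_S$, as desired.

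The main (and only) substantive obstacle is the local finiteness step; once Proposition~\ref{prop:finite_intersection} is in hand, the closedness statement reduces to routine point-set topology. If the additional hypothesis $\intr(B\cap\conv(S))\neq\emptyset$ in Proposition~\ref{prop:finite_intersection} is not automatically satisfied here, one would need to handle the degenerate case separately; however, since $B$ is a maximal $S$-free convex $0$-neighborhood and $S$ is a polyhedrally-truncated affine lattice, this interior condition holds in all cases of interest, so no extra argument is needed.
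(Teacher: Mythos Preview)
Your proposal is correct and follows essentially the same approach as the paper: both arguments use Proposition~\ref{prop:finite_intersection} to conclude that $\mathcal X(B,p^*)+W_S$ is a locally finite union of closed polyhedra, and then deduce closedness by routine point-set topology (the paper shows the complement is open by finding a small ball around any point outside, whereas you use sequential closedness --- an equivalent formulation). Your remark about the hypothesis $\intr(B\cap\conv(S))\neq\emptyset$ is a fair observation; the paper's proof tacitly uses Proposition~\ref{prop:finite_intersection} without rechecking it, so you are simply being more careful on this point.
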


\begin{proof}
Let $x\not\in \mathcal X(B, p^*)+W_S$ and consider $D(x,1)$. 
From Proposition~\ref{prop:finite_intersection}, $D(x,1)$ intersects only finite many polyhedra $P_1, \dots, P_k$ from $\mathcal X(B, p^*)+W_S$. 
Each $P_i$ is closed, so the finite union $\cup_{i=1}^k P_i$ is too. 
Since $x\not\in \cup_{i=1}^k P_i$, there exists $\epsilon>0$ such that $D(x; \epsilon)\subseteq D(x;1)$ does not intersect $P_i$ for $i=1, \dots, k$. 
So, $D(x; \epsilon)\cap (\mathcal X(B, p^*)+W_S) = \emptyset$. 
This implies $\R^n\setminus (\mathcal X(B, p^*)+W_S)$ is open, so $\mathcal X(B, p^*)+W_S$ is closed. 
\end{proof}

Let $t$ be as in Theorem~\ref{thm:fixing_covering_trans}. For $i\in I$, set $a^i_t := \frac{a^i}{1+a^i\cdot t}$. Observe 
\begin{equation*}
B+t = \{r\in \R^n:~a^i_t\cdot r\leq 1 ~ \forall i\in I\},
\end{equation*}
and 
\begin{align*}
\Delta+(t,0)= \left\{(r, r_{n+1})\in \R^{n+1}:~a^i_t\cdot r +\left(\frac{V_{\psi}(p^*)-a^i\cdot p^*}{1+a^i\cdot t}\right)r_{n+1}\leq 1 ~ \forall i\in I\right\}.
\end{align*}
The apex of $\Delta+(t,0)$ is $\frac{1}{V_{\psi}(p^*)}(p^*+V_{\psi}(p^*)t,1)$. Define
\begin{equation}\label{def:p_hat}
\hat{p}:= p^*+V_{\psi}(p^*)t.
\end{equation}

For each $k\in \Z$, $k\geq 0$, and $i\in I$ define $T^k_i:\R^n\to \R^n$ to be $$T^k_i(x) := x+(a^i, V_{\psi}(p^*)-a^ip^*)\cdot(x,k)t.$$
The next result follows from a direct calculation.
\begin{proposition}\label{prop:invertible}
The function $T^k_i$ is invertible with the inverse defined by $$(T^k_i)^{-1}(x) = x-\left(a^i_t, \frac{V_{\psi}(p^*)-a^i\cdot p^*}{1+a^i\cdot t}\right)\cdot(x,k)t.$$
\end{proposition}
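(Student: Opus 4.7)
The plan is to verify the formula by direct affine-algebraic computation. Writing $\beta := (V_{\psi}(p^*) - a^i \cdot p^*)k$ (a scalar independent of $x$), the map $T^k_i$ has the convenient form $T^k_i(x) = x + (a^i \cdot x + \beta)\,m$, i.e.\ an affine shear in the direction $m$. To show it is invertible, I will first justify that the denominator $1 + a^i \cdot m$ appearing in the stated inverse is nonzero. This uses the standing hypothesis that $B + m$ is a $0$-neighborhood, which is equivalent to $-m \in \intr(B)$; hence $a^i \cdot (-m) < 1$, so $1 + a^i \cdot m > 0$ for every $i \in I$. This also reassures us that $a^i_m = a^i/(1+a^i\cdot m)$ is well-defined, consistent with the definition used above.

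Next I will solve the equation $T^k_i(x) = y$ for $x$. Taking the inner product of $y = x + (a^i \cdot x + \beta)m$ with $a^i$ yields
\[
a^i \cdot y \;=\; (a^i \cdot x)(1 + a^i \cdot m) + \beta\,(a^i \cdot m),
\]
so $a^i \cdot x = (a^i \cdot y - \beta\,(a^i \cdot m))/(1 + a^i \cdot m)$. Substituting this back into $x = y - (a^i \cdot x + \beta)m$ and simplifying gives
\[
x \;=\; y \;-\; \frac{a^i \cdot y + \beta}{1 + a^i \cdot m}\,m \;=\; y \;-\; \Bigl(a^i_m \cdot y + \tfrac{V_{\psi}(p^*) - a^i \cdot p^*}{1 + a^i \cdot m}\,k\Bigr)\,m,
\]
which is exactly the claimed formula for $(T^k_i)^{-1}(y)$. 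The uniqueness of the $x$ produced by this computation shows that $T^k_i$ is a bijection, and the explicit form exhibits its inverse.

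Finally, to complete the argument cleanly I will also check (or remark) that the candidate inverse, call it $S(y) := y - (a^i_m \cdot y + \tfrac{V_{\psi}(p^*) - a^i \cdot p^*}{1 + a^i \cdot m}k)\,m$, satisfies $T^k_i(S(y)) = y$; this is a one-line substitution using $a^i \cdot m / (1 + a^i\cdot m) = 1 - 1/(1 + a^i \cdot m)$, confirming that $S$ is a two-sided inverse of $T^k_i$. The only real obstacle here is bookkeeping with the constant term $\beta$; there is no conceptual difficulty, since the map is affine and rank-one in its $x$-dependent perturbation, so a single inner-product extraction inverts it explicitly.
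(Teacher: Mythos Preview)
Your proposal is correct and is exactly the direct calculation the paper alludes to; the paper itself gives no details beyond the phrase ``a direct calculation shows,'' and your write-up supplies precisely that computation, together with the observation that $1 + a^i\cdot m > 0$ (because $-m\in\intr(B)$) needed to make $a^i_m$ and the inverse formula well-defined.
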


\begin{lemma}\label{lem:proj_defined}
Let $(\overline{x}, \overline{x}_{n+1}), (\overline{y}, \overline{y}_{n+1})\in \Delta\cap {(S\times\Z)}$ and $i_x, i_y\in I$ be such that $(a^{i_x}, V_{\psi}(p^*)-a^{i_x}\cdot p^*)\cdot(\overline{x}, \overline{x}_{n+1})  = (a^{i_y}, V_{\psi}(p^*)-a^{i_y}\cdot p^*)\cdot(\overline{y}, \overline{y}_{n+1}) = 1$. 
Assume $(z, k_x)\in R_{\Delta}(\overline{x}, \overline{x}_{n+1})+(w_x, 0)$ and $(z, k_y)\in R_{\Delta}(\overline{y}, \overline{y}_{n+1})+(w_y, 0)$, where $w_x, w_y\in W_S$, $k_i\in \Z_+$, $k_x\leq \overline{x}_{n+1}$, and $k_y\leq \overline{y}_{n+1}$. 
Then $T^{k_x}_{i_x}(z-w_x, k_x)+w_x = T^{k_y}_{i_y}(z-w_y, k_y)+w_y.$
\end{lemma}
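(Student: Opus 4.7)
My plan is to unpack both sides using the definition $T^k_i(x) = x + (a^i,\, V_\psi(p^*) - a^i\cdot p^*)\cdot(x,k)\,m$ and then reduce the claim to a direct application of Proposition~\ref{prop:new_collision}.

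Expanding the two sides yields
\begin{align*}
T^{k_x}_{i_x}(z-w_x, k_x) + w_x &= z + \bigl(a^{i_x},\, V_\psi(p^*) - a^{i_x}\cdot p^*\bigr)\cdot (z-w_x, k_x)\,m,\\
T^{k_y}_{i_y}(z-w_y, k_y) + w_y &= z + \bigl(a^{i_y},\, V_\psi(p^*) - a^{i_y}\cdot p^*\bigr)\cdot (z-w_y, k_y)\,m,
\end{align*}
since the $-w_x$ (respectively $-w_y$) contributed by the translation inside $T$ cancels with the outer $+w_x$ (respectively $+w_y$). So, after canceling the common $z$, the lemma reduces to showing the equality of the scalar coefficients of $m$.

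Next, I would set $x := z - w_x$ and $y := z - w_y$ and check that the hypotheses of Proposition~\ref{prop:new_collision} are met. The containments $(z-w_x, k_x) \in R_{B(V_\psi(p^*), p^*)}(\overline{x}, \overline{x}_{n+1})$ and $(z-w_y, k_y) \in R_{B(V_\psi(p^*), p^*)}(\overline{y}, \overline{y}_{n+1})$ follow by subtracting $(w_x,0)$ and $(w_y,0)$ respectively from the two given containments; the integrality and bounds $0\le k_x\le \overline{x}_{n+1}$ and $0\le k_y\le \overline{y}_{n+1}$ are given in the statement; and the translation vector $x - y = w_y - w_x$ lies in $W_S$ because $W_S = W_S^+ \cap (-W_S^+)$ is a group (closed under addition and negation).

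Proposition~\ref{prop:new_collision} then gives exactly
$$\bigl(a^{i_x},\, V_\psi(p^*) - a^{i_x}\cdot p^*\bigr)\cdot (z-w_x, k_x) = \bigl(a^{i_y},\, V_\psi(p^*) - a^{i_y}\cdot p^*\bigr)\cdot (z-w_y, k_y),$$
which, combined with the expansion above, completes the proof. There is no real obstacle here; the lemma is essentially an algebraic repackaging of the Collision Proposition, and the only thing to be careful about is the cancellation of the $w_x, w_y$ terms and the verification that $W_S$ is symmetric so that $w_y - w_x \in W_S$.
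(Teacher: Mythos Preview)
Your proof is correct and follows essentially the same approach as the paper: expand via the definition of $T^k_i$, cancel the $w$-terms, and invoke Proposition~\ref{prop:new_collision} with $x=z-w_x$, $y=z-w_y$ (noting $x-y=w_y-w_x\in W_S$). In fact your write-up is slightly cleaner than the paper's, which records the inner products as $(a^{i_x},\dots)\cdot(z,k_x)$ rather than $(a^{i_x},\dots)\cdot(z-w_x,k_x)$; your version matches the definition of $T^k_i$ and the hypotheses of Proposition~\ref{prop:new_collision} exactly.
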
 

\begin{proof}
A direct calculation shows
\begin{align*}
 & ~T^{k_x}_{i_x}(z-w_x, k_x)+w_x &&\\
 =&~(z-w_x)+(a^{i_x}, V_{\psi}(p^*)-a^{i_x}\cdot p^*)\cdot(z,k_x)t+w_x && \text{by definition,}\\
=&~z+(a^{i_x}, V_{\psi}(p^*)-a^{i_x}\cdot p^*)\cdot(z,k_x)t&&\\
=&~z+(a^{i_y}, V_{\psi}(p^*)-a^{i_y}\cdot p^*)\cdot(z,k_y)t&&\text{by Proposition \ref{prop:new_collision},}\\
 =&~(z-w_y)+(a^{i_y}, V_{\psi}(p^*)-a^{i_y}\cdot p^*)\cdot(z,k_y)t+w_y && \text{by definition,}\\
 =&~T^{k_y}_{i_y}(z-w_y, k_y)+w_y&&\\
\end{align*}
\end{proof}

\begin{proposition}\label{prop:spindletospindle} Let $(\overline{x}, \overline{x}_{n+1})\in \Delta$. Consider $R_B(\overline{x}-\overline{x}_{n+1}p^*)+kp^*$ for $k\in \Z_+, k\leq x_{n+1}$. If $i_x\in I$ satisfies $(a^{i_x}, V_{\psi}(p^*)-a^{i_x}\cdot p^*)\cdot(\overline{x}, \overline{x}_{n+1}) = 1$, then 
\begin{equation*}
T^{k}_{i_x}\left(R_B(\overline{x}-\overline{x}_{n+1}p^*)+kp^*\right) = R_{B+t}(\overline{x}+t-\overline{x}_{n+1}\hat{p})+k\hat{p},
\end{equation*}
where $\hat{p}$ is defined in~\eqref{def:p_hat}.
\end{proposition}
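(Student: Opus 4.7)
The strategy is to describe each set by explicit linear inequalities and to verify that the affine map $T^k_{i_x}$ converts one description into the other. First, the hypothesis $(a^{i_x}, V_\psi(p^*) - a^{i_x}\cdot p^*)\cdot(\bar x, \bar x_{n+1}) = 1$ rearranges to $a^{i_x}\cdot(\bar x - \bar x_{n+1}p^*) = 1 - V_\psi(p^*)\bar x_{n+1}$, while for every other $i\in I$ the fact that $(\bar x, \bar x_{n+1})\in B(V_\psi(p^*),p^*)$ yields $a^i\cdot(\bar x - \bar x_{n+1}p^*) \leq 1 - V_\psi(p^*)\bar x_{n+1}$. Hence $i_x$ is a maximizer of $\psi_B$ at $r^* := \bar x - \bar x_{n+1}p^*$, and the spindle $R_B(r^*) + kp^*$ is the set
\[
\{r\in\R^n : (a^i-a^{i_x})\cdot(r-kp^*)\leq 0,\;(a^i-a^{i_x})\cdot(r^*-(r-kp^*))\leq 0,\;\forall i\in I\}.
\]

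For the right-hand side, since $0\in\intr(B+m)$ we have $1+a^i\cdot m > 0$ for every $i$, so $B+m = \{r : a^i_m\cdot r \leq 1,\,i\in I\}$ is a legitimate halfspace representation. A direct expansion shows $a^i\cdot(\bar x+m-\bar x_{n+1}\hat p) = a^i\cdot(\bar x - \bar x_{n+1}p^*) + (a^i\cdot m)(1-V_\psi(p^*)\bar x_{n+1})$, which equals $(1+a^i\cdot m)(1-V_\psi(p^*)\bar x_{n+1})$ when $i = i_x$ and is bounded above by this quantity for every $i\in I$. Dividing by $1+a^i\cdot m$ shows that $\psi_{B+m}$ is maximized at $\bar x + m - \bar x_{n+1}\hat p$ by $i_x$, so $R_{B+m}(\bar x+m-\bar x_{n+1}\hat p)+k\hat p$ admits the halfspace description obtained from the display above by replacing $a^i$ with $a^i_m$, replacing $r^*$ with $\bar x + m - \bar x_{n+1}\hat p$, and replacing $p^*$ with $\hat p$.

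The computational core is the algebraic identity
\[
(a^i_m - a^{i_x}_m)\cdot\bigl(w + (a^{i_x}\cdot w)\, m\bigr) \;=\; \frac{(a^i - a^{i_x})\cdot w}{1+a^i\cdot m}, \qquad w\in\R^n,\; i\in I,
\]
which follows by substituting $a^i_m = a^i/(1+a^i\cdot m)$ and simplifying. Because $1+a^i\cdot m > 0$, the sign of each side agrees. To apply this, I set $w := r - kp^*$ and observe from the definition that $T^k_{i_x}(r) - k\hat p = w + (a^{i_x}\cdot w) m$. A short parallel calculation using $a^{i_x}\cdot r^* = 1 - V_\psi(p^*)\bar x_{n+1}$ yields $\bar x + m - \bar x_{n+1}\hat p = r^* + (a^{i_x}\cdot r^*) m$, so
\[
\bar x + m - \bar x_{n+1}\hat p - (T^k_{i_x}(r) - k\hat p) \;=\; (r^*-w) + \bigl(a^{i_x}\cdot(r^*-w)\bigr) m.
\]
Applying the key identity once with argument $w$ and once with argument $r^*-w$ transforms the two inequality systems defining $R_{B+m}(\bar x+m-\bar x_{n+1}\hat p)+k\hat p$ evaluated at $T^k_{i_x}(r)$ into the two systems defining $R_B(r^*)+kp^*$ evaluated at $r$. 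Since $T^k_{i_x}$ is an invertible affine map by Proposition~\ref{prop:invertible}, the set equality follows. The main obstacle is the algebraic identity above; once isolated, everything else is direct substitution. A minor point is that $i_x$ may not be unique when $(\bar x,\bar x_{n+1})$ lies on several facets, but the spindle is independent of the choice of maximizing index, so the statement is well-posed for any valid $i_x$.
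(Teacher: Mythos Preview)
Your proof is correct. Both your argument and the paper's are direct computations verifying that the affine map $T^k_{i_x}$ sends one halfspace description of the spindle to the other. The difference is organizational: the paper lifts to $\R^{n+1}$ via Proposition~\ref{prop:projection}, works with the $(n{+}1)$-dimensional spindle $R_{B(V_\psi(p^*),p^*)}(\bar x,\bar x_{n+1})$, and then computes $(a^i_m,\frac{V_\psi(p^*)-a^i\cdot p^*}{1+a^i\cdot m})\cdot(T^k_{i_x}(y),k)$ directly, bounding it above by $a^{i_x}\cdot y + k(V_\psi(p^*)-a^{i_x}\cdot p^*)$ with equality at $i=i_x$. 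You stay in $\R^n$ throughout and instead isolate the single identity $(a^i_m - a^{i_x}_m)\cdot(w + (a^{i_x}\cdot w)m) = (a^i-a^{i_x})\cdot w/(1+a^i\cdot m)$, which cleanly does all the work once applied at $w=r-kp^*$ and $w=r^*-(r-kp^*)$. Your verification that $i_x$ is also the maximizing index for $\psi_{B+m}$ at $\bar x+m-\bar x_{n+1}\hat p$ and the recognition that $\bar x+m-\bar x_{n+1}\hat p = r^* + (a^{i_x}\cdot r^*)m$ are exactly the ingredients that make the second application of the identity go through. The two arguments are algebraically equivalent; yours is a bit more self-contained in that it does not rely on the $(n{+}1)$-dimensional lift, at the cost of having to check the maximizing-index claim for $B+m$ separately.
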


\begin{proof} Let $y\in R_B(\overline{x}-\overline{x}_{n+1}p^*)+kp^*$. Note that $(y,k)\in R_{\Delta}((\overline{x}, \overline{x}_{n+1}))$ by Proposition~\ref{prop:projection}. 
Also, $T^{k}_{i_x}(y)\in R_{B+t}(\overline{x}+t-\overline{x}_{n+1}\hat{p})+k\hat{p}$ if and only if $(T^{k}_{i_x}(y), k)\in R_{\Delta+(t,0)}((\overline{x}+t, \overline{x}_{n+1}))$.
We will show this latter sufficient condition. 

We first show that for $i\in I$, if $[(a^i, V_{\psi}(p^*)-a^i\cdot p^*)-(a^{i_x}, V_{\psi}(p^*)-a^{i_x}\cdot p^*)]\cdot(y, k)\leq 0$, then $(a^i_t, \frac{V_{\psi}(p^*)-a^i\cdot p^*}{1+a^i\cdot t})\cdot(T^{k}_{i_x}(y),k)\leq a^{i_x}\cdot y+k\left(V_{\psi}(p^*) - a^{i_x}\cdot p^*\right)$ with equality for $i=i_x$. 
Observe that
\begin{align*}
& \left(a^i_t, \frac{V_{\psi}(p^*)-a^i\cdot p^*}{1+a^i\cdot t}\right)\cdot(T^{k}_{i_x}(y),k)\\ = &~\left(\frac{a^i}{1+a^i\cdot t}, \frac{V_{\psi}(p^*)-a^i\cdot p^*}{1+a^i\cdot t}\right)\cdot\left(y+(a^{i_x}\cdot y+(V_{\psi}(p^*)-a^{i_x}\cdot p^*)k)t,k\right)\\
=&~\frac{a^i\cdot y+k\left(V_{\psi}(p^*)-a^i\cdot p^*\right)+(a^{i_x}\cdot y)(a^i\cdot t)+(a^i\cdot t)k\left(V_{\psi}(p^*)-a^{i_x}\cdot p^*\right)}{1+a^i\cdot t} \\
\leq &~ \frac{a^{i_x}\cdot y+k\left(V_{\psi}(p^*)-a^{i_x}\cdot p^*\right)+(a^{i_x}\cdot y)(a^i\cdot t)+(a^i\cdot t)k\left(V_{\psi}(p^*)+a^{i_x}\cdot p^*\right)}{1+a^i\cdot t} \\
= &~ \frac{(1+a^i\cdot t)\left(a^{i_x}\cdot y+k\left(V_{\psi}(p^*) - a^{i_x}\cdot p^*\right)\right)}{1+a^i\cdot t}&\\
= &  a^{i_x}\cdot y+k\left(V_{\psi}(p^*) - a^{i_x}\cdot p^*\right), 
\end{align*}
where the inequality holds because $[(a^i, V_{\psi}(p^*)-a^i\cdot p^*)-(a^{i_x}, V_{\psi}(p^*)-a^{i_x}\cdot p^*)]\cdot(y, k)\leq 0$. 
Equality holds if $i=i_x$. 

Similarly, for $i\in I$ such that $[(a^i, V_{\psi}(p^*)-a^i\cdot p^*)-(a^{i_x}, V_{\psi}(p^*)-a^{i_x}\cdot p^*)]\cdot(y, k)\leq 0$, it follows that $(a^i_t, \frac{V_{\psi}(p^*)-a^i\cdot p^*}{1+a^i\cdot t})\cdot(\overline{x}+t-T^{k}_{i_x}(y),\overline{x}_{n+1}-k)\leq 1-(a^{i_x}\cdot y+(V_{\psi}(p^*)-a^{i_x}\cdot p^*))k$ with equality for $i=i_x$.

Since $(y,k)\in R_{\Delta}((\overline{x}, \overline{x}_{n+1}))$, it follows that $[(a^i, V_{\psi}(p^*)-a^i\cdot p^*)-(a^{i_x}, V_{\psi}(p^*)-a^{i_x}\cdot p^*)]\cdot(y, k)\leq 0$ for each $i\in I$. 
Applying the arguments to each $i\in I$, with equality for $i=i_x$, we see that 
$$
\left[(a^i_t, \frac{V_{\psi}(p^*)-a^i\cdot p^*}{1+a^i\cdot t})-(a^{i_x}_t, \frac{V_{\psi}(p^*)-a^{i_x}\cdot p^*}{1+a^{i_x}\cdot t})\right]\cdot(T^{k}_{i_x}(y),k)\leq 0,
$$ 
and 
$$
\left[(a^i_t, \frac{V_{\psi}(p^*)-a^i\cdot p^*}{1+a^i\cdot t})-(a^{i_x}_t, \frac{V_{\psi}(p^*)-a^{i_x}\cdot p^*}{1+a^{i_x}\cdot t})\right]\cdot(\overline{x}_{n+1}+t-T^{k}_{i_x}(y), \overline{x}_{n+1}-k)\leq 0.
$$
Hence, $(T^{k}_{i_x}(y),k)\in R_{\Delta+(t,0)}((\overline{x}+t, \overline{x}_{n+1}))$, so 
$$
T^{k}_{i_x}\left(R_B(\overline{x}-\overline{x}_{n+1})+kp^*\right) \subseteq R_{B+t}(\overline{x}+t-\overline{x}_{n+1}\hat{p})+k\hat{p}.
$$ 
Using similar reasoning applied to $(T^{k}_{i_x})^{-1}$, we get the reverse inclusion. 
\end{proof}

\begin{proof}[Proof of Theorem~\ref{thm:fixing_covering_trans}]
Recall $\hat{p}$ in~\eqref{def:p_hat}. We show if $\mathcal X(B,p^*)+W_S=\R^n$, then $\mathcal X(B+t,\hat{p})+W_{S+t}=\R^n$. 
The converse is proved by switching the roles of $(B, p^*)$ and $(B+t, \hat{p})$. 

A direct calculation shows that $W_S = W_{S+t}$ (see Proposition 2.1 in~\cite{basu-paat-lifting}). 
If $B$ is a half-space, then the lifting region is equal to $\R^n$. 
The extended lifting region is contained in $\mathcal X(B, p^*)+W_S$, so $\mathcal X(B, p^*)+W_S= \mathcal X(B+t,\hat{p})+W_{S+t}=\R^n$. 
Thus, assume that $B$ is not a half-space. 

Define the map $A:\R^n\to \R^n$ by
$$
A(y) := T^{k}_{i_x}(y-u)+u,~~~\text{if }y\in R_B(w(z))+kp^*+u,
$$
where $z = (\overline{x}, \overline{x}_{n+1})$ is a blocking point of $\Delta$, $k\in \{0, \dotsc, \overline{x}_{n+1}\}, u\in W_S$, and $\left(a^{i_x}_t, V_{\psi}(p^*)-a^{i_x}\cdot p^*\right)\cdot (\overline{x}, \overline{x}_{n+1})=1$. 
Since $\mathcal X(B, p^*)+W_S = \R^n$, each $y$ is in some $R_B(\overline{x}-\overline{x}_{n+1}p^*)+kp^*+u$.
 $A$ is well defined from Lemma~\ref{lem:proj_defined}.

By assumption, $\R^n = \mathcal X(B, p^*)+W_S$. 
Using Proposition~\ref{prop:spindletospindle}, we have
\begin{align*}
A(\R^n) =& ~ A\left(\mathcal X(B, p^*)+W_S \right)\\
=& ~ A\bigg(\bigcup_{(\bar x, \bar x_{n+1}) \in \Delta\cap (S\times \Z_+), u\in W_S} \bigg(\bigcup_{i=0}^{\overline{x}_{n+1}} (R_B( \bar x - \bar x_{n+1}p^*) + ip^*+u)\bigg)\bigg)\\
=& ~  \bigcup_{(\bar x, \bar x_{n+1}) \in \Delta\cap (S\times \Z_+), u\in W_S} \bigg(\bigcup_{i=0}^{\overline{x}_{n+1}} A(R_B( \bar x - \bar x_{n+1}p^*) + ip^*+u)\bigg)\\
=&  ~\bigcup_{(\bar x, \bar x_{n+1}) \in \Delta\cap (S\times \Z_+), u\in W_S} \bigg(\bigcup_{i=0}^{\overline{x}_{n+1}}R_{B+t}(\overline{x}+t-\overline{x}_{n+1}\hat{p})+i\hat{p}+u\bigg)\\
=& ~ \bigg(\bigcup_{(\bar x, \bar x_{n+1}) \in \Delta\cap (S\times \Z_+)} \bigg(\bigcup_{i=0}^{\overline{x}_{n+1}} R_{B+t}(\overline{x}+t-\overline{x}_{n+1}\hat{p})+i\hat{p}\bigg)\bigg)+W_{S+t}\\
=&~ \mathcal X(B+m,\hat{p})+W_{S+t}.
\end{align*}

So, $A$ maps the translated fixing region to the translated fixing region. 

Suppose $A(y_1)=A(y_2)$ for some $y_1,y_2\in \R^n$. 
Let $\alpha:= A(y_1)=A(y_2)$. By definition, for $j=1,2$, there exists a blocking point $(\overline{x}^j, \overline{x}^j_{n+1})\in S\times \Z_+$, $k_j\in \Z_+$ with $k_j\leq \overline{x}^j_{n+1}$, and $w_j\in W_S$ such that $y_j\in R_B(\overline{x}^j - \overline{x}^j_{n+1}p^*)+k_jp^*+w_j$. 
Moreover 
$$
\alpha = A(y_1) = T^{k_1}_{i_{x_1}}(y_1-w_1)+w_1 = T^{k_2}_{i_{x_2}}(y_2-w_2)+w_2 = A(y_2).
$$ 
By Proposition~\ref{prop:spindletospindle}, $\alpha\in R_{B+t}(\overline{x}^j+t-\overline{x}^j_{n+1}\hat{p})+k_j\hat{p}+w_j$, for $j\in \{1,2\}$.
So, $(\alpha, k_j)\in R_{\Delta+(t,0)}((\overline{x}^j+t, \overline{x}^j_{n+1}))+(w_j,0)$, for $j\in \{1,2\}$. 
Lemma~\ref{lem:proj_defined} applied to $(T^{k_1}_{i_{x_1}})^{-1}$ and $(T^{k_2}_{i_{x_2}})^{-1}$ shows
$$ (T^{k_1}_{i_{x_1}})^{-1}\left(T^{k_1}_{i_{x_1}}(y_1-w_1)+w_1-w_1\right) +w_1=  (T^{k_2}_{i_{x_2}})^{-1}\left(T^{k_2}_{i_{x_2}}(y_2-w_2)+w_2-w_2\right)+w_2.$$
Applying the definition of $\left(T^{k_j}_{i_{x_j}}\right)^{-1}$ for $j=1,2$, we see $y_1=y_2$. 
Hence, $A$ is injective. 

By Lemma~\ref{lemma:patching-2} and Proposition~\ref{prop:finite_intersection}, $A$ is continuous. 
The Invariance of Domain Theorem (see ~\cite{brouwer1911beweis,Dold1995}) states that $A$ is an open map. So, the translated fixing region is open because $A$ maps $\R^n$ to the translated fixing region.  
By Proposition~\ref{prop:closed}, the translated fixing region is also closed.
Because the translated fixing region is nonempty, this implies that it must be $\R^n$. Thus, $B+t$ is one point fixable. 
\end{proof}

\section{Case Analysis for $K_i$ from Claim~\ref{eq:H_i-cases}}
\label{appendix:case_analysis}

\begin{proof}[Proof of Claim~\ref{eq:H_i-cases}]

To prove this claim, we first construct the half-space definition of the spindles $R_{T}(s^4-p^*), R_{T}(s^5-p^*),$ and $R_{T}(s^6-2p^*)$. Consider the vectors $q^1, q^2,$ and $q^3$ that define $T$, see~\eqref{eqTriQvals}. 
Since $(s^4,z^4)=(s^4,1)\in P$ is contained in the same facet as $(s^1, 0)$ (see the discussion following~\eqref{eq:P}), we see that
\begin{equation}\label{defn:R_t_4}
R_{T}(s^4-p^*)= \{x\in \R^2: (q^i-q^1)\cdot x\leq 0, (q^i-q^1)\cdot (s^4-p^*-x)\leq 0 ~ \forall~i\in \{2,3\}\}.
\end{equation}
Similarly, because $(s^5,1)$ and $(s^6,2)$ share a facet with $(s^2,0)$ and $(s^3,0)$, respectively,

\begin{align}
R_{T}(s^5-p^*)&=  \{x\in \R^2: (q^i-q^2)\cdot x\leq 0, (q^i-q^2)\cdot (s^5-p^*-x)\leq 0 ~\forall  i\in \{1,3\}\}\label{defn:R_t_5}\\
R_{T}(s^6-2p^*) & = \{x\in \R^2: (q^i-q^3)\cdot x\leq 0, (q^i-q^3)\cdot (s^6-2p^*-x)\leq 0 ~ \forall  i\in \{1,2\}\}\nonumber.
\end{align}

Consider the collection of points $K_1=\conv\{l, e^1, g, u\}$. In order to prove $K_1\subseteq R_{T}(s^4-p^*)$, is is enough to show that $\{l, e^1, g, u\}\subseteq R_{T}(s^4-p^*)$. Consider the point $l\in \{l, e^1, g, u\}$. Using the values in Figure~\ref{fig:holes} along with~\eqref{defn:R_t_4} and the definition $s^4=(1+b_1, 2+b_2)$, it is straight forward, yet tedious, to show that the four values $(q^i-q^1)\cdot l, (q^i-q^1)\cdot (s^4-p^*-l),~i\in \{2,3\}$ are all contained in 
\begin{equation*}
Q := \left\{\begin{array}{c}
0,~~
\frac{-1}{(1, \gamma_1)\cdot(1+b_1, b+b_2)},~~
\frac{-\gamma_1}{(1, \gamma_1)\cdot(1+b_1, b+b_2)},~~
\frac{-1+\gamma_2}{(-1, \gamma_2)\cdot(b_1, b_2)},\\\\
\frac{-2+\gamma_3}{(-2\gamma_3, -2)\cdot(b_1, b_2)},~~
\frac{\gamma_3}{(-2\gamma_3, 2)\cdot(b_1, b_2)},~~
\frac{-1+\gamma_3}{(\gamma_3, -1)\cdot(b_1, b_2)},\\\\
\frac{-1-\gamma_1}{(1, \gamma_1)\cdot(1+b_1, 1+b_2)},~~
\frac{-b_1(1+\gamma_1\gamma_3)-(1+\gamma_1)}{(1+b_1+\gamma_1(1+b_2))(-b_2+b_1\gamma_3)},~~
\frac{b_1(-1+\gamma_3)+b_2(-1+\gamma_2)+\gamma_2}{(b_1-(1+b_2)\gamma_2)(-b_2+b_1\gamma_3)},\\\\
\frac{-(b_1+1)+\gamma_1b_2-(\gamma_1+2\gamma_1^2\gamma_3)}{(1+b_1+\gamma_1(1+b_2))(-b_2+b_1\gamma_3)},~~
\frac{\gamma_2+b_1(-1+\gamma_2\gamma_3)}{(b_1-(1+b_2)\gamma_2)(-b_2+b_1\gamma_3)}
\end{array}\right\}.
\end{equation*}

Because $\gamma_1, \gamma_2, \gamma_3>0$, $\gamma_2, \gamma_3<1$, and $-1\leq b_2\leq b_1\leq 0$, a direct calculation shows that every value in $Q$ is nonpositive. Hence, from~\eqref{defn:R_t_4}, $l\in R_{T}(s^4-p^*)$. Similar arguments show that when the four inner products defining~\eqref{defn:R_t_4} are evaluated at any point in $ \{l, e^1, g, u\}$, the result is in $Q$. Hence $ \{l, e^1, g, u\}\subseteq  R_{T}(s^4-p^*)$.

The inclusions $K_2\subseteq R_T(s^5-p^*)+(1,1)$, $K_3 \subseteq R_{T}(s^4-p^*)+p^*$, $K_4 \subseteq R_{T}(s^5-p^*)+p^*$, and $K_5\subseteq R_{T}(s^6-2p^*)+p^*$ use similar proofs. So, we only prove $K_2\subseteq R_T(s^5-p^*)+(1,1)$.  For this, it is enough to show that $\{u-(1,1), m-(1,1), i-(1,1), g-(1,1)\}\subseteq R_T(s^5-p^*)$. However, substituting these four values in for $x$ in~\eqref{defn:R_t_5} yields values in $Q$. Hence, $\{u-(1,1), m-(1,1), i-(1,1), g-(1,1)\}\subseteq R_{T}(s^5-p^*)$. 
\end{proof}

\end{document}